\theoremstyle{plain}
\newtheorem{theorem}{Theorem}
\newtheorem{lemma}{Lemma}
\newtheorem{remark}{Remark}
\newtheorem{proposition}{Proposition}
\theoremstyle{definition}
\newcommand\numberthis{\addtocounter{equation}{1}\tag{\theequation}}
\DeclareMathOperator{\im}{im}
\DeclareMathOperator{\KS}{KS}
\DeclareMathOperator{\Arf}{Arf}
\DeclareMathOperator{\lk}{lk}
\DeclareMathOperator{\SO}{SO}
\DeclareMathOperator{\GL}{GL}
\DeclareMathOperator{\interior}{int}
\DeclareMathOperator{\Spin}{Spin}
\DeclareMathOperator{\Pin}{Pin}
\DeclareMathOperator{\Hom}{Hom}
\begin{document}

\title{A relative version of Rochlin's theorem}
\author{Michael R. Klug}
\maketitle

\begin{abstract}
	Rochlin proved that a closed 4-dimensional connected smooth oriented manifold $X^4$ with vanishing second Stiefel-Whitney class has signature $\sigma(X)$ divisible by 16.  This was generalized by Kervaire and Milnor to the statement that if $\xi \in H_2(X;\mathbb{Z})$ is an integral lift of an element in $H_2(X; \mathbb{Z}/2\mathbb{Z})$ that is dual to $w_2(X)$, and if $\xi$ can be represented by an embedded sphere in $X$, then the self-intersection number $\xi^2$ is divisible by 16.  This was subsequently generalized further by Rochlin  and various alternative proofs of this result where given by Freedman, Kirby, and Matsumoto.  

	We give further generalizations of this result concerning 4-manifolds with boundary.  Given a smooth compact orientable four manifold $X^4$ with integral homology sphere boundary and a connected orientable characteristic surface with connected boundary $F^2$ properly embedded in $X$, we prove a theorem relating the Arf invariant of $\partial F$, and the Arf invariant of $F$, and the Rochlin invariant of $\partial X$.  We then proceed to generalize this result to the case where $X$ is a topological compact orientable 4-manifold (which brings in the Kirby-Siebenmann invariant), $\partial F$ is not connected (which brings in the condition of being proper as a link), $F$ is not orientable (which brings in Brown invariants), and finally where $\partial X$ is an arbitrary 3-manifold (which brings in pin structures).  The final result gives a ``combinatorial'' description of the Kirby-Siebenmann invariant of a compact orientable 4-manifold with nonempty boundary.  
\end{abstract}

\section{Introduction}

The Arf invariant of a quadratic form is an algebraic invariant of the form; by using topological constructions of such quadratic forms, there are several different topological contexts in which the Arf invariant makes an appearance.  For example, in dimension two, we have the Arf invariant of a spin structure on a surface; in dimension three, the simplest example is the Arf invariant of a knot in $S^3$; in dimension four, there is an Arf invariant of a characteristic surface in a 4-manifold.  

This paper considers the relationship between different appearances of the Arf invariant in low-dimensional topology as well as other related invariants, namely the Rochlin invariant of an integral homology sphere and the Kirby-Sienbenmann invariant of a 4-manifold.  Thus this paper amounts to an exercise in ``Invariantology'' (the study of the relationships between different invariants).  

  The most basic setup that we work with is as follows:  We are given a smooth compact 4-manifold $X$ whose boundary $\partial X$ is an integral homology 3-sphere.  Furthermore, $F^2$ is a orientable characteristic surface with connected nonempty boundary properly embedded in $X$ (see section \ref{sec:lemmas} for discussion of characteristic surfaces).  Note that we have the Arf invariant of the knot $\partial F$ in $\partial X$, the  Arf invariant of the characteristic surface $F$, the Rochlin invariant of the boundary three manifold $\partial X$, and the signature of the 4-manifold $X$ (see section \ref{sec:lemmas} for discussion of these invariants).   Then our main result is that we have the relationship

	$$
	\Arf(F) + \Arf(\partial F) = \frac{\sigma(X) - [F]^2}{8} + \mu(\partial X)  \pmod 2
	$$
  (see Theorem \ref{main_thm}). 

This relationship generalizes a theorem of Rochlin concerning the signature of closed 4-manifolds and the proof of our result involves a reduction to a certain version of this theorem.  In addition to Rochlin's work, other related results have been obtained by Robertello, Ac\~una-Gonz\'alez, Gordon, Yasuhara, and Kirby.  

If $F$ is a disk and $\partial X = S^3$, then we recover Robertello's original definition of the Arf invariant \cite{robertello}.  If instead we take $X$ to be an even 4-manifold and $F$ to be the undisk (i.e., a disk bounding an unknot in $\partial X$ pushed into $X$), then we recover the definition of the Rochlin invariant $\mu$.  When we take $F$ to be a disk, then we recover a theorem of Gordon in \cite{gordon1975knots}.  When we restrict to $\partial X = S^3$, then we obtain Theorem 2.2 of \cite{yasuhara1996connecting}.  

 Ac\~una-Gonz\'alez gave a fundamental relationship between the Arf invariant of a knot and the Rochlin invariant of a homology sphere by showing that if $K$ is a knot in $S^3$, then the Rochlin invariant of $+1$-surgery on $K$ is equal to the Arf invariant of $K$ (and similarly for $-1$-surgery) \cite{gonzalez1970dehn}.  This result follows as a special case of Theorem \ref{main_thm} as we indicate at the end of section \ref{sec:main}.  

With this result in hand, some natural questions arise.  How does our result extend to topological manifolds? The answer, as we shall see, is that we must simply add the Kirby-Siebenmann invariant to either side of the equation.  What if we consider links instead of knots? In trying to generalize to this case, we will be lead to restricting to a special set of links called proper links.  What if we do not want to assume the boundary 3-manifold is an integral homology sphere but rather a general orientable 3-manifold? In this case, we are lead to a generalization of the Arf invariant known as the Brown invariant and we will need to consider pin structures (to warm up to this, we will first recast some of the earlier results in terms of spin structures).  

In section \ref{sec:lemmas}, we review the algebra underlying the Arf invariant and discuss its applications to invariants of knots in integral homology spheres and characteristic surfaces in 4-manifolds.  We state the version of Rochlin's result that we will later use, and prove a lemma that shows that two Arf invariants are equal.  We also review the definition of the Rochlin invariant of an integral homology sphere.  

In section \ref{sec:main}, we prove our main result and then, by considering several special cases, we recover several results from the literature.  In section \ref{sec:top}, we discuss how to extend these results to the topological category.  In section \ref{sec:links}, we dwell a bit on the algebra underlying the Arf invariant in order to justify the assumption that we only consider proper links (this condition will arise again when discussing spin and pin structures).  After this, we mention how to extend our results to the case of proper links.  In section \ref{sec:brown}, we introduce the Brown invariant of a link in an integral homology sphere, closely following the construction of this invariant in $S^3$ by Kirby and Melvin in \cite{kirby_melvin}, and then we prove a relative version of the nonorientable analogue of Rochlin's theorem of Guillou and Manin \cite{guillou_manin}.  In section \ref{sec:spin}, we recast some of our previous results in the language of spin structures and give alternative proofs of some of these results using this language as a warm-up to section \ref{sec:pin}.  In section \ref{sec:pin}, we recast our results concerning the Brown invariant in terms of work of Kirby and Taylor \cite{kirby1990pin}, in particular showing that the invariant introduced in section \ref{sec:brown} is in fact a special case of an invariant that appears in \cite{kirby1990pin}.  Then, using the language of pin structures, we are able to formulate and prove a generalization of all of the previous results where the 3-manifold boundary is no longer assumed to be an integral homology sphere.  

We summarize our basic proof strategy as ``cap off the 4-manifold and surface, use a result for closed manifolds, compute using additivity, and use a result relating a 4-dimensional invariant to a 3-dimensional invariant".  This strategy is used again and again in the proofs of Theorems \ref{main_thm}, \ref{main_thm_top}, the version of Theorem \ref{main_thm_top} for links in section \ref{sec:links}, Theorem \ref{brown_thm}, the alternative proof of Theorem \ref{main_thm} in section \ref{sec:spin}, the alternative proof of Theorem \ref{brown_thm} in section \ref{sec:pin}, and Theorem \ref{thm:pin}.   For each different proof, we need an appropriate result for closed manifolds (see Theorems \ref{matsumoto}, \ref{thm:closed_top}, \ref{gm}, \ref{kirby_rochlin}, and  \ref{final_closed}) as well as the appropriate result relating 4-dimensional and 3-dimensional invariants (see Lemmas \ref{3d-4d}, \ref{proper_relative_lemma}, \ref{brown_relative_lemma}, \ref{rob:arf}, \ref{lemma:final3d4d}, and \ref{lemma:pinnified}).  

As the paper progresses, we require less and less of our 4-manifolds $X$ and characteristic surfaces $F$, and this is reflected in the fact that our invariants become increasingly complicated.

\section{The Arf invariant}\label{sec:lemmas}

In this section, we briefly review the definitions of the Arf invariant of a quadratic form, the Arf invariant of a knot inside of a homology sphere, and the Arf invariant of a (not necessarily closed) characteristic surface in a 4-manifold with trivial integral first homology.  We then derive a relationship between these invariants that will be used to prove our result.  We also recall the definition of the Rochlin invariant of a homology sphere.  Manifolds will be connected unless stated otherwise.  

Let $V$ be a finite-dimensional $\mathbb{Z}/2\mathbb{Z}$-vector space.  A function $q : V \to \mathbb{Z}/2\mathbb{Z}$ is called a quadratic form if the function $I : V \otimes V \to \mathbb{Z}/2\mathbb{Z}$ defined by
$$
I(x,y) = q(x+y) + q(x) + q(y)
$$
is a bilinear form.  If this form is nondegenerate, then we say that $q$ is nondegenerate.  If $q$ is nondegenerate, then there exists a basis $a_1,b_1,...,a_n,b_n$ for $V$ so that $I(a_i,a_j) = I(b_i,b_j) = 0$ and $I(a_i,b_j) = \delta_{ij}$ for all $i,j$.  The Arf invariant of $q$, denoted $\Arf(q)$ is defined as 
$$
\Arf(q) = \sum_{i=1}^n q(a_i)q(b_i)
$$
and does not depend on the choice of such a basis.  Two such vector spaces $V,W$ with respective nondegenerate quadratic forms $q_V$ and $q_W$ are equivalent if there exists an isomorphism of vector spaces $\phi : V \to W$ such that $q_W \circ \phi = q_V$.  Arf proved that $(V,q_V)$ and $(W,q_W)$ are equivalent if and only if $\dim(V) = \dim(W)$ and $\Arf(q_V) = \Arf(q_W)$ \cite{arf1941}.  The Arf invariant is additive in the sense that if we have such a pair $(V,q)$ where $q$ is nondegenerate, and $V$ splits as an orthogonal direct sum 
$$
V = V_1 \oplus V_2
$$
then 
$$
\Arf(V,q) = \Arf(V_1, q|_{V_1}) + \Arf(V_2, q|_{V_2})
$$ 
(see for example, \cite{saveliev_book}).   

For any knot $K$ in an integer homology sphere $\Sigma$, there is a $\mathbb{Z}/2\mathbb{Z}$ valued cobordism invariant called the Arf invariant of $K$, denoted $\Arf(K)$.  Let $S$ be a Seifert surface for $K$.  Then we have a quadratic form $q_S : H_1(F; \mathbb{Z}/2\mathbb{Z}) \to \mathbb{Z}/2\mathbb{Z}$ defined by taking an element $x$ in $H_1(F;\mathbb{Z}/2\mathbb{Z})$, representing it by an embedded curve $C \subset F$, and then taking $q_F(x) = \lk(C, C^+) \pmod 2$ where $C^+$ is the result of pushing $C$ off of $F$ using chosen nonvanishing section of the normal bundle.  The resulting bilinear form 
$$
I_S : H_1(S;\mathbb{Z}/2\mathbb{Z}) \otimes H_1(S;\mathbb{Z}/2\mathbb{Z}) \to \mathbb{Z}/2\mathbb{Z}
$$
defined by 
$$
I_S(x,y) = q_S(x+y) + q_S(x) + q_S(y)
$$
is the intersection form on $S$.  The Arf invariant of $K$ is then defined as $\Arf(K) = \Arf(q_S)$.  The fact that the result does not depend on the choice of the Seifert surface $S$ follows from the fact that any two Seifert surfaces for a given knot in a homology sphere differ by a sequence of isotopies, additions of tubes, and deletions of tubes.  None of these operations change the Arf invariant of the surface.  The Arf invariant in $S^3$ was originally defined in a different way by Robertello \cite{robertello} where he also shows that the definition is equivalent to the one given here (see the discussion below Theorem \ref{main_thm}).  In \cite{robertello}, only knots in $S^3$ are considered; however, the relevant results also apply in integer homology spheres (see \cite{saveliev_book}).   

We now review the relevant 4-dimensional background.  Let $X^4$ be a smooth compact connected 4-manifold and let $F$ be an connected surface properly embedded in $X$ with connected boundary.  Then $F$ is said to be \emph{characteristic} if $[F] \cdot G = G \cdot G \pmod 2$ for all $G \in H_2(X; \mathbb{Z}/2\mathbb{Z})$ using the pairing $H_2(X, \partial X; \mathbb{Z}/2\mathbb{Z}) \times H_2(X; \mathbb{Z}/2\mathbb{Z}) \to \mathbb{Z}/2\mathbb{Z}$ coming from Lefschetz duality together with the usual pairing on homology, or equivalently if $[F, \partial F]$ is dual to $w_2(X)$ (i.e.,  $[F, \partial F]$ and $w_2(X)$ are identified when we first consider $[F, \partial F] \in H_2(X, \partial X; \mathbb{Z}/2\mathbb{Z})$ and then apply the Lefschetz duality isomorphism $H_2(X, \partial X; \mathbb{Z}/2\mathbb{Z}) \cong H^2(X; \mathbb{Z}/2\mathbb{Z})$).  

We also assume for now that $H_1(M; \mathbb{Z}) = 0$ and that $F$ is orientable -- the case where $F$ is not orientable will be treated in section \ref{sec:brown}. We now define a quadratic form
$$
q_F : H_1(F; \mathbb{Z}/2\mathbb{Z}) \to \mathbb{Z}/2\mathbb{Z}
$$
whose corresponding bilinear form 
$$
I_F(x,y) = q_F(x+y) - q_F(x) + q_F(y)
$$
is the intersection form on $H_1(F; \mathbb{Z}/2\mathbb{Z})$.  Since we will take the Arf invariant of $q_F$, we need to restrict to $F$ having a connected boundary, otherwise the intersection form is degenerate.  We revisit this point in section \ref{sec:links}.  

Given $x \in H_1(F; \mathbb{Z}/2\mathbb{Z})$, first pick an immersed curve $C \subset F$ representing $x$.  Since $H_1(X; \mathbb{Z}) = 0$, we can also pick an orientable surface $D$ bounding $C$ inside of $X$ that is transverse to $F$.  Note that the normal bundle of $D$ in $X$ is orientable and therefore trivial (since $D$ is homotopy equivalent to a wedge of circles) and, in fact, the trivialization that is induced on $C = \partial D$ by restricting a trivialization of the  normal bundle of $D$ to its boundary is unique.  This is because any two trivializations of the normal bundle of $\partial D$ differ by a map $\phi: \partial D \to \SO(2)$ and if these maps both come from trivializations of the normal bundle of $D$ then $\phi$ extends to a map $\Phi : D \to \SO(2)$.  But $\pi_1(\SO(2)) = H_1(\SO(2); \mathbb{Z})$ and therefore these two trivializations are the same.  The normal bundle of $C$ in $F$ then determines a 1-dimensional subbundle of this trivialized 2-dimensional bundle over $C$ and we let $\mathcal{O}(D)$ be the mod 2 number of twists in this 1-dimensional subbundle as we go around $C$.  Now we define 
$$
q(x) =  D \cdot F + \mathcal{O}(D) + d(C) \pmod 2
$$
where $d(C)$ is the number of double points of $C$ and $D \cdot F$ is the number of intersection points between $D$ and $F$.    To see the importance of $F$ being characteristic, imagine changing the above choice of $D$ by tubing into some closed surface.  

In \cite{matsumoto}, it is shown that $q_F$ is well defined, and further that if $[F] = [G]$ in $H_2(X, \partial X; \mathbb{Z})$ and $\partial F$ and $\partial G$ are concordant, then $\Arf(q_F) = \Arf(q_G)$. We denote this also by $\Arf(F) = \Arf(q_F)$.  In particular, when $F$ is closed, then for any other orientable surface $G$ homologous to $F$, $\Arf(q_F) = \Arf(q_G)$ and we denote the result simply by $\Arf([F])$ where $[F]$ is the corresponding homology class in $H_2(X; \mathbb{Z})$.  

Recall that for any symmetric unimodular bilinear form on a finitely generated free abelian group, there are characteristic elements and the square of any characteristic element is congruent to the signature of the form modulo 8 (see, for example, \cite{kirbybook}).  In particular, this applies to the intersection form on any closed orientable 4-manifold or any compact orientable 4-manifold with integral homology sphere boundary.  Rochlin proved the following result which we use in deriving our relative result \cite{rokhlin1972proof} (see also \cite{matsumoto}):

\begin{theorem}(Rochlin) \label{matsumoto}
	Let $X^4$ be a closed oriented smooth 4-manifold with $H_1(X; \mathbb{Z}) = 0$, and let $\xi$ be a characteristic homology class in $H_2(X; \mathbb{Z})$.  Then
	$$
	\Arf(\xi) = \frac{\sigma(X) - \xi \cdot \xi}{8} \pmod 2	
	$$
\end{theorem}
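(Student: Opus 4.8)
The plan is to represent $\xi$ by an embedded oriented surface, simplify that surface by ambient surgery while keeping both sides of the asserted identity under control, and then reduce the simplified case to Rochlin's original theorem that a closed spin $4$-manifold has signature divisible by $16$. To begin I would represent $\xi$ by a connected oriented surface $F$ embedded in $X$; since $[F] = \xi$ is characteristic, $F$ is a characteristic surface in the sense above, so $\Arf(\xi) = \Arf(q_F)$. The quoted fact that a characteristic element of a unimodular symmetric form has square congruent to the signature modulo $8$ already shows $\bigl(\sigma(X) - \xi\cdot\xi\bigr)/8 \in \mathbb{Z}$, and the content of the theorem is that its reduction modulo $2$ equals $\Arf(q_F)$.

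The key geometric tool is the ambient surgery move of Freedman and Kirby: if $C \subset F$ is an embedded simple closed curve whose class $x \in H_1(F;\mathbb{Z}/2\mathbb{Z})$ is nonzero and satisfies $q_F(x) = 0$, then there is an embedded disk $D$ in $X$ with $\partial D = C$, interior disjoint from $F$, and normal framing matching that of $C$ in $F$; surgering $F$ along $D$ produces a connected characteristic surface $F'$ with $[F'] = \xi$ of genus one less. Producing $D$ uses $H_1(X;\mathbb{Z}) = 0$ to get a spanning surface at all, uses $q_F(x) = 0$ as exactly the condition forcing the two framings to agree, and uses that $F$ is characteristic so that excess intersections of a spanning surface with $F$ can be removed in pairs by tubing into $F$ without changing the mod $2$ framing count -- this is the point of the remark above about tubing into a closed surface. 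The surgery changes neither $\sigma(X)$ nor $\xi\cdot\xi$, and, since by additivity of the Arf invariant it deletes a hyperbolic summand of $(H_1(F;\mathbb{Z}/2\mathbb{Z}),q_F)$ on which one generator has $q_F = 0$, it leaves $\Arf(q_{F'}) = \Arf(q_F)$. Iterating, and using Arf's classification of nondegenerate quadratic forms over $\mathbb{Z}/2\mathbb{Z}$, I reduce to two cases: $F \cong S^2$ (when $\Arf(q_F) = 0$), or $F$ a torus with $q_F \equiv 1$ on a symplectic basis of $H_1(F;\mathbb{Z}/2\mathbb{Z})$ (when $\Arf(q_F) = 1$).

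For the base case I would first tube $F$ with exceptional spheres in blow-ups $X \# \overline{\mathbb{CP}^2}$ or $X \# \mathbb{CP}^2$; this leaves the genus and $\Arf(q_F)$ unchanged while leaving $\sigma(X) - \xi\cdot\xi$ unchanged (both $\sigma(X)$ and $\xi\cdot\xi$ drop by $1$ at each step), so I may assume $\xi\cdot\xi = 0$, whence $F$ has a product normal neighborhood $F \times D^2 \subset X$. The complement $W := X \setminus \operatorname{int}(F \times D^2)$ is spin, since the only obstruction $w_2(X)$ was Poincar\'e dual to $\xi$ and so restricts to $0$ on $W$. In the sphere case I would cap $W$ off by $D^3 \times S^1$ (whose induced spin structure on $S^2 \times S^1$ matches the one on $\partial W$, the unique spin structure on $S^2$ being a bounding one), producing a closed spin $Y$ with $\sigma(Y) = \sigma(W) = \sigma(X)$, so $\sigma(X) \equiv 0 \pmod{16}$ by Rochlin's theorem; undoing the blow-ups gives $\sigma(X) \equiv \xi\cdot\xi \pmod{16}$, which is the assertion since $\Arf(q_F) = 0$. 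In the torus case I would cap $W$ off by any spin filling of $(T^2 \times S^1, s)$, where the induced spin structure $s$ is necessarily the one nonbounding in the $S^1$-direction and of Arf invariant $1$ on $T^2$; Novikov additivity and Rochlin's theorem then reduce the claim to the known fact that the Rochlin invariant of this spin $3$-manifold is $8$, supplying the extra summand.

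The step I expect to be the main obstacle is the ambient surgery move: arranging the spanning disk to be embedded, disjoint from $F$ in its interior, and with the correct normal framing requires a careful accounting of how tubing away self-intersections of an immersed disk, and its intersections with $F$, perturbs the framing, and this is exactly where characteristicity of $F$ and the vanishing $q_F(x) = 0$ enter; this is the technical core of the Freedman--Kirby argument. The base-case signature bookkeeping -- blow-ups, Novikov additivity, and the Rochlin invariant of $T^2 \times S^1$ -- is routine but sensitive to orientation and spin-structure conventions. Since the closed case is not original to this paper, one could alternatively simply cite Rochlin's proof, Matsumoto's, or Freedman--Kirby's; the above is the argument I would reconstruct, and it is also possible to run the whole reduction in the language of spin structures, identifying $\Arf(q_F)$ with the Arf invariant of the spin structure that $F$ inherits from $W$, which is closer to the approach taken later in the paper.
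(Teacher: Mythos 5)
The paper does not prove Theorem \ref{matsumoto}: it is quoted from Rochlin (and Matsumoto/Freedman--Kirby) and used as a black box, so the honest option you mention at the end --- citing it --- is exactly what the paper does. Your reconstruction, however, is not a correct proof, and the failure is at the step you yourself identify as the technical core.

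The ``ambient surgery move'' as you state it is false in the smooth category. The hypothesis $q_F(x)=0$ only says that for some immersed membrane $D$ the total count $D\cdot F+\mathcal{O}(D)+d(C)$ vanishes mod $2$; it gives no way to trade this for an \emph{embedded} disk with interior disjoint from $F$ and matching framing, and Freedman--Kirby prove no such compression lemma --- their argument deliberately keeps the membranes immersed and intersecting $F$, and extracts the formula by a cut-and-paste/bordism argument (equivalently: both sides of the identity are invariants of a characteristic bordism group, which one computes and checks on generators such as $(\mathbb{CP}^2,\mathbb{CP}^1)$ and a torus in $S^4$ with Arf invariant $1$). If your move were available, then iterating it would represent every characteristic class $\xi$ in a $4$-manifold with $H_1(X;\mathbb{Z})=0$ by a surface of genus at most $1$, since every nondegenerate quadratic form of rank at least $4$ contains a nonzero vector with $q=0$. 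This is contradicted by the Thom conjecture (Kronheimer--Mrowka): $\xi=5h\in H_2(\mathbb{CP}^2;\mathbb{Z})$ is characteristic with $\Arf(\xi)=1$, yet its minimal genus is $6$, not $1$. So the step is not merely delicate; it is unavailable, and no amount of care with framings will produce the disk. (Stabilizing by connected sums with $S^2\times S^2$ --- which preserves $\sigma$, $\xi\cdot\xi$, and $\Arf$ --- is the standard way to buy the room needed for such manoeuvres, but you do not invoke it, and even then one ends up running essentially the Freedman--Kirby bordism argument rather than a genus-lowering induction inside the original $X$.) Your base-case analysis --- blowing up to arrange $\xi\cdot\xi=0$, observing that the complement of a tubular neighborhood is spin, capping off, and applying the classical Rochlin theorem together with the value $8$ for the relevant spin structure on the $3$-torus --- is the standard Kervaire--Milnor computation and is fine; it is the reduction to that base case that collapses.
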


The condition that $H_1(X;\mathbb{Z}) = 0$ in Theorem \ref{matsumoto} can be defined away as follows.  We start with the setup of the closed case as in Theorem \ref{matsumoto}, but we do not insist that $H_1(X; \mathbb{Z}) = 0$.  Represent $\xi$ by an embedded orientable surface $F$.  Let $L$ be a 1-dimensional link embedded in $X - F$ so that the components of $L$ generate $H_1(X;\mathbb{Z})$.  By doing surgery on $L$, we obtain a new 4-manifold $X'$ and we can consider $F$ as being in $X'$ as well.  In this case, Theorem \ref{matsumoto} applies and says that 
	\begin{equation} \label{eqn}
	\Arf([F]) = \frac{\sigma(X') - [F]^2}{8} \pmod 2	
	\end{equation}
	Note that $\sigma(X') = \sigma(X)$ since $X$ and $X'$ are cobordant and $\xi \cdot \xi = [F]^2$ since they have isomorphic normal bundles.  We define $\Arf(\xi) := \Arf([F])$ for some choice of $F$ and $L$ where $\Arf([F])$ is considered as being in $X'$.  Note that the choice of $L$ does not matter as the left hand side of equation \ref{eqn} does not depend on $L$.  Also the choice of $F$ representing $\xi$ does not matter as given a different choice, say $F'$, we necessarily have $[F']^2 = [F]^2$ and we can choose $L$ to be disjoint from $F$ and $F'$.  Then using $F$ and $L$ we obtain 
	$$
	\Arf(\xi) = \frac{\sigma(X) - [F]^2}{8} \pmod 2	
	$$
and using $F'$ and $L$, we obtain 
	$$
	\Arf(\xi) = \frac{\sigma(X) - [F']^2}{8} \pmod 2	
	$$
	Therefore, $\Arf(\xi)$ is well defined.  In the case of Theorem \ref{main_thm} where $X$ has boundary, we can similarly define $\Arf(F)$ when $H_1(X;\mathbb{Z}) \neq 0$ by choosing an $L$ disjoint from $F$  and proceeding as above.  With these definitions in place, Theorem \ref{matsumoto} holds without the hypothesis that $H_1(X;\mathbb{Z}) = 0$ and it is in this form that we use the previous result.

Note that by the above mentioned results, if we are given a knot in a homology sphere and an orientable surface $F$ whose boundary is $K$ contained in $\Sigma \times I$, we have two $\mathbb{Z}/2\mathbb{Z}$-valued concordance invariants associated to $K$, $\Arf(K)$ and $\Arf(F)$.  Note that for any other such surface $F'$, we have $\Arf(F) = \Arf(F')$.  These invariants coincide:

\begin{lemma} \label{3d-4d}
Let $\Sigma^3$ be a integer homology sphere and let $F^2$ be a connected orientable properly embedded surface in $\Sigma \times I$ with connected boundary.  Then $\Arf( \partial F) = \Arf(F)$.
\end{lemma}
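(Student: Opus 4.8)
The plan is the ``cap off, reduce to a standard model, compute'' strategy. Since $\partial F$ is connected it lies in one component of $\partial(\Sigma\times I) = (\Sigma\times\{0\})\sqcup(\Sigma\times\{1\})$; relabelling if necessary, we may assume $\partial F = K$ lies in $\Sigma\times\{0\}$, which we identify with $\Sigma$, so that $\Arf(\partial F) = \Arf(K)$ is the invariant computed from a Seifert surface for $K$ in $\Sigma$.

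First I would cut down to one convenient surface. Since $\Sigma$ is a $\mathbb{Z}$-homology sphere, $H_2(\Sigma\times I,\partial(\Sigma\times I);\mathbb{Z})\cong H^2(\Sigma\times I;\mathbb{Z}) = 0$ and $H^2(\Sigma\times I;\mathbb{Z}/2\mathbb{Z}) = 0$; hence $w_2(\Sigma\times I) = 0$, so every properly embedded orientable surface in $\Sigma\times I$ is (vacuously) characteristic, any two with boundary $K$ are homologous rel boundary, and their boundaries, being equal, are concordant. By the invariance result of \cite{matsumoto} recalled above, $\Arf(F)$ therefore depends only on $K$, so it suffices to treat $F = S'$, where $S$ is a Seifert surface for $K$ in $\Sigma$ and $S'$ is obtained by pushing the interior of $S$ into $\Sigma\times I$ rel $\partial$; arrange that $S'$ agrees with a collar $K\times[0,1/2]$ near $\partial S'$ and with a parallel copy of $S$ at level $1/2$ elsewhere. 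Under the resulting identification $H_1(S';\mathbb{Z}/2\mathbb{Z})\cong H_1(S;\mathbb{Z}/2\mathbb{Z})$ I claim the quadratic form $q_{S'}$ of Section~\ref{sec:lemmas} coincides with the Seifert quadratic form $q_S$ used to define $\Arf(K)$; granting this, $\Arf(F) = \Arf(q_{S'}) = \Arf(q_S) = \Arf(K) = \Arf(\partial F)$.

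To prove $q_{S'} = q_S$, fix $x\in H_1(S;\mathbb{Z}/2\mathbb{Z})$ and an embedded curve $C\subset S$ representing it, taken in the interior of $S$ so that $C$ lies in the level-$1/2$ part of $S'$. By definition $q_S(x) = \lk_\Sigma(C,C^+)\bmod 2$, where $C^+$ is the pushoff of $C$ off $S$ in $\Sigma$. To evaluate $q_{S'}(x) = D\cdot S' + \mathcal{O}(D) + d(C)$ I would take $D$ to be the union of a surface $\Sigma_C$ bounding $C$ in $\Sigma = \Sigma\times\{0\}$ (which exists since $\Sigma$ is a homology sphere, and may be chosen disjoint from $K$ because $\lk_\Sigma(C,K) = \lk_\Sigma(C^+,K) = C^+\cdot S = 0$) with the vertical annulus $C\times[0,1/2]$ joining $C\subset\Sigma\times\{0\}$ to $C\subset S'$, smoothed at the corner. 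Then $D$ meets $S'$ only along $\partial D = C$ (the vertical annulus meets the collar of $S'$ only over $C\cap K = \varnothing$, and $\Sigma_C$ misses $K$), so $D\cdot S' = 0$, and $d(C) = 0$ since $C$ is embedded; hence $q_{S'}(x) = \mathcal{O}(D)\bmod 2$. It remains to compute $\mathcal{O}(D)$. Along $C = \partial D$ the normal $2$-plane bundle of $D$ in $\Sigma\times I$ is naturally identified with the normal $2$-plane bundle of $C$ in $\Sigma$, and its canonical trivialization is the one carried through the smoothed corner from the evident frame of the normal bundle of $\Sigma_C\times\{0\}$ in $\Sigma\times I$ (a normal of $\Sigma_C$ in $\Sigma$ together with the $I$-direction); tracing the smoothing shows this frame is the $0$-framing of $C$ in $\Sigma$ coming from the Seifert surface $\Sigma_C$. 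The distinguished line sub-bundle is the normal direction of $C$ in $S'$, which near $C$ is the normal direction of $C$ in $S$; since that direction and the normal direction of $S$ in $\Sigma$ differ by a constant rotation, the winding of this line against the $0$-framing is $\lk_\Sigma(C,C^+)$. Therefore $\mathcal{O}(D)\equiv\lk_\Sigma(C,C^+)\equiv q_S(x)\pmod 2$, so $q_{S'} = q_S$.

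I expect the main obstacle to be this last framing identification: one must track the canonical framing of the normal bundle of $D$ as $D$ bends at the corner where $\Sigma_C$ meets the vertical annulus, and check that the frame obtained over $C = \partial D$ is exactly the $0$-framing of $C$ determined by $\Sigma_C$ --- not that framing altered by a compensating twist. A secondary point, and the place where the integer-homology-sphere hypothesis is used, is the vanishing $\lk_\Sigma(C,K) = 0$, which lets $D$ be chosen disjoint from $S'$ and so removes the intersection term $D\cdot S'$ from the computation.
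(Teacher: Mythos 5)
Your proof is correct and follows essentially the same route as the paper's: reduce to a pushed-in Seifert surface $S'$ via the invariance of $\Arf$ under rel-boundary homology and boundary concordance, and then verify $q_{S'}=q_S$ on embedded curves by an explicit choice of membrane $D$. The only (harmless) difference is the choice of $D$: the paper uses a Seifert surface for the pushoff $(C')^+$ in a top time slice plus a horizontal annulus, so that $\mathcal{O}(D)=0$ and the linking number appears in the term $D\cdot S'$, whereas you use a Seifert surface for $C$ itself in the bottom slice plus a vertical annulus, so that $D\cdot S'=0$ and the linking number appears in $\mathcal{O}(D)$; your framing computation (complementary line subbundles of $\nu_{C\subset\Sigma}$ wind equally, and the bounding trivialization of $\nu_D$ restricts to the $0$-framing) is sound.
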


\begin{proof}
	Assume that we start with a Seifert surface $S$ for $\partial F$ in $\Sigma = \Sigma \times \{0\}$ and let $C$ be an embedded curve in $F$ that is nontrivial in $H_1(S; \mathbb{Z}/2\mathbb{Z})$.  Let $S'$ be the result of pushing $S$ into $\Sigma \times I$ so that it is properly embedded.  Then $\Arf(F) = \Arf(S')$ by our previous discussion.  Thus, it suffices to check that the 3-dimensional Arf invariant $\Arf(\partial F)$ is equal to the 4-dimensional $\Arf(S')$.  We do this by showing that the two quadratic forms $q_S$ and $q_{S'}$ agree.  We denote the push-in of the curve $C$ by $C'$.  Suppose that the particular push-in $S'$ has been chosen so that $C'$, together with a small annular neighborhood, is embedded in a particular time slice of $\Sigma \times \{t_0\}$ and that this is the time slice with the maximum time coordinate that intersects $S'$ (note that all push-ins of $S$ are isoptic relative to their boundaries and so we can pick whichever push-in is convenient). 

	Let $(C')^+$ be the knot in $\Sigma \times \{t_0\}$ obtained by pushing $C'$ off of the annular neighborhood in $\Sigma \times \{t_0\}$ and let $D$ be a Seifert surface for $(C')^+$ in $\Sigma \times \{t_0\}$.  Then, when computing the 3-dimensional quadratic form, we have $q_S(C) = \lk(C, C^+) = |C \cap D|$ (where here, we have projected $D$ from $\Sigma \times \{t_0\}$ to $\Sigma \times \{0\}$ and $C^+$ is $(C')^+$ projected from $\Sigma \times \{t_0\}$ to $\Sigma \times \{0\}$).  In computing the 4-dimensional quadratic form $q_{S'}(C')$, we use the surface $D$ together with the annulus between $C'$ and $(C')^+$, which we will simply refer to as $D$.  Since $C$ is embedded, we have $d(C) = 0$.  To see that $\mathcal{O}(D) = 0$, note that the framing of the normal bundle of $D$ is given by one vector field pointed entirely in the time direction, and one vector field entirely normal to $D$ in the time slice $\Sigma \times \{t_0\}$, which, along the annulus, agrees with a normal framing of $C$ within the annulus. Finally, after perturbing $S'$ slightly so that the only $C$ appears in $\Sigma \times \{t_0\}$ by pushing the rest of the annulus slightly up in the time direction, we see that $D \cdot S'$ is the number of intersections between $D$ and $C'$, which is just $\lk(C, C^+)$.  Therefore, $q_S(C) = q_{S'}(C')$, and since we can choose a symplectic basis for $H_1(S; \mathbb{Z}/2\mathbb{Z})$ to consist of such embedded curves, it follows that $\Arf(\partial F) = \Arf(F)$.  
\end{proof}

Finally, we mention the Rochlin invariant of a 3-dimensional integer homology sphere $\Sigma$, which is defined as
$$
\mu(\Sigma) = \frac{\sigma(W)}{8} \pmod 2 
$$
where $W$ is a smooth even compact 4-manifold with $H_1(W; \mathbb{Z}) = 0 $ that bounds $\Sigma$ (see, for example, \cite{saveliev_book}).  Note that we need to orient $W$ to define $\sigma(W)$; however, changing the orientation does not alter the result modulo 2.  For the moment, choose an orientation on $W$ and hence $\Sigma$. If $W'$ where another such oriented smooth even 4-manifold with $H_1(W';\mathbb{Z}) = 0$ with boundary $\overline{\Sigma}$, then we can form the closed oriented even smooth 4-manifold $W \cup_\Sigma W'$, and by applying Theorem \ref{matsumoto} with $\xi = 0$, we obtain
$$
0 = \frac{\sigma(W \cup_\Sigma W')}{8} = \frac{\sigma(W)}{8} + \frac{\sigma(W')}{8} \pmod 2
$$
by Novikov additivity \cite{atiyah1968index}, and therefore, $\mu(\Sigma)$ is well defined.  Note that $\mu$ is invariant under integral homology concordances of homology spheres.

\section{A relative version of Rochlin's theorem} \label{sec:main}

In this section, we put together the pieces from the last section, prove a relative version of Rochlin's theorem, and then discuss the relationship of this theorem to some of the known results in the literature.  The argument used to prove the following theorem will be repeated and referred to several times in the sections that follow, in which we will make minor adjustments to the proof in order to prove various generalizations.  

First we make some homological observations.  Let $X^4$ be a compact connected oriented 4-manifold whose boundary $\partial X$ is an integral homology sphere.  The relative long exact sequence for the boundary then gives the isomorphism $H_2(X; \mathbb{Z}) \cong H_2(X, \partial X; \mathbb{Z})$, and similarly in cohomology we have $H^2(X; \mathbb{Z}) \cong H^2(X, \partial X; \mathbb{Z})$.  We also have the isomorphism $H_2(X; \mathbb{Z}) \cong H^2(X, \partial X; \mathbb{Z})$.  By composing the isomorphisms $H^2(X, \partial; \mathbb{Z}) \cong H_2(X; \mathbb{Z}) \cong H^2(X, \partial X; \mathbb{Z})$ the pairing $H^2(X, \partial X; \mathbb{Z}) \times H_2(X, \partial X; \mathbb{Z}) \to \mathbb{Z}$ yields a nondegenerate symmetric pairing $H_2(X, \partial X; \mathbb{Z}) \times H_2(X, \partial X; \mathbb{Z}) \to \mathbb{Z}$ which is geometrically given by counting intersection points with sign.  For orientable $F$, the condition that $F$ is characteristic is equivalent to $[F]$ being characteristic with respect to this pairing.  When we write $[F]^2$ below, we are using this pairing.  Geometrically, this is obtained by pushing $F$ off of itself using the 0-framing on all of the components of $\partial F$ in $\partial X$, and extending this to a push-off of $F$ in $X$, and counting the signed number of intersections between $F$ and this push-off.   

\begin{theorem} \label{main_thm}
	Let $X^4$ be a smooth compact connected oriented 4-manifold with $\partial X$ an integer homology sphere.  Let $F^2$ be an orientable characteristic surface with connected boundary that is properly embedded in $X$.  Then
	$$
	\Arf(F) + \Arf(\partial F) = \frac{\sigma(X) - [F]^2}{8} + \mu(\partial X)  \pmod 2
	$$
\end{theorem}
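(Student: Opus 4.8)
\section*{Proof proposal}

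The plan is to carry out the paper's standard strategy: cap off $X$ and $F$ to get a closed pair, apply Rochlin's theorem (Theorem~\ref{matsumoto}) for closed manifolds, compute both sides using additivity, and convert the four-dimensional Arf invariant of the cap into the three-dimensional one via Lemma~\ref{3d-4d}. First I would dispose of the hypothesis $H_1(X;\mathbb{Z})=0$: choose a framed link in the interior of $X$ disjoint from $F$ whose components generate $H_1(X;\mathbb{Z})$ and surger it. This alters neither $\partial X$, $\partial F$, $\sigma(X)$, $[F]^2$, nor (by the definition recalled in the excerpt) $\Arf(F)$, so from now on I assume $H_1(X;\mathbb{Z})=0$.

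Since $\partial X$ is an integral homology sphere, it bounds a smooth compact spin $4$--manifold $W$ with $H_1(W;\mathbb{Z})=0$ and $\partial W=\overline{\partial X}$ (start from a spin filling, which exists as $\Omega_3^{\Spin}=0$, and surger out $\pi_1$); then $W$ is even and $\mu(\partial X)=\sigma(W)/8\pmod 2$. Set $Z=X\cup_{\partial X}W$, a closed oriented smooth $4$--manifold with $H_1(Z;\mathbb{Z})=0$ by Mayer--Vietoris. Pick a Seifert surface $S$ for the knot $\partial F\subset\partial X$, push it into a collar $\partial X\times I\subset W$ to obtain a properly embedded surface $S'\subset W$ with $\partial S'$ identified with $\partial F$, and let $\hat F=F\cup_{\partial F}S'\subset Z$, a closed connected orientable surface; put $\xi=[\hat F]\in H_2(Z;\mathbb{Z})$. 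Because $H_2(\partial X;\mathbb{Z}/2)=H_1(\partial X;\mathbb{Z}/2)=0$, every class in $H_2(Z;\mathbb{Z}/2)$ splits as a class supported in $X$ plus one supported in $W$; intersections of $\xi$ with the $X$--summand are controlled by $F$ being characteristic in $X$, while intersections with the $W$--summand vanish (push such a class off the collar containing $S'$) and $W$ is even, so $\xi$ is characteristic in $Z$ and Theorem~\ref{matsumoto} yields $\Arf(\xi)=(\sigma(Z)-\xi^2)/8\pmod 2$.

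Now I identify the two sides. For the right-hand side: $\sigma(Z)=\sigma(X)+\sigma(W)$ by Novikov additivity; $\xi^2=[F]^2$ because the $0$--framing of $\partial F$ used to define $[F]^2$ in $X$ coincides with the $0$--framing of $\partial S'$ used on the $W$ side (both are the Seifert framing of $\partial F$ in $\partial X$), so the push-offs glue, while $[S']^2=0$ since a parallel copy of $S'$ can be taken disjoint from it inside the collar; hence the right-hand side equals $(\sigma(X)-[F]^2)/8+\mu(\partial X)\pmod 2$. For the left-hand side: Mayer--Vietoris gives an orthogonal (for the intersection form) splitting $H_1(\hat F;\mathbb{Z}/2)=H_1(F;\mathbb{Z}/2)\oplus H_1(S';\mathbb{Z}/2)$, and choosing, for a given curve, the spanning surface $D$ entirely inside $X$ or inside the collar in $W$ (possible as $H_1(X;\mathbb{Z})=H_1(W;\mathbb{Z})=0$) shows that $q_{\hat F}$ restricts to $q_F$ and to $q_{S'}$ respectively; additivity of the Arf invariant then gives $\Arf(\xi)=\Arf(F)+\Arf(S')$. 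Since $S'$ lies in the collar $\partial X\times I$ and the relevant curves bound there, $\Arf(S')$ computed in $W$ equals $\Arf(S')$ computed in $\partial X\times I$, which by Lemma~\ref{3d-4d} is $\Arf(\partial F)$. Combining the two computations yields the stated formula.

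I expect the main obstacle to be the bookkeeping of the last paragraph: checking carefully that $q_{\hat F}$ really does restrict to $q_F$ on the $F$--part and $q_{S'}$ on the $S'$--part, which requires confining each spanning surface $D$ to one side of $Z$ and verifying that the twisting term $\mathcal{O}(D)$ and the double-point count are unchanged, together with the self-intersection identity $\xi^2=[F]^2$, where one must be certain the two $0$--framings agree across $\partial X$ and that a Seifert surface pushed into a collar has vanishing self-intersection. The remaining ingredients --- the existence of $W$, the homological splittings, Novikov additivity, and Lemma~\ref{3d-4d} --- are standard or already available.
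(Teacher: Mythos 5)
Your proposal is correct and follows essentially the same route as the paper's proof: cap off $X$ with an even $4$--manifold $W$ bounding $\overline{\partial X}$, close up $F$ with a pushed-in Seifert surface in a collar of $\partial W$, apply Theorem~\ref{matsumoto} to the closed pair, and use Novikov additivity, additivity of the Arf invariant, and Lemma~\ref{3d-4d} to identify the two sides. The extra details you supply (existence of the spin filling, the check that the capped surface is characteristic) are consistent with what the paper leaves implicit.
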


\begin{proof}
	Assume for now that $H_1(X;\mathbb{Z}) = 0$.  Let $W^4$ be a smooth even compact oriented 4-manifold with $H_1(W; \mathbb{Z}) = 0$ and $\partial W = \overline{\partial X}$.  Let $F_W$ be an orientable surface properly embedded $W$ inside a collar neighborhood $\partial X \times I$ with $\partial F_W = \partial F$.  Since $W$ is even, the closed $F \cup F_W$ is characteristic in the closed 4-manifold $X \cup W$.  See Figure \ref{fig:slice_schematic} for a schematic.  Note that by Novikov additivity \cite{atiyah1968index}, $\sigma(X \cup W) = \sigma(X) + \sigma(W)$.  

	Since $\Sigma$ is a homology sphere, $\partial F$ bounds an oriented surface in $\partial X$.  Pick a bicollared neighborhood $\partial X \times [-1,1]$ embedded in $X \cup W$ with $\partial X \times \{0\}$ mapped to $\partial X$ via the identity, $\partial X \times [-1,0]$ contained in $X$, and $\partial X \times [0,1]$ contained in $W$.  Further, assume that we have isotoped $F$ and $F_W$ relative to their boundaries so that inside of  $\partial X \times [-1,0]$ and $\partial X \times [0,1]$, they intersect each slice $\partial X \times \{t\}$ in just a copy of $\partial F$.  Then let $\overline{F}$ be the part of $F$ outside of the collar, together with a copy of a Seifert surface for the $F \cap (\partial X \times \{-1\})$ pushed in to $\partial X \times \{[-1,0)\}$.  Similarly, let $\overline{F_W}$ be the part of $F_W$ outside of the collar, together with a copy of a Seifert surface for the $F \cap (\partial X \times \{1\})$ pushed in to $\partial X \times \{(0,1]\}$. For the homology class of $F \cup F_W$ in $H_2(X \cup W; \mathbb{Z})$ (where now an orientation has been chosen) we have 
	$$
	[F \cup F_W] = [\overline{F}] + [\overline{F_W}] = [\overline{F}]
	$$ 
	and 
	$$
	[\overline{F}] \cdot [\overline{F_W}] =0
	$$  
	Therefore, we have 
	$$
	[F \cup F_W]^2 = [F]^2
	$$  

	Finally, $\Arf(F \cup F_W)$ can also be computed in two parts.  Namely, since both $X$ and $W$ have vanishing first integral homology, the quadratic space $(H_1(F \cup F_W; \mathbb{Z}/2\mathbb{Z}), q_{F \cup F_W})$ splits as an orthogonal direct sum 
	$$
	(H_1(F \cup F_W; \mathbb{Z}/2\mathbb{Z}), q_{F \cup F_W}) = (H_1(F; \mathbb{Z}/2\mathbb{Z}), q_F) \oplus (H_1(F_W; \mathbb{Z}/2\mathbb{Z}), q_{F_W})
	$$
and therefore, by the additivity of the Arf invariant, we have 
	$$
	\Arf(F \cup F_W) = \Arf(F) + \Arf(F_W) = \Arf(F) + \Arf(\partial F)
	$$
where the last equality follows from Lemma \ref{3d-4d} since $F_W$ is contained in a collar neighborhood of the boundary in $W$.  

Putting these additivity results together and applying Theorem \ref{matsumoto}, we obtain
$$
	\Arf(F) + \Arf(\partial F) = \Arf(F) = \frac{\sigma(X \cup W) - [F \cup F_W]^2}{8}=  \frac{\sigma(X) - [F]^2}{8} + \mu(\partial X)  \pmod 2
$$
as desired. 

	The case where $H_1(X;\mathbb{Z}) \neq 0$ is obtained by doing surgery on a link generating $H_1(X;\mathbb{Z})$ in the complement of $F$ and applying the case where $H_1(X;\mathbb{Z}) = 0$. 

\end{proof}

\begin{figure}  	
	\centering
	\includegraphics[width=5cm]{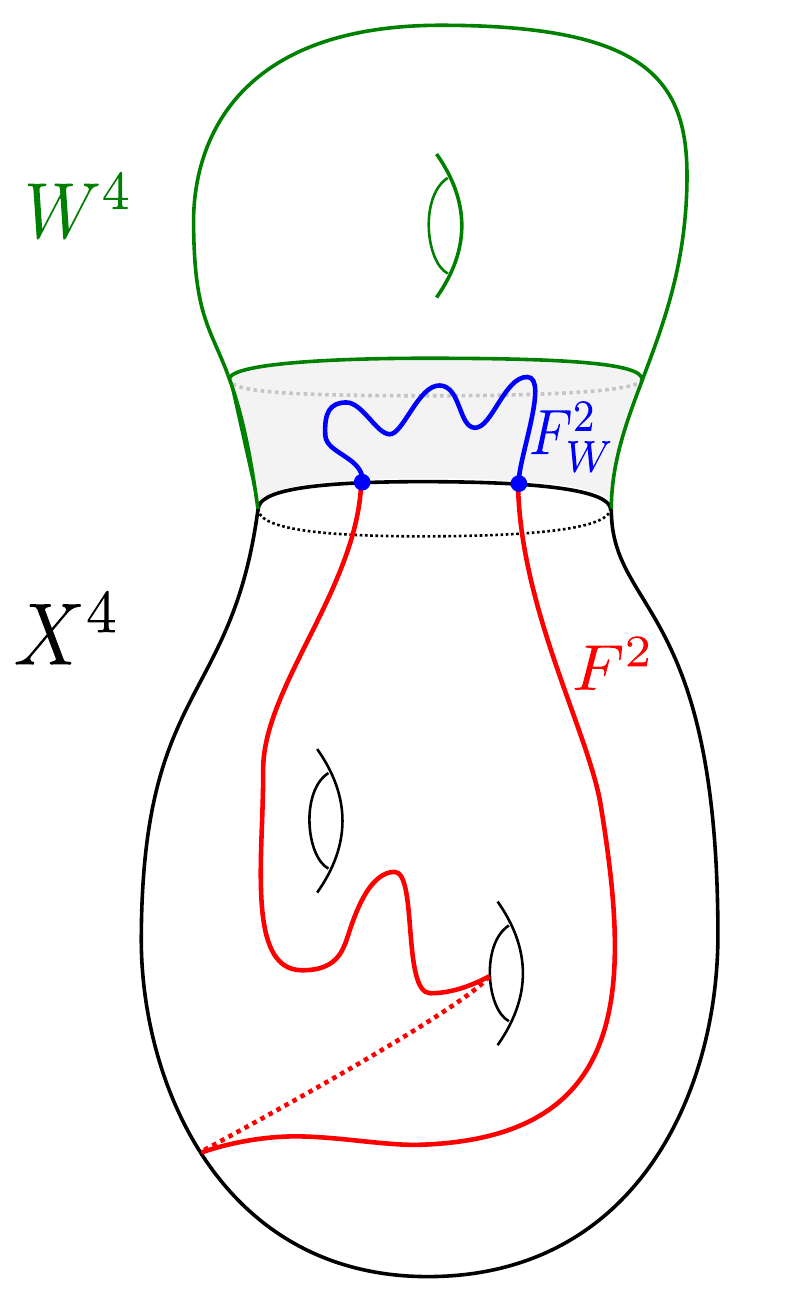}
  	\caption{This figure is a schematic for our situation where $X^4$ is the 4-manifold containing the characteristic surface $F^2$, $W^4$ is the manifold that also bounds $\partial X$, which we have glued on to obtain a closed 4-manfiold, and $F^2_W$ is a surface in a collar neighborhood of $\partial W$ with $\partial F_W = \partial F$.} \label{fig:slice_schematic}
\end{figure}

There are other results relating the Rochlin invariant and the Arf invariant.  Ac\~una-Gonz\'alez proved that, given a knot $K \subset S^3$ and $\alpha \in \mathbb{Z}$ with $S^3(K; \alpha)$ equal to the integral homology sphere that results from $(1/\alpha)$-surgery on $K$, we have $\mu(S^3(K; \alpha)) = \alpha \cdot \Arf(K)$ \cite{gonzalez1970dehn}.  We recover this result when $\alpha = \pm 1$ from Theorem \ref{main_thm} as follows.  Consider a copy of the mirror image of $K$ contained in a 3-ball in $S^3$ that is disjoint from $K$. We denote the knot by $K^m$.  Then $\Arf(K^m) = \Arf(K)$ and there exists an annulus properly embedded in $B^4$ that cobounds $K$ and $K^m$.  Now attach a 2-handle to $B^4$ with framing $\alpha$, denote the resulting 4-manifold be $X$, and let $F$ denote the aforementioned annulus together with the core of the 2-handle.  Note that $F$ is characteristic in $X$ and that the Arf invariant of $K^m$ considered in $\partial X$ is the same as the Arf invariant of $K^m$ considered in $S^3$, since this can be computed from a Seifert surface and the whole computation can be assumed to happen within the 3-ball containing $K^m$.  Then, applying Theorem \ref{main_thm}, we see that $[F]^2 = \sigma(X)$ and $\Arf(F) = 0$, thus rederiving Ac\~una-Gonz\'alez's result in this case.  

Gordon has generalized this result of Ac\~una-Gonz\'alez to the case where two solid torus exteriors in integer homology spheres are glued together to obtain a $\mathbb{Z}/2\mathbb{Z}$-homology sphere \cite{gordon1975knots}.  In particular, Gordon derived a formula for the Rochlin invariant of such a homology sphere in terms of the gluing map, the Arf invariants of the cores of the removed solid tori, and the Rochlin invariants of the two homology spheres \cite{gordon1975knots}.

\section{A relative version of Rochlin's theorem for topological manifolds}\label{sec:top}

In this section, we discuss how to extend the results in the previous sections to the topological category.  Let $\KS(X)$ be the Kirby-Siebenmann invariant of a connected topological 4-manifold \cite{kirby1977foundational}.  The starting point for this discussion is a theorem of Freedman and Kirby (as stated in \cite{universal_form}, although we drop the hypothesis that the manifold be simply-connected as addressed below the theorem statement). This result generalizes Theorem \ref{matsumoto} to the topological category:

\begin{theorem}(Freedman and Kirby)\label{thm:closed_top}
	Let $X^4$ be a closed oriented topological 4-manifold, and let $\xi$ be a characteristic homology class in $H_2(X; \mathbb{Z})$.  Then
	$$
	\Arf(\xi) = \KS(X) + \frac{\sigma(X) - \xi \cdot \xi}{8} \pmod 2	
	$$
\end{theorem}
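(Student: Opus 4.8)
The plan is to deduce the topological formula from the smooth one, Theorem~\ref{matsumoto}, by isolating the Kirby--Siebenmann defect, showing it is additive under connected sum, and pinning it down on a single non-smoothable building block.

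Two preliminaries come first. For $\Arf(\xi)$ to make sense when $X$ is only a topological manifold, I would use Quinn's theorem that a connected topological $4$-manifold is smooth in the complement of a point $p$: surfaces representing $\xi$, immersed curves on them, and the auxiliary bounding surfaces $D$ used to define $q_F$ can all be kept off the codimension-$4$ point $p$, so $q_F$ and $\Arf(\xi)$ are defined exactly as in the smooth case; they are independent of $p$ (the smooth structures on $X\setminus\{p\}$ and $X\setminus\{p'\}$ agree on the connected set $X\setminus\{p,p'\}$) and independent of the chosen surface by \cite{matsumoto}, since $H_2(X\setminus\{p\};\mathbb{Z})\cong H_2(X;\mathbb{Z})$. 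Second, I would reduce to $H_1(X;\mathbb{Z})=0$ exactly as after Theorem~\ref{matsumoto}: represent $\xi$ by a surface $F$ and surger a framed link generating $H_1(X;\mathbb{Z})$ lying in $X\setminus F$. Choosing $p$ off the link and off $F$, this surgery is a smooth operation on the smooth manifold $X\setminus\{p\}$, so $\sigma(X)$, $\xi\cdot\xi$, $\Arf(\xi)$ and the germ at $p$ carrying $\KS$ are all unchanged; hence it suffices to prove the formula when $H_1(X;\mathbb{Z})=0$.

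Now set $\delta(X,\xi):=\Arf(\xi)+\frac{\sigma(X)-\xi\cdot\xi}{8}\pmod 2$; the claim is $\delta(X,\xi)=\KS(X)$. Both sides are additive under connected sum: writing $H_2(X_1\#X_2;\mathbb{Z})=H_2(X_1;\mathbb{Z})\oplus H_2(X_2;\mathbb{Z})$ with $\xi=\xi_1\oplus\xi_2$, the signature and self-intersection add, $\KS$ adds, and representing $\xi$ by a tubing $F_1\#F_2$ of surfaces for $\xi_1,\xi_2$ one may take all the bounding surfaces $D$ in the appropriate summand, so $(H_1(F_1\#F_2;\mathbb{Z}/2\mathbb{Z}),q)$ splits orthogonally and $\Arf$ adds. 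By Theorem~\ref{matsumoto}, $\delta(X,\xi)=0=\KS(X)$ whenever $X$ is smooth; in particular, taking $\xi_2=0$ (characteristic, since $S^2\times S^2$ is even), both $\delta$ and $\KS$ are unchanged under connected sum with $S^2\times S^2$. I would then invoke the stable smoothing theorem of Freedman and Quinn: a closed topological $4$-manifold $M$ admits a smooth structure after connected sum with some number of copies of $S^2\times S^2$ if and only if $\KS(M)=0$. If $\KS(X)=0$, then $X\#r(S^2\times S^2)$ is smooth for some $r$, so $\delta(X,\xi)=\delta\big(X\#r(S^2\times S^2),\xi\oplus0\big)=0=\KS(X)$. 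If $\KS(X)=1$, let $\mathcal{E}$ be Freedman's $E_8$-manifold (closed, oriented, $H_1=0$, even intersection form $E_8$, signature $8$); since $\mathcal{E}\#r(S^2\times S^2)$ is even with signature $8\not\equiv0\pmod{16}$, Theorem~\ref{matsumoto} with $\xi=0$ shows it is never smooth, so by the same criterion $\KS(\mathcal{E})=1$ and $\KS(X\#\mathcal{E})=0$. The previous case then gives $0=\delta(X\#\mathcal{E},\xi\oplus0)=\delta(X,\xi)+\delta(\mathcal{E},0)$, and $\delta(\mathcal{E},0)=\Arf(0)+\frac{8-0}{8}=1$, hence $\delta(X,\xi)=1=\KS(X)$.

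The one genuinely hard, non-elementary ingredient is the Freedman--Quinn stable smoothing theorem; everything else is bookkeeping with additivity of the Arf invariant together with the already-available smooth Rochlin theorem, plus the routine but necessary checks that $\delta$ is well defined in the topological category and is unaffected by the $H_1$-killing surgeries and by $S^2\times S^2$-stabilisation. If instead one is content to quote the simply-connected case directly from \cite{universal_form}, the only remaining work is the reduction from general $\pi_1$ to $H_1(X;\mathbb{Z})=0$ and the verification that $\KS$ survives it.
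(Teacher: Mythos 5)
Your argument is correct in outline, but it takes a genuinely different route from the paper: the paper does not reprove this theorem at all. It quotes the simply-connected statement from \cite{universal_form} (noting only that the invariant $\tau_1$ there coincides with $\Arf$, and that the result follows from Theorem \ref{matsumoto} together with Freedman's equivalence of criteria (iv) and (v) in Theorem 3 of \cite{universal_form}), and then supplies just the reduction from arbitrary $\pi_1$ to $H_1(X;\mathbb{Z})=0$ via surgery on a link, using that $\KS$ is unchanged by that operation. You instead give a self-contained derivation from the smooth case: isolate the defect $\delta(X,\xi)=\Arf(\xi)+\tfrac{\sigma(X)-\xi\cdot\xi}{8}$, prove additivity of both $\delta$ and $\KS$ under connected sum, kill $\delta$ on smooth manifolds by Theorem \ref{matsumoto}, and then use the Freedman--Quinn stable smoothing criterion plus the computation $\delta(\mathcal{E},0)=1$ on the $E_8$-manifold to identify $\delta$ with $\KS$ in both cases. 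This buys transparency -- it makes visible exactly which deep topological inputs (Quinn's smoothing of punctured $4$-manifolds, stable smoothability $\Leftrightarrow \KS=0$, existence of $\mathcal{E}$) are consumed, and it is essentially the standard modern proof of the Freedman--Kirby formula -- at the cost of redoing work the paper is content to cite.

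One soft spot to tighten: your claim that the smooth structures on $X\setminus\{p\}$ and $X\setminus\{p'\}$ ``agree'' on $X\setminus\{p,p'\}$ is not literally what Quinn gives you (existence, not uniqueness); smoothings of a noncompact $4$-manifold are classified up to isotopy by $H^3(\,\cdot\,;\mathbb{Z}/2\mathbb{Z})\cong H_1(X;\mathbb{Z}/2\mathbb{Z})$, which need not vanish before you have done the $H_1$-killing surgery. Since $\mathcal{O}(D)$ a priori depends on the smooth structure, you should either perform the surgery first and then invoke uniqueness of the smoothing of the punctured simply-$H_1$ manifold, or argue directly that $q_F$ depends only on data in a neighborhood of $F\cup D$ where any two smoothings are concordant; either patch is routine but should be said.
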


Note that in \cite{universal_form}, the invariant $\tau = \tau_1$ is used in place of $\Arf$, however, these two invariants are equal (see Lemma 6 in \cite{universal_form}).  This theorem follows from Theorem \ref{matsumoto} together with the topological result of Freedman given by the equivalent criteria (iv) and (v) in Theorem 3 of \cite{universal_form}.  

By using the definition of $\Arf(\xi)$ for non-simply-connected manifolds, as mentioned in Section \ref{sec:lemmas}, together with the fact that $KS(M) = KS(N)$ if $M$ and $N$ are concordant (see, for example, Theorem 8.2 of \cite{2020survey}), Theorem \ref{thm:closed_top} with no hypothesis on $\pi_1(X)$ follows from the version of Theorem \ref{thm:closed_top} stated in \cite{universal_form} (which assumes $X$ is simply-connected) in the exact same way that we generalized Theorem \ref{matsumoto} with no condition on $H_1(X;\mathbb{Z})$ in Section \ref{sec:lemmas}.

\begin{remark}
The invariants $\tau$ and $\Arf$ are both obstructions to representing a homology class by an embedded sphere.  From the perspective of $\tau$, we represent a homology class by an immersed sphere and then try and eliminate the double points via Whitney moves and $\tau$ is an obstruction to being able to do this.  From the perspective of $\Arf$, we start with a higher genus surface that represents a given homology class and we try and find disks to do compressions that lower the genus of the surface and $\Arf$ is an obstruction to doing this. 
\end{remark}

We will make use of two properties of $\KS$ (see, for example, Theorem 8.2 of \cite{2020survey}).  First, if $X^4$ and $W^4$ are two compact 4-manifolds with $\partial X$ homeomorphic to $\partial W$, then $\KS(X \cup_\phi W) = \KS(X) + \KS(W)$ where $\phi : \partial X \to \partial W$ is any homeomorphism.  Second, if $W$ is a smoothable compact 4-manifold, then $\KS(W) = 0$.  Using Theorem \ref{thm:closed_top} together with the aforementioned facts about $\KS$ applied to $W$ as in the proof of Theorem \ref{main_thm}, the proof of Theorem \ref{main_thm} extends to the topological category and yields:

\begin{theorem} \label{main_thm_top}
Let $X^4$ be a compact oriented topological 4-manifold with $\partial X$ an integer homology sphere.  Let $F^2$ be an orientable characteristic surface with connected boundary that is properly embedded in $X$.  Then
	$$
	\Arf(F) + \Arf(\partial F) =  \frac{\sigma(X) - [F]^2}{8} + \mu(\partial X) + \KS(X) \pmod 2
	$$
\end{theorem}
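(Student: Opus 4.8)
The plan is to follow the same ``cap off, use a closed result, compute by additivity, relate 4d to 3d'' strategy that proved Theorem \ref{main_thm}, replacing each ingredient by its topological counterpart. As in the smooth case, I would first reduce to the situation $H_1(X;\mathbb{Z}) = 0$ by surgering out a link in the complement of $F$ that generates $H_1(X;\mathbb{Z})$; since $\KS$ is a concordance (indeed cobordism) invariant and surgery on such a link does not change the relevant quantities, the general case follows from this one exactly as before.

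Assuming $H_1(X;\mathbb{Z}) = 0$, I would choose, just as in the proof of Theorem \ref{main_thm}, a \emph{smooth} even compact oriented 4-manifold $W$ with $H_1(W;\mathbb{Z}) = 0$ and $\partial W = \overline{\partial X}$ (such $W$ exists because $\partial X$ is an integral homology sphere and hence bounds a smooth even 4-manifold), together with a properly embedded orientable surface $F_W$ in a collar $\partial X \times I \subset W$ with $\partial F_W = \partial F$. Then $F \cup F_W$ is a characteristic surface in the closed topological 4-manifold $X \cup W$. The three additivity computations carry over verbatim: Novikov additivity gives $\sigma(X \cup W) = \sigma(X) + \sigma(W)$; the homological argument gives $[F \cup F_W]^2 = [F]^2$; and the splitting of the quadratic form together with Lemma \ref{3d-4d} (applied to $F_W$ inside the smooth collar of $W$) gives $\Arf(F \cup F_W) = \Arf(F) + \Arf(\partial F)$. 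The only genuinely new inputs are: (i) the closed topological Rochlin theorem, Theorem \ref{thm:closed_top}, in place of Theorem \ref{matsumoto}, which introduces the term $\KS(X \cup W)$; and (ii) the additivity of the Kirby--Siebenmann invariant under gluing along boundaries, $\KS(X \cup W) = \KS(X) + \KS(W)$, together with the vanishing $\KS(W) = 0$ for the smoothable $W$. Combining, $\KS(X \cup W) = \KS(X)$, and applying Theorem \ref{thm:closed_top} to $X \cup W$ with $\xi = [F \cup F_W]$ yields
$$
\Arf(F) + \Arf(\partial F) = \KS(X) + \frac{\sigma(X) - [F]^2}{8} + \frac{\sigma(W)}{8} = \frac{\sigma(X) - [F]^2}{8} + \mu(\partial X) + \KS(X) \pmod 2,
$$
using $\mu(\partial X) = \sigma(W)/8 \pmod 2$ by definition of the Rochlin invariant.

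There is no serious obstacle here; the proof is a routine transcription. The one point requiring a moment's care is making sure Lemma \ref{3d-4d} still applies: it is stated and proved for smooth manifolds, but we only ever invoke it for $F_W$ sitting inside $\partial X \times I \subset W$, and $W$ was chosen to be smooth, so the lemma applies without change. Likewise one should note that $q_{F\cup F_W}$ and its Arf invariant are defined for topological $X \cup W$ exactly as in the smooth case (the relevant transversality and normal bundle arguments go through in $\mathrm{TOP}$, or one can work in the smooth collar where the curves and membranes live), so the orthogonal splitting of the quadratic space is valid. Thus the only substantive ingredients beyond Theorem \ref{main_thm} are Theorem \ref{thm:closed_top} and the two cited properties of $\KS$ (additivity under gluing and vanishing on smoothable manifolds), and the conclusion follows by the same bookkeeping.
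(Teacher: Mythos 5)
Your proposal is correct and is essentially identical to the paper's proof, which likewise repeats the argument of Theorem \ref{main_thm} with Theorem \ref{thm:closed_top} replacing Theorem \ref{matsumoto} and invokes the same two properties of $\KS$ (additivity under gluing along boundaries and vanishing on the smoothable $W$). The points you flag as needing care — applying Lemma \ref{3d-4d} only inside the smooth collar of $W$ and handling $H_1(X;\mathbb{Z})\neq 0$ by surgery — are exactly the ones the paper relies on implicitly.
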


\section{A relative version of Rochlin's theorem for proper links} \label{sec:links}

In this section, we discuss the Arf invariant of a general link in a homology sphere and highlight the need to restrict to \emph{proper} links (see below).  We then indicate how the results of the previous section extend to this setting.

The Arf invariant for an oriented link $L$ in a homology sphere $\Sigma$ is generally only defined when the link is so called \emph{proper}, which means that for any component $K$ of the link $L$, the linking number $\lk(K, L-K)$ is even.  Robertello showed that if $K_1$ and $K_2$ are two knots in a homology sphere such that there are planar surfaces $P_1$ and $P_2$ properly embedded in $\Sigma \times I$ with 
$$
\partial P_1 = (L \times \{0\}) \cup (K \times \{1\})$$ 
and 
$$\partial P_2 = (L \times \{0\}) \cup (K_2 \times \{1\})$$
then $\Arf(K_1) = \Arf(K_2)$.  This gives a definition of the Arf invariant of a proper link $L$ as in, for example, \cite{hoste1984arf}.  By considering, say, the Hopf link, the necessity of the properness condition is seen.  There is a different definition similar to the 3-dimensional definition that we gave earlier for the Arf invariant of a knot (see, for example, \cite{lickorish1997introduction}).  

Here, we review this definition of the Arf invariant for links, following the presentation of the closely related Brown invariant of a link in \cite{kirby_melvin}, which, we believe, illuminates why the Arf invariant is only defined for proper links.  Lickorish offers a different explanation for this in terms of the Jones polynomial for links evaluated at $t = i$ \cite{lickorish1997introduction}.  We discuss the relevant 4-dimensional concepts, relate them to the 3-dimensional ideas, and prove a relative version of Rochlin's theorem for proper links.  This is a direct and straightforward generalization of what we did in the previous sections.  

We now review the algebra to illuminate the role of properness.  Let $V$ be a finite-dimensional vector space over $\mathbb{Z}/2\mathbb{Z}$ and let $\cdot : V \otimes V \to \mathbb{Z}/2\mathbb{Z}$ be a symmetric bilinear, but not necessarily nondegenerate form on $V$.  Let $R = \{ r \in V : r \cdot v = 0 \text{ for all } v \in V \}$ be the radical of $(V, \cdot)$.  Suppose further that there is a map $q : V \to \mathbb{Z}/\mathbb{Z}$ such that 
$$
q(x + y) = q(x) + q(y) + x \cdot y
$$
for all $x,y \in V$.  We will call $(V, \cdot, q)$ a \emph{quadratic space}.  If $q|_R = 0$, then we call $(V, \cdot, q)$ \emph{proper}.  In this case, we have a quadratic space $(V/R, \cdot, q)$ with a nondegenerate inner product $\cdot$, and therefore, we can use the previous definition and define the Arf invariant of $(V,\cdot,q)$ to be $\Arf(V/R, \cdot, q)$.  If $(V, \cdot, q)$ is not proper, then we make the convention that $\Arf(V, \cdot, q) = \infty$.  Together, the three quantities $\dim(V), \dim(R)$, and $\Arf(V,\cdot,q)$ completely classify $(V,\cdot,q)$ up to isomorphism.  Furthermore, the Arf invariant is additive with respect to sums of quadratic spaces, and since the sum of a nonproper quadratic space with any other quadratic space is again nonproper, this justifies the $\infty$ notation.  

The ``democratic" method of computing Arf also extends to this setting:

\begin{proposition}
	Let $(V, \cdot, q)$ be a quadratic space and let $q_0$ and $q_1$ be the cardinalities of the preimage of $0$ and $1$ under $q$, respectively.  Then
	\[
  \Arf(V, \cdot, q) =
  \begin{cases}
	  0 & \text{ if $q_0 > q_1$}, \\
	  1 & \text{ if $q_0 < q_1$},\\
	  \infty & \text{ if $q_0 = q_1$} 
  \end{cases}
	\]
\end{proposition}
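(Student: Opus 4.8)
The plan is to reduce to the nondegenerate case using the classification of quadratic spaces that was just stated, namely that $(V,\cdot,q)$ is determined up to isomorphism by the triple $(\dim V, \dim R, \Arf(V,\cdot,q))$, together with additivity of the Arf invariant. First I would handle the nonproper case: if $(V,\cdot,q)$ is not proper, there is some $r$ in the radical with $q(r)=1$. Then the translation map $x \mapsto x+r$ is a bijection of $V$ which satisfies $q(x+r) = q(x) + q(r) + x\cdot r = q(x) + 1$ since $r \in R$; hence it exchanges $q^{-1}(0)$ and $q^{-1}(1)$, so $q_0 = q_1$, which matches the $\infty$ clause.

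Next I would treat the proper case by splitting off the radical. Choose a complement $U$ to $R$ in $V$, so $V = R \oplus U$ as vector spaces; since $R$ is the radical and the form is symmetric, this is an orthogonal direct sum with respect to $\cdot$, and because $q|_R = 0$ and $q(x+y) = q(x)+q(y)+x\cdot y$, the restriction $q|_U$ makes $(U,\cdot,q|_U) \cong (V/R,\cdot,q)$ a nondegenerate quadratic space with the same Arf invariant. Counting over $V = R \oplus U$: for each $u \in U$ the coset $R + u$ contributes $|R|$ elements all with the same $q$-value $q(u)$, so $q_0 = |R|\cdot (q|_U)_0$ and $q_1 = |R|\cdot (q|_U)_1$. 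Thus the inequality $q_0 > q_1$ (resp. $<$, resp. $=$) holds if and only if the corresponding inequality holds for $q|_U$, and I am reduced to proving the democratic formula for a nondegenerate quadratic space — but that is precisely the classical statement already invoked in the text ("the democratic method of computing Arf") following Arf's theorem, and it follows from additivity together with a direct check on the two nondegenerate forms of dimension $2$, where the hyperbolic form with $\Arf = 0$ has $q_0 = 3 > 1 = q_1$ and the form with $\Arf = 1$ has $q_0 = 1 < 3 = q_1$; the general case follows by the orthogonal splitting into hyperbolic planes and the multiplicativity of the counts under orthogonal sum.

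The main obstacle, such as it is, will be bookkeeping the exceptional case when $\dim U = 0$ (the whole space is its radical): then $q \equiv 0$, $q_0 = |V| > 0 = q_1$, and $\Arf = 0$, consistent with the empty orthogonal sum having Arf invariant $0$; I would note this base case explicitly so the induction on hyperbolic planes starts correctly. I expect no genuine difficulty beyond making sure the translation argument in the nonproper case and the coset-counting in the proper case are stated cleanly, since everything else is a citation to the already-recalled classification and additivity of $\Arf$.
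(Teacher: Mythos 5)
Your proposal is correct and follows essentially the same route as the paper: the translation-by-$r$ argument for the nonproper case, and coset counting over the radical to reduce the proper case to the nondegenerate one (which the paper simply cites rather than re-deriving from hyperbolic planes as you do). No gaps.
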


\begin{proof}
	If $(V, \cdot, q)$ is not proper, then there exists an element $r$ in the radical with $q(r) = 1$.  Writing $V = (\mathbb{Z}/2\mathbb{Z})r \oplus W$, we find that given any $w \in W$, $q(w) \neq q(w + r)$ and therefore $q_0 = q_1$.  Now we assume that $(V, \cdot, q)$ is proper.  If the radical $R$ is trivial, then this result is shown in, for example, \cite{saveliev_book}.  If $R \neq 0$, then we will consider the restriction of $q$ to the quotient $V/R$, which we denote by $q_{V/R}$.  Then, letting $(q_{V/R})_0$ and $(q_{V/R})_1$ denote the cardinalities of the preimages of $0$ and $1$, respectively, we have $(q_{V/R})_0 = |R| q_0$ and $(q_{V/R})_1 = |R| q_1$, from which the result follows.  
\end{proof}

We now return to topology, in particular 3-dimensional topology.  Let $L$ be an oriented link in an integer homology sphere $\Sigma$. Let $F$ be a Seifert surface for $L$ and define as before
\begin{align*}
	q_S : H_1(S; \mathbb{Z}/2\mathbb{Z}) &\to \mathbb{Z}/2\mathbb{Z} \\
	x &\mapsto \lk(C, C^+)
\end{align*}
where $C$ is a curve embedded in $S$ representing $x$ and $C^+$ is a push-off of $a$ from $F$ in the normal direction.  As before, we have 
$$
q_S(x + y) = q_S(x) + q_S(y) + x \cdot y
$$
for all $x,y \in H_1(S; \mathbb{Z}/2\mathbb{Z})$, where here $\cdot$ is the intersection form on $H_1(S;\mathbb{Z}/2\mathbb{Z})$.  Note that $\im(H_1(\partial S;\mathbb{Z}/2\mathbb{Z}) \to H_1(S;\mathbb{Z}/2\mathbb{Z}))$ is the radical of this quadratic space and further note that 
\begin{align*}
	q_S([K]) &= \lk(K, K^+) \\ 
	         &= \lk(K, L-K) 
\end{align*}
where $K$ is a component of $L$ and $K^+$ is a push-off of $K$ using the surface framing.  Therefore, the quadratic space $(H_1(S;\mathbb{Z}/2\mathbb{Z}), \cdot, q_S)$ is proper if and only if $L$ is proper.  We define $\Arf(L) = \Arf(H_1(S; \mathbb{Z}/2\mathbb{Z}, \cdot, q_S)$ where, as in the case of knots, this is independent of the choice of Seifert surface (see \cite{lickorish1997introduction}).  

We now move on to 4-dimensions.  The situation is completely analogous to what we did for knots in the last section.  The facts that we need are not exactly found in \cite{matsumoto}, so we state the results here.  However, they follow exactly the corresponding proofs of the statements in \cite{matsumoto} and thus, we omit the proofs.  

\begin{lemma} \label{proper_relative_lemma}
	Let $X^4$ be a compact smooth 4-manifold with $H_1(X; \mathbb{Z}) = 0$ and let $F^2$ be an orientable characteristic surface that is properly embedded in $X$.  Let $x \in H_1(F; \mathbb{Z}/2\mathbb{Z})$ and suppose that $X$ is represented by a curve $C \in F$, and $D^2$ is an immersed orientable surface in $X$ that is transverse to $F$.  Then define 
	\begin{align*}
		q_F : H_1(F;\mathbb{Z}/2\mathbb{Z}) &\to \mathbb{Z}/2\mathbb{Z} \\
		x &\mapsto  D \cdot F + \mathcal{O}(D) + d(C) 
	\end{align*}
	where $\mathcal{O}(D)$ is the framing of $D$, and $d(C)$ is the number of double points of $C$, and $D \cdot F$ is the number of points of intersection between $D$ and $F$.  Then $q_F$ is well defined and, together with the intersection form, makes $H_1(F; \mathbb{Z}/2\mathbb{Z})$ a quadratic space.  If $F_1$ and $F_2$ are two such surfaces with $\partial F_1$ and $\partial F_2$ contained in the same boundary component of $X$ such that they are concordant in that boundary component, and such that the homology classes $[F_1] = [F_2]$ in $H_2(X, \partial X; \mathbb{Z}/2\mathbb{Z})$, then $\Arf(q_{F_1}) = \Arf(q_{F_2})$.  If $\partial F$ is contained in an integer homology sphere's boundary component of $X$, then $q_F$ is proper if and only if $\partial F$ is a proper link.  
\end{lemma}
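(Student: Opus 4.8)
The plan is to follow \cite{matsumoto} almost verbatim for the well-definedness, the quadratic identity, and the concordance invariance, and to add a short computation for the properness criterion, which is the one place where genuinely new link-theoretic behaviour enters; I would proceed in four steps. \textbf{Step 1 (well-definedness).} Fix $x \in H_1(F;\mathbb{Z}/2\mathbb{Z})$ and let $(C,D)$ and $(C',D')$ be two admissible choices. Reducing as in \cite{matsumoto} to the case $C = C'$ (by an isotopy of $C$ inside $F$), the union $D \cup_C D'$ is a closed mod-$2$ $2$-cycle $Z$ in $X$. Tubing $D$ into a closed surface changes $D\cdot F$ by $[Z]\cdot[F]$ and changes the framing obstruction $\mathcal{O}(D)$ by $[Z]\cdot[Z]$, so the two values of $q_F(x)$ computed from $(C,D)$ and $(C',D')$ differ by $[Z]\cdot[F] + [Z]\cdot[Z]$, which vanishes precisely because $F$ is characteristic (apply the defining relation with $G = [Z]$). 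This is the phenomenon flagged by the remark ``imagine changing the above choice of $D$ by tubing into some closed surface'' in Section~\ref{sec:lemmas}, and I would import the computation unchanged.

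\textbf{Step 2 (quadratic identity) and Step 3 (invariance).} For $q_F(x+y) = q_F(x)+q_F(y)+x\cdot y$, choose transverse immersed representatives $C_x, C_y$, use $C_x \cup C_y$ to represent $x+y$ with bounding surface $D_x \cup D_y$ put in general position: then $D\cdot F$ and $\mathcal{O}(D)$ are additive, $d(C_x \cup C_y) = d(C_x) + d(C_y) + |C_x \cap C_y|$, and $|C_x \cap C_y| \equiv x\cdot y \pmod 2$, which gives the identity; hence $(H_1(F;\mathbb{Z}/2\mathbb{Z}),\cdot,q_F)$ is a quadratic space in the sense of Section~\ref{sec:links}. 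For the invariance statement one again follows \cite{matsumoto}: from the hypothesis $[F_1]=[F_2]$ in $H_2(X,\partial X;\mathbb{Z}/2\mathbb{Z})$ together with a concordance between $\partial F_1$ and $\partial F_2$ in a single boundary component, one produces a sequence of ambient isotopies and tube additions and deletions relating $F_1$ to $F_2$; isotopies preserve $q_F$ by Step~1, and each tubing replaces the quadratic space by its orthogonal sum with a $2$-dimensional summand on which $q_F$ vanishes on a basis, so by the additivity of the Arf invariant recalled in Section~\ref{sec:links} the value $\Arf(q_F) \in \mathbb{Z}/2\mathbb{Z} \cup \{\infty\}$ --- and in particular the properness status --- is unchanged.

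\textbf{Step 4 (properness).} Assume $\partial F$ lies in a boundary component $\Sigma$ of $X$ that is an integer homology sphere. The radical $R$ of the intersection form on $H_1(F;\mathbb{Z}/2\mathbb{Z})$ is the image of $H_1(\partial F;\mathbb{Z}/2\mathbb{Z})$, generated by the components $[K_1],\dots,[K_n]$ of $\partial F$, and $q_F$ is additive on $R$ by Step~2 (since $x\cdot y = 0$ there), so it suffices to evaluate $q_F([K_i])$. Push $K_i$ off $F$ into the interior along the collar $\Sigma \times I$; since $\Sigma$ is a homology sphere, $K_i$ bounds a Seifert surface in a time slice $\Sigma \times \{t_0\}$, and pushing that surface slightly into the collar produces an admissible orientable $D$. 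By the same local analysis as in the proof of Lemma~\ref{3d-4d} one gets $d(K_i) = 0$ and $\mathcal{O}(D) = 0$ (the two normal vector fields of $D$ are the time direction and the Seifert framing, and the time direction restricts along $\partial D$ to a framing of $K_i$ normal to it inside $F$), while $D \cdot F$ counts the intersections of the pushed-in Seifert surface with the remaining boundary curves, so $D \cdot F \equiv \lk(K_i,\partial F - K_i) \pmod 2$. Therefore $q_F([K_i]) \equiv \lk(K_i,\partial F - K_i) \pmod 2$, so $q_F|_R = 0$ if and only if every such linking number is even, i.e.\ if and only if $\partial F$ is proper. (The relation $\sum_i [K_i] = 0$ in $R$ is consistent with this, since $\sum_i \lk(K_i,\partial F - K_i) = 2\sum_{i<j}\lk(K_i,K_j) \equiv 0$.)

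The step I expect to require the most care is Step~4: it is where the integer-homology-sphere hypothesis on the relevant boundary component is genuinely used, and where one must verify $\mathcal{O}(D) = 0$ via the framing discussion of Lemma~\ref{3d-4d}. Steps~1--3 are, as the paper indicates, transcriptions of \cite{matsumoto}, the only modification being that one carries the (now possibly nonzero) radical of the intersection form along with the quadratic form throughout and checks that the tube moves in the invariance argument do not affect properness.
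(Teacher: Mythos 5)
The paper itself gives no proof of this lemma: it states that the facts ``follow exactly the corresponding proofs of the statements in \cite{matsumoto}'' and omits them. Measured against that, your Steps 1 and 2 are the standard Freedman--Kirby/Matsumoto computations and are fine, and your Step 4 --- the one piece of content that is genuinely new relative to \cite{matsumoto}, namely the properness criterion --- is handled correctly: it is the exact 4-dimensional analogue of the paper's computation $q_S([K]) = \lk(K, L-K)$ in Section \ref{sec:links}, carried out with the framing analysis from the proof of Lemma \ref{3d-4d}, and your sanity check on $\sum_i [K_i] = 0$ is the right one.

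The gap is in Step 3. You assert that $[F_1]=[F_2]$ in $H_2(X,\partial X;\mathbb{Z}/2\mathbb{Z})$ together with a concordance of the boundaries lets one ``produce a sequence of ambient isotopies and tube additions and deletions relating $F_1$ to $F_2$.'' That is not an elementary consequence of the hypotheses. Two homologous properly embedded surfaces in a 4-manifold are not, in any obvious way, related by isotopy and tubing; the theorems that do give such stabilization equivalences (Sunukjian-type results) need simple connectivity and integral homology classes, are much more recent than \cite{matsumoto}, and are certainly not what that paper uses. The tubing picture you are importing is the \emph{3-dimensional} one (any two spanning surfaces for a link in a homology sphere are tube/isotopy equivalent, which the paper uses via Gordon--Livingston), and it does not transfer to codimension 2 in dimension 4. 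Matsumoto's invariance argument proceeds differently --- by comparing the two quadratic forms directly, using membranes and the (possibly singular) 3-chain cobounded by the two surfaces, rather than by first making the surfaces geometrically equivalent. Note also that you cannot rescue Step 3 by deducing invariance from Theorem \ref{main_thm} a posteriori: Lemma \ref{3d-4d}, which is an input to that theorem, already invokes the invariance of $\Arf$ under change of surface, so that route is circular. Step 3 therefore needs to be replaced by (a relative version of) Matsumoto's actual argument; the rest of your proposal stands.
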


With this setup, results analogous to Lemma \ref{3d-4d} and Theorem \ref{main_thm}, where now $F$ is allowed to have boundary a proper link, follow exactly as in the case of knots. The topological generalizations of these results also follow in the same manner and the hypothesis that $H_1(X;\mathbb{Z}) =0$ can be defined away just as in Section \ref{sec:lemmas}.

\section{The Brown invariant} \label{sec:brown}

In this section, we discuss the Brown invariant of a $\mathbb{Z}/4\mathbb{Z}$-enhanced inner product space over $\mathbb{Z}/2\mathbb{Z}$ following the general path we have now gone along a few times: first, we review the relevant bit of algebra; second, we discuss the use of this algebra to give an invariants of 3-dimensional knots; third, we discuss 4-dimensional results relating the algebra to characteristic surfaces in 4-manifolds; and finally, we prove a result that brings the 3- and 4-dimensional perspectives together.  

We start with the algebra.  Let $V$ be a finite-dimensional vector space over $\mathbb{Z}/2\mathbb{Z}$ and let $\cdot : V \otimes V \to \mathbb{Z}/2\mathbb{Z}$ be a symmetric bilinear, but not necessarily nondegenerate, form on $V$.    Suppose further that there is a map $e : V \to \mathbb{Z}/4\mathbb{Z}$ such that 
$$
e(x + y) = e(x) + e(y) + 2(x \cdot y)
$$
for all $x,y \in V$ where here $2 : \mathbb{Z}/2\mathbb{Z} \to \mathbb{Z}/4\mathbb{Z}$ is the inclusion map (we use such inclusions going forward without note).  We call such an $e$ a \emph{$\mathbb{Z}/4\mathbb{Z}$-valued quadratic enhancement of $\cdot$}.   We call $(V, \cdot, e)$ an \emph{enhanced space}.  One source of examples of enhanced spaces comes from taking a quadratic space $(V,\cdot,2q)$ where $(V,\cdot, q)$ is a quadratic space. If $e|_R = 0$, then we call $(V, \cdot, e)$ \emph{proper}.  

We now define the Brown invariant of an enhanced space $(V, \cdot, e)$ following \cite{kirby_melvin}.  Let $e_i$ be the size of the preimage of $i \in \mathbb{Z}/4\mathbb{Z}$ with respect to the map $e : V \to \mathbb{Z}/4\mathbb{Z}$.  Then the \emph{Brown invariant} of $(V, \cdot, e)$, denoted $\beta(V, \cdot, e) \in \mathbb{Z}/8\mathbb{Z}$, is defined as in Figure \ref{fig:compass}.  One property of the Brown invariant that we use below is that it is additive with respect to orthogonal direct sums of enhanced spaces.  

\begin{figure}  	
	\centering
	\includegraphics[width=8cm]{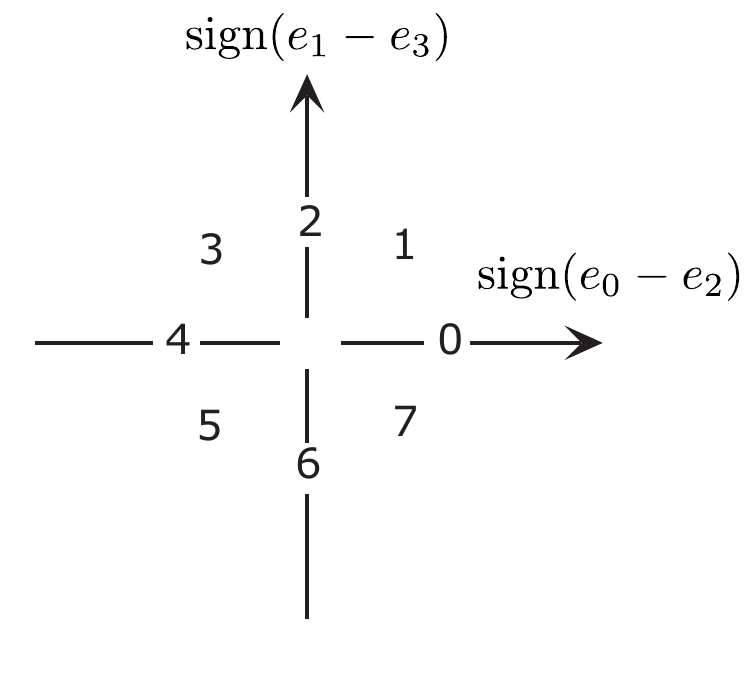}
	\caption{This figure gives the definition of the Brown invariant.  The quantities $e_i$ are the number of times that the enhancement $e$ takes on the value $i \in \mathbb{Z}/4\mathbb{Z}$ and $\text{sign}(e_0-e_2)$ and $\text{sign}(e_1-e_3)$ denote the signs (positive, zero, or negative) of the respective integers.  The quadrant is then determined by these two signs and the result is the Brown invariant.  For example, if $e_0 = e_2$ and $e_1 > e_3$, then the Brown invariant is $2$.}  
	\label{fig:compass}
\end{figure}

\begin{remark}
We could try and take this process one step further and look at $\mathbb{Z}/8\mathbb{Z}$-enhancements.  However, we find that every such enhancement $f$ is equal to $2e$ for some $\mathbb{Z}/4\mathbb{Z}$-enhancement $e$.  Similarly, suppose that 
$f: V \to \mathbb{Z}/2^n\mathbb{Z}$ is such that 
$$
	f(x+y) = f(x) + f(y) + 2^{n-1}(x \cdot y)
$$
	for all $x,y \in V$.  If $f(x)$ is odd, then since
	$$
	0 = f(x + x) = 2f(x) + 2^{n-1}(x\cdot x) \pmod{2^n}
	$$
	we find that $x \cdot x = 1$ and $n \in \{1,2\}$.  A general setup for quadratic refinements that contains what we have discussed here is considered by Taylor in \cite{taylor2001gauss} where the appearance of the number 8 is elucidated following an argument of Connolly.
\end{remark}

We now turn our attention to 3-dimensional knot theory.  Let $L$ be an unoriented link in an oriented integral homology sphere $\Sigma$.  We now define the Brown invariant of $L$, denoted $\beta(L)$  which when $L$ is not proper is $\infty$, and when $L$ is proper is valued in $\mathbb{Z}/8\mathbb{Z}$.  Note that, in contrast to $\Arf$, where we have an unoriented ambient manifold and an oriented link for $\beta(L)$, we require exactly the opposite.  If we orient $L$, then we can consider $\Arf(L)$ as in the previous section.  This is related to $\beta(L)$ by
\begin{equation}\label{eq:arf_relation}
	\beta(L) = 4 \Arf(L) + \lk(L) \pmod{8}
\end{equation}
where $\lk(L)$ is the sum of all linking numbers of all pairs of components in $L$.  This then explains the dependence of $\Arf(L)$ on the orientations of the individual components.  This also shows that $\beta(L) \in \{ 0,2,4,6\} \subset \mathbb{Z}/8\mathbb{Z}$.

To define $\beta(L)$, start by choosing an embedded (not necessarily orientable) surface $S$ in $\Sigma$ with boundary $L$.  We define a $\mathbb{Z}/4\mathbb{Z}$-valued quadratic enhancement $e_S : H_1(S; \mathbb{Z}/2\mathbb{Z}) \to \mathbb{Z}/4\mathbb{Z}$ of the intersection form on $H_1(S; \mathbb{Z}/2\mathbb{Z})$. By a \emph{band} in $\Sigma$, we mean an embedded annulus or M\"obius band in $\Sigma$.  We have a map
$$
h : \{ \text{immersed collections of bands in $\Sigma$} \} \to \mathbb{Z}/4\mathbb{Z}
$$
given (connected) bands by compatibly orienting the core of the band and the boundary, and then taking the linking number of the core with the boundary; for disconnected bands, $h$ is defined by summing over the connected components.  We then define $e_S(x)$ for $x \in H_1(F; \mathbb{Z}/2\mathbb{Z})$ by first taking an immersed (not necessarily connected) curve $C$ representing $x$ and defining 
\begin{align*}
	e_S : H_1(S; \mathbb{Z}/2\mathbb{Z}) &\to \mathbb{Z}/4\mathbb{Z} \\
	x &\mapsto h(B') + 2 d(C)
\end{align*}
where $B$ is a regular neighborhood of $C$ in $S$, $B'$ is a resolution of $B$ as in Figure \ref{fig:resolution}, and $d(C)$ is the number of double points in $C$.  The proof that this is well defined consists of showing it does not depend on the choice of resolution of $B$ to $B'$ as in Figure \ref{fig:resolution} and showing that the choice of $C$ representing $x$ does not matter.  For the former, the necessary calculation is given by computing linking numbers on of the two possible resolutions on the right side of Figure \ref{fig:resolution} in the case where $\Sigma = S^3$.  In the case of a general homology sphere, this follows from the 4-dimensional method of computing linking numbers in $\Sigma \times I$ applied to the regular homotopy between  the two possible resolutions on the right side of Figure \ref{fig:resolution} given by pushing the band on on the top right portion of Figure \ref{fig:resolution} through through the other band in order to obtain the bottom right part of Figure \ref{fig:resolution}.  The proof that the choice of $C$ representing $x$ does not matter is identical to the proof given in the case where $\Sigma = S^3$ as given in \cite{kirby_melvin}.  Note that if we switch the orientation on $\Sigma$, then the Brown invariant of $L$ changes sign since every element of $H_1(S;\mathbb{Z}/2\mathbb{Z})$ can be represented by an embedded curve and the term $h(B')$ will change signs when the orientation of $\Sigma$ is switched.  

We now check that $e_S$ is indeed a $\mathbb{Z}/4\mathbb{Z}$-valued quadratic enhancement of the intersection form on $H_1(S; \mathbb{Z}/2\mathbb{Z})$.  Let $x,y \in H_1(S; \mathbb{Z}/2\mathbb{Z})$ and let $C_x$ and $C_y$ be immersed curves in $S$ that represent $x$ and $y$, respectively.  Let $B'(C_x)$ and $B'(C_y)$ be resolutions of small regular neighborhoods of $C_x$ and $C_y$. Then $C_x \cup C_y$ represents $x+y$ with
$$
d(C_x \cup C_y) = d(C_x) + d(C_y) + x \cdot y
$$
and
$$
h(B'(C_x) \cup B'(C_y)) = h(B'(C_x)) + h(B'(C_y))
$$
thus,
\begin{align*}
	e_S(x+y) &= h(B'(C_x) \cup B'(C_y)) + d(C_x \cup C_y) \\
		 &= h(B'(C_x)) + h(B'(C_y)) + 2( d(C_x) + d(C_y) + x \cdot y)\\
		 &= e_S(x) + e_S(y) + 2(x \cdot y)
\end{align*}
We have therefore verified that $(H_1(S;\mathbb{Z}/2\mathbb{Z}),\cdot, e_S)$ is a $\mathbb{Z}/4\mathbb{Z}$-enhanced quadratic space.

\begin{figure}  	
	\centering
	\includegraphics[width=10cm]{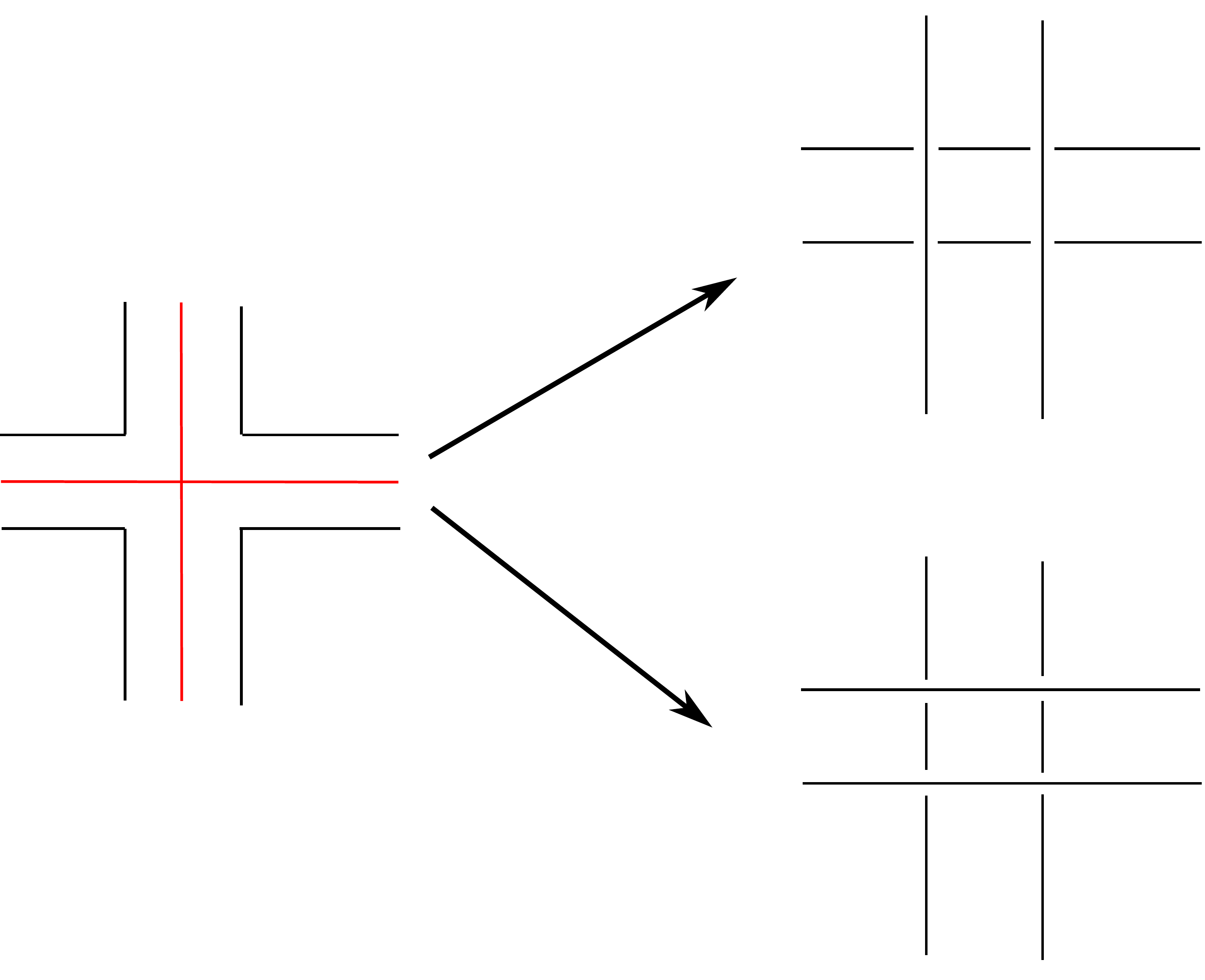}
	\caption{On the left we see a curve $C$ (in red) and a regular neighborhood $B$ of $C$ in $S$, where here, we are just showing the local picture around a point of intersection of $C$.  On the right we see the possible resolutions $B'$ of $B$ at this point of intersection.  After making one of these two choices at every point of self-intersection of $C$, we obtain a resolution $B'$ of $B$. } 
	\label{fig:resolution}
\end{figure}

Note that if $K$ is a component of $L$, then we have
$$
\lk(K, K^+) = \lk(K, L-K) \pmod 2
$$
where $K^+$ is a push-off of $K$ using the surface framing since $S$ shows that $[K^+] = [L-K]$ in $H_1(S;\mathbb{Z}/2\mathbb{Z})$.  Therefore,
\begin{align*}
	e_S([K]) &= 2 \lk(K,K^+) \\
		  &= 2 \lk(K, L-K)
\end{align*}
and thus $e_S$ is proper if and only if $L$ is proper.

We define the \emph{Brown invariant of $S$} to be the Brown invariant of $(H_1(S;\mathbb{Z}/2\mathbb{Z}),\cdot, e_S)$ and we denote this by $\beta(S)$.  We define the \emph{Brown invariant of $L$}, denoted $\beta(L)$, when $L$ is proper to be
$$
\beta(L) = \beta(S) - \phi(S)
$$
where $\phi(S)$ is half of the sum of the framings of the boundary components induced by $S$, and $\infty$ when $L$ is not proper.  

We now show that the Brown invariant for links in a homology sphere is well defined.  One way of showing that $\Arf$ is well defined is to recall that all Seifert matrices of a given link are $S$-equivalent, and then to see that the value of $\Arf$ does not change when we compute it using two different Seifert surfaces that differ by the addition of a tube as in Figure \ref{fig:add_tube} (although in the case of Seifert surfaces, we must be sure to add the tube in a way such that the resulting surface is orientable).  We mimic this proof, but we need to consider nonorientable surfaces.  Here, the relevant result is due to Gordon and Livingston \cite{mc1978signature}, which says that any two surfaces $S_1$ and $S_2$ that bound a given link $L$ in a homology sphere $\Sigma$ are related by a sequence of isotopies together with the moves shown in Figure \ref{fig:add_tube} and Figure \ref{fig:band_attach}.  

\begin{figure}  	
	\centering
	\includegraphics[width=10cm]{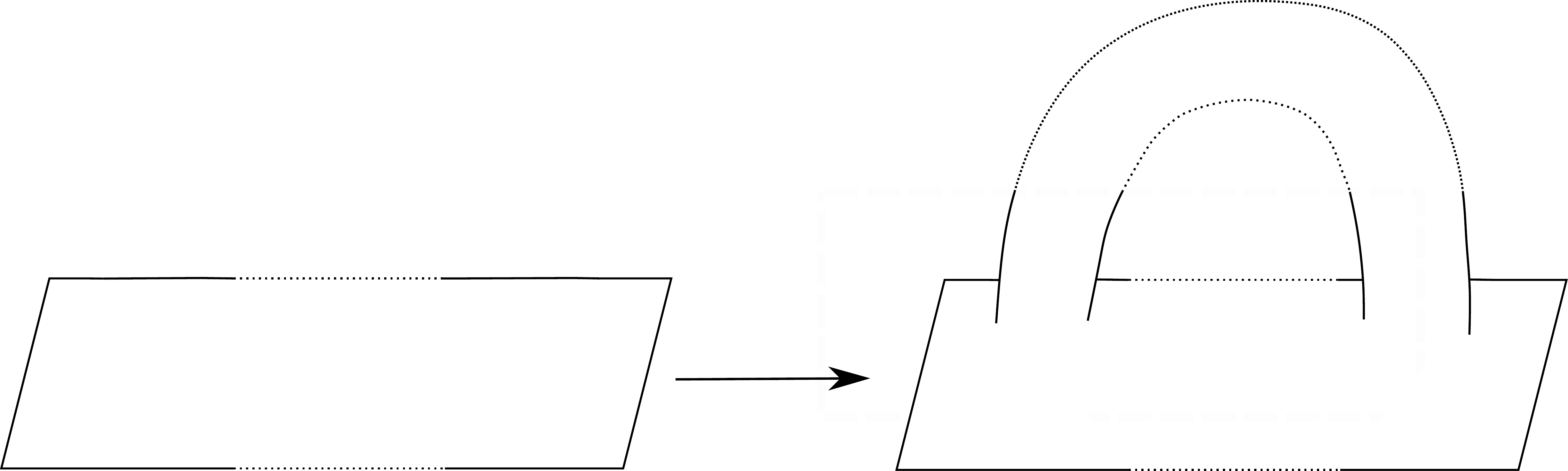}
	\caption{This figure shows one of the two moves that we will consider for changing the bounding surface for a link while not altering the bounding link.  The left side of the picture indicates the surface before the move.  The right hand side denotes the result of using this move to add a tube to the bounding surface.  It is not assumed that the two ends of the tube are in the same components of the surface and it is not assumed that the tube is added in an orientation-preserving way (the surface might not even be orientable).  Note that the bounding link is not altered.  }
	\label{fig:add_tube}
\end{figure}

\begin{figure}  	
	\centering
	\includegraphics[width=8cm]{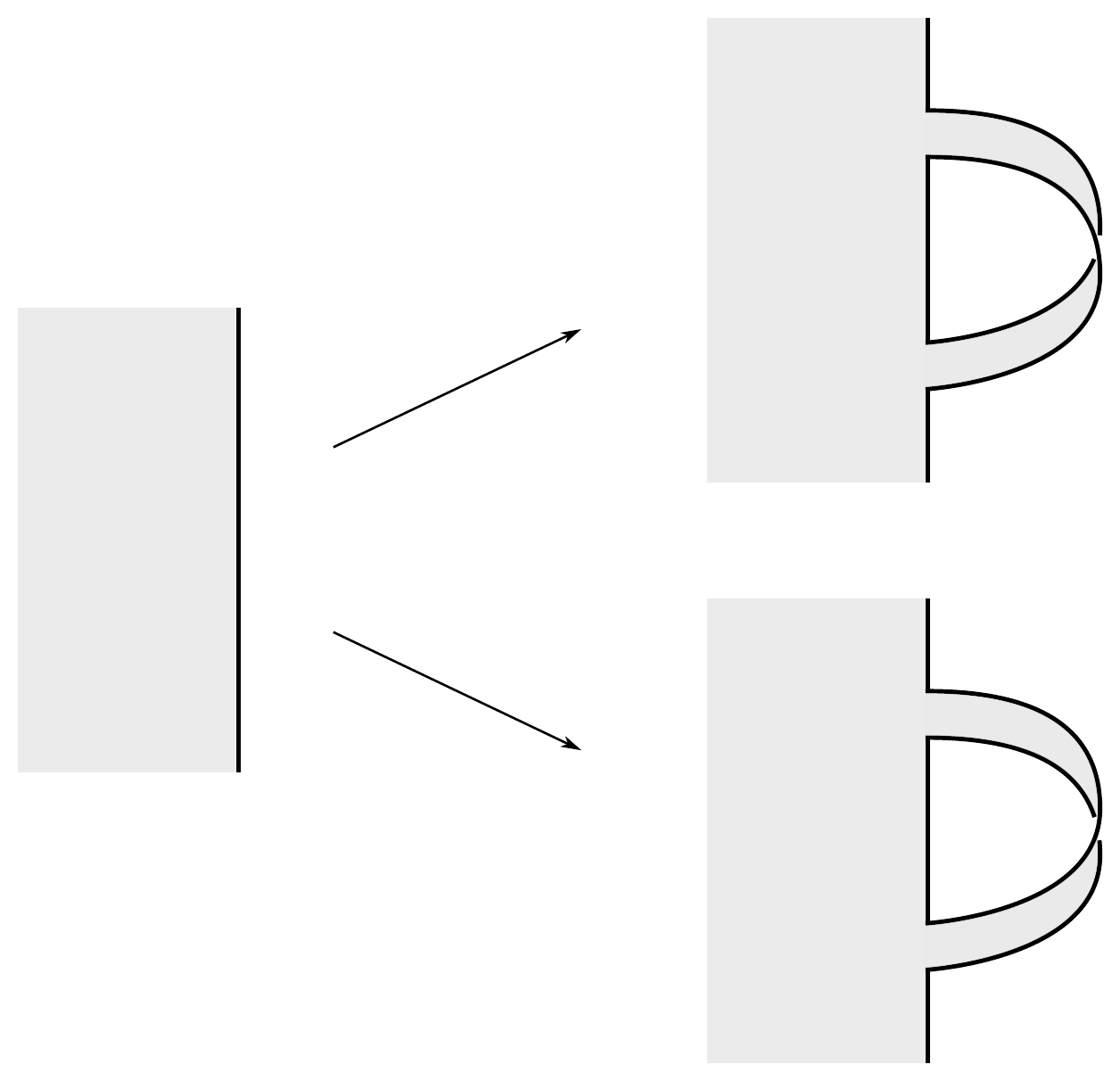}
	\caption{This figure shows one of the two moves (which has two variations) that we will consider for changing the bounding surface for a link while not altering the bounding link.  The left side of the picture indicates the surface before the move.  The the right hand side denotes the two possible modifications using this move.  Note that the bounding link is not altered.  }
	\label{fig:band_attach}
\end{figure}

To show invariance of the Brown invariant under these modifications to the bounding surface, we use an algebraic property of $\beta$, namely, that it is additive in the sense where if $(V,\cdot,e)$ is a $\mathbb{Z}/4\mathbb{Z}$-valued enhanced space and there is an orthogonal direct sum decomposition $V = V_1 \oplus V_2$, then 
$$
\beta(V, \cdot,e) = \beta(V_1,\cdot,e_{V_1}) + \beta(V_2,\cdot,e_{V_2})
$$
This follows from the Gauss sum method of computing $\beta$ as explained, for example, in \cite{kirby_melvin}.  

Let $L$ be a proper link in an oriented homology sphere $\Sigma$.  Let $S_1$ and $S_2$ be two surfaces in $\Sigma$ that bound $L$ such that $S_1$ and $S_2$ differ by the move in Figure \ref{fig:tube} where $S_2$ is the result of adding a tube to $S_1$ and where both of the ends of the tube are attached to the same connected component of $S_1$,  Then $H_1(S_2;\mathbb{Z}/2\mathbb{Z})$ splits orthogonally as 
$$
H_1(S_2;\mathbb{Z}/2\mathbb{Z}) = V \oplus (\mathbb{Z}/2\mathbb{Z}) a \oplus  (\mathbb{Z}/2\mathbb{Z}) b
$$ 
where $\beta(V) = \beta(S_1)$ as $V$ is the part of $S_2$ coming form $S_1$ and the curves $a$ and $b$ are as in Figure \ref{fig:tube}.  Then $e_{S_2}(a) = 0$ and 
$$
e_{S_2}(a+b) = e_{S_2}(a) + e_{S_2}(b) + 2(a \cdot b)
$$

Therefore, whatever $e_{S_2}(b)$ is, since we have $\beta((\mathbb{Z}/2\mathbb{Z}) a \oplus  (\mathbb{Z}/2\mathbb{Z}) b) = 0$.  Since $\phi(S_1) = \phi(S_2)$, it follows from additivity of the Brown invariant that
$$
\beta(S_1) - \phi(S_1) = \beta(S_2) - \phi(S_2)	
$$
as desired.  If instead we consider the case where $S_2$ is the result of attaching a tube to $S_1$ where the two ends of the tube are attached to different connected components, then there is no change in the homology and the proof of well definedness follows.  

\begin{figure}  	
	\centering
	\includegraphics[width=8cm]{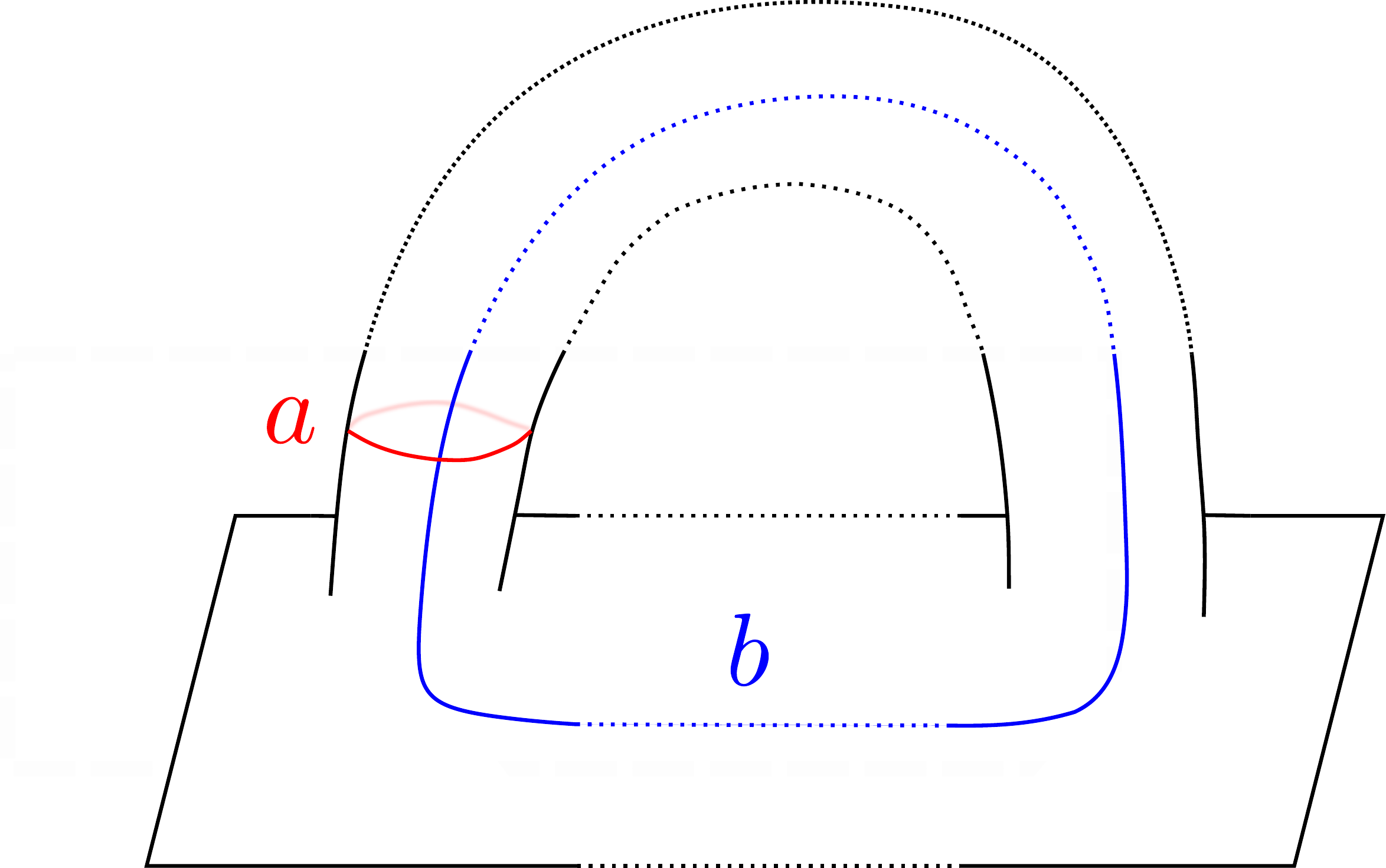}
	\caption{This figure shows the result of performing the move shown in Figure \ref{fig:add_tube} in the case where the two ends of the tube are on the same component of the surface.  We have labeled to curves $a$ and $b$, although $b$ involves making a choice.}
	\label{fig:tube}
\end{figure}

Suppose instead that $S_1$ and $S_2$ are two surfaces in $\Sigma$ that bound $L$ such that they differ by a move in Figure \ref{fig:nonorientable} (the argument for the other possibility in Figure \ref{fig:band_attach} is analogous).  Letting $a$ be the curve in Figure \ref{fig:nonorientable}, we find that $H_1(S_2;\mathbb{Z}/2\mathbb{Z})$ splits orthogonally as 
$$
H_1(S_2;\mathbb{Z}/2\mathbb{Z}) = W \oplus (\mathbb{Z}/2\mathbb{Z}) a
$$ 
where $\beta(W) = \beta(S_1)$ ($W$ is the part of $S_2$ coming form $S_1$) and since $e_{S_2}(a) = 1$, we have $\beta((\mathbb{Z}/2\mathbb{Z}) a) = 1$.  Therefore, by additivity, 
$$
\beta(S_2) = \beta(S_1) + 1
$$
The sum of the framings also changes with
$$
\phi(S_2) = \phi(S_1) + 1
$$
and therefore
$$
\beta(S_1) - \phi(S_1) = \beta(S_2) - \phi(S_2)
$$
We have thus verified that $\beta(L)$ is well defined.  

\begin{figure}  	
	\centering
	\includegraphics[width=6cm]{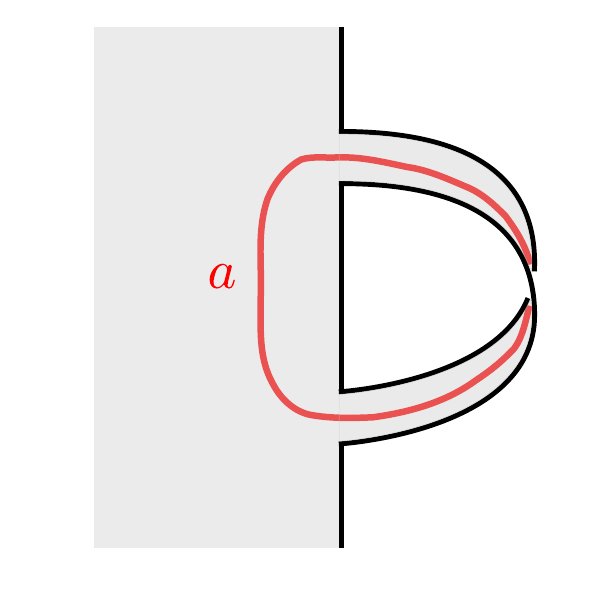}
	\caption{This figure shows the result of performing one of the two possible moves shown in Figure \ref{fig:band_attach} that alter a spanning surface for a knot without changing the knot.  We have labeled a curve $a$ on the resulting surface.  }
	\label{fig:nonorientable}
\end{figure}

Note that if $S$ is an orientable surface, then for any $x \in H_1(S; \mathbb{Z}/2\mathbb{Z})$, we can represent $x$ by a curve $C$ embedded in $S$ that has an annular regular neighborhood with boundary curves that we denote by $C_1$ and $C_2$.  Then
\begin{align*}
	e_s(x) &= \lk(C, C_1) + \lk(C, C_2) \\
	       &= 2 \lk(C, C_1) \\
	       &= 2 \lk(C, C^+) \\
	       &= 2 q_S(x)
\end{align*}
where $C^+$ is the push-off of $C$ from $S$ used in defining $q_S$.  Thus, $\beta(S) = 2 \Arf(\partial S)$. Further, letting $L_1,...,L_n$ be the components of $L$, and $L_i^+$ be the push-off of $L_i$ using the surface framing, note that 
\begin{align*}
	\phi(F) &= \frac{1}{2} \left(\lk(L_1, L_1^+) + \cdots + \lk(L_n, L_n^+) \right) \\
	        &= \frac{1}{2} \left( \sum_{i,j} \lk(L_i, L_j) \right) \\
		&= \lk(L)
\end{align*}
Therefore, equation \ref{eq:arf_relation} is verified.

Now we discuss the application of the Brown invariant to characteristic surfaces in 4-manifolds.  Note that our surfaces will not necessarily be orientable (and when they are, we recover results from the previous sections), but our 4-manifolds are always orientable and from now on oriented.  We first review the case of closed surfaces in closed 4-manifolds as in \cite{matsumoto} and \cite{guillou_manin} and then discuss the relative case that is of interest to us.  From now on, by a \emph{characteristic surface}, we mean an embedded (not necessarily orientable) surface $F^2$ in an orientable 4-manifold $X^4$ so that $F \cdot G = 0$ for all $G \in H_1(X; \mathbb{Z}/2\mathbb{Z})$, or equivalently, that $F$ is dual to $w_2(X)$.  We define a $\mathbb{Z}/4\mathbb{Z}$-valued enhancement of the intersection form $e_F : H_1(F; \mathbb{Z}/2\mathbb{Z}) \to \mathbb{Z}/4\mathbb{Z}$ as follows.  Given $x \in H_1(F; \mathbb{Z}/2\mathbb{Z})$, let $C \in F$ be a curve that represents $x$ and let $D$ be a connected orientable surface in $X$ that is immersed in $X$ with boundary $C$.  As before, the normal bundle of $D$ is trivial and we get a unique trivialization of the normal bundle of $D$ restricted to  $\partial D$.  The normal bundle of $C$ in $F$ is a line bundle contained in the normal bundle of $D$ restricted to $\partial D$ and we let $n(D)$ be the number of right-handed half-twists of this subbundle using the following convention.  Choose an orientation of $C$ and let $e_1$ and $e_2$ be sections of the normal bundle of $D$ restricted to $\partial D$ that form a basis at every point, such that, if $r_D$ is the radial outward direction of $D$ along every point of $C$, the ordered tuple of vectors $(r_D, C, e_1, e_2)$ agrees with the ambient orientation of $X$.  This convention allows us to pick out $e_1$ and $e_2$ and thus define right-handed half-twists.  Note the need to consider half-twists since $F$ is not necessarily oriented.  We then define
$$
e_F(x) = n(D) + 2(D \cdot F) + 2 d(C) \pmod 4
$$
where $d(C)$ is the number of double points of $C$ and $D \cdot F$ is the number of intersection points between $D$ and $F$.   We call $n(D)$ above the \emph{framing} of $D$.  In \cite{matsumoto} it is shown that this is well defined and it is verified that $e_F$ is a $\mathbb{Z}/4\mathbb{Z}$-valued refinement to the intersection form.  

Then the Brown invariant of $F$, denoted $\beta(F)$, is defined to be the Brown invariant of the $\mathbb{Z}/4\mathbb{Z}$-valued enhanced.  If $F$ is in fact orientable, then $n(D) = 2\mathcal{O}(D)$ and therefore $e_F(x) = 2 q_F(x)$ for all $x \in H_1(F; \mathbb{Z}/2\mathbb{Z})$; thus in this case $\beta(F) = 4 \Arf(F)$.

Given a closed (not necessarily orientable) surface $F$ in an oriented 4-manifold $X$, the \emph{self-intersection number} $F \cdot F$ is defined by pushing $F$ in the normal direction via an isotopy and counting the number of intersections of this push-off $F'$ with $F$, where the sign of a self-intersection is determined by first fixing a local orientation of $F$ at that point, giving $F'$ the same local orientation, and then seeing if the induced orientation on $X$ at the intersection point given by first using the local orientation of $F$ and then the local orientation of $F'$ agrees with $X$ or not -- these are counted as $+1$ and $-1$, respectively.  

The following theorem generalizes Theorem \ref{matsumoto} to the case of potentially nonorientable characteristic surfaces.  It is due to Guillou and Manin \cite{guillou_manin} and we are following the exposition by Matsumoto \cite{matsumoto}.  

\begin{theorem}(Guillou and Manin) \label{gm}
	Let $X^4$ be a closed oriented smooth 4-manifold with $H_1(X; \mathbb{Z}) = 0$, and let $F^2$ be a characteristic surface in $X$.  Then
	$$
	\sigma(X) = F \cdot F + 2\beta(F) \pmod{16}
	$$
\end{theorem}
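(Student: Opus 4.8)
The plan is to reduce the nonorientable statement (Theorem \ref{gm}) to the orientable one (Theorem \ref{matsumoto}) by a standard trick: make $F$ orientable by either passing to the $\mathbb{Z}/2\mathbb{Z}$-homology of $F$ together with a local orientation system, or more concretely, by tubing $F$ along its nonorientable bands inside $X$ to replace it with a homologous orientable surface. Concretely, if $F$ has a nonorientable handle, one can connect-sum $X$ with a copy of $\mathbb{CP}^2$ or $\overline{\mathbb{CP}^2}$ and tube $F$ into the corresponding $\mathbb{CP}^1$: this kills one cross-cap of $F$ at the cost of changing $\sigma(X)$ by $\pm 1$ and $F \cdot F$ by $\pm 1$, while one checks the enhancement $e_F$ restricted to the old part is unchanged and the new symplectic pair contributes a controlled amount to $\beta(F)$. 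Iterating, we arrive at an orientable characteristic surface $F'$ in a new closed 4-manifold $X'$ with $H_1 = 0$, to which Theorem \ref{matsumoto} applies (rewritten via $\beta(F') = 4\Arf(F')$).

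First I would treat the case where $F$ is already orientable: then $n(D) = 2\mathcal{O}(D)$, so $e_F = 2q_F$, the Brown invariant satisfies $\beta(F) = 4\Arf(F)$, and the congruence $\sigma(X) = F\cdot F + 2\beta(F) \pmod{16}$ becomes exactly $\frac{\sigma(X) - F\cdot F}{8} = \Arf(F) \pmod 2$, which is Theorem \ref{matsumoto}. So the content is entirely in reducing to this case. Next I would set up the crosscap-reduction move precisely: given a crosscap in $F$, find an embedded Möbius band $M \subset F$ and a disk (or $\mathbb{CP}^1$ after connect-summing) that allows one to do surgery on $F$ replacing $M$ by an annulus. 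I would verify three bookkeeping facts: (i) $F'$ remains characteristic in $X'$ — this uses that $\mathbb{CP}^1$ is characteristic in $\mathbb{CP}^2$ and the behavior of $w_2$ under connected sum; (ii) the change in $\sigma(X) - F\cdot F$ is divisible by $8$ after accounting for the change in $\beta$, i.e. $\Delta\sigma - \Delta(F\cdot F) = 2\Delta\beta \pmod{16}$, which amounts to checking the single elementary enhanced space introduced or removed contributes $\beta = \pm 2$ matching $\sigma = \pm 1$, $F\cdot F = \pm 1$; (iii) the enhancement $e_F$ on the surviving homology is genuinely unchanged, which follows from the well-definedness of $e_F$ proved in \cite{matsumoto} together with the fact that we are only modifying $F$ and $X$ in a small ball.

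The main obstacle I expect is item (ii)/(iii): pinning down exactly how $\beta(F)$, $\sigma(X)$, and $F\cdot F$ transform under the crosscap move, and in particular matching the Brown invariant's $\mathbb{Z}/8\mathbb{Z}$-valued compass-rule bookkeeping against the mod $16$ signature bookkeeping, so that everything is consistent and no orientation-convention sign is dropped (the half-twist convention in the definition of $n(D)$ is where sign errors would creep in). An alternative that sidesteps some of this is to not reduce at all but instead adapt Matsumoto's proof of Theorem \ref{matsumoto} directly, using the double cover of a tubular neighborhood of $F$ and Guillou–Manin's original argument via the $\mathrm{Pin}^-$ bordism group $\Omega_2^{\mathrm{Pin}^-} \cong \mathbb{Z}/8\mathbb{Z}$, whose generator (the Klein bottle, or $\mathbb{RP}^2$) carries Brown invariant $\pm 1$; but since the problem statement permits me to cite Theorem \ref{matsumoto} and the well-definedness results of \cite{matsumoto}, the crosscap-reduction route is the shortest honest path, with the computation in (ii) being the one place I would actually grind through the arithmetic.
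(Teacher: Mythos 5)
First, a contextual point: the paper does not prove Theorem \ref{gm} at all --- it is quoted from Guillou and Marin (following Matsumoto's exposition) and then used as a black box in the proof of Theorem \ref{brown_thm}. So there is no in-paper argument to compare against; you are proposing to supply a proof of a cited result. Your overall strategy --- induct away the crosscaps by blowing up and reduce to the orientable case --- is indeed the classical route and is close in spirit to Matsumoto's treatment, and your observation that the orientable case is exactly Theorem \ref{matsumoto} rewritten via $\beta(F)=4\Arf(F)$ is correct.

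That said, the sketch has genuine gaps at precisely the step carrying all the content. The geometric move is misdescribed: tubing $F$ into $\mathbb{CP}^1$ is an ambient connected sum of $F$ with a sphere, which changes $[F]$ and $F\cdot F$ but leaves the homeomorphism type of $F$ --- in particular its number of crosscaps --- unchanged, and adds no new classes to $H_1(F;\mathbb{Z}/2\mathbb{Z})$; so there is no ``new symplectic pair'' and no crosscap is killed. Likewise one cannot ``replace $M$ by an annulus'': a M\"obius band has a single boundary circle while an annulus has two. The correct move excises $M$ and caps $\partial M$ with a disk assembled from the exceptional sphere minus a small disk together with a connecting annulus; the framing condition for this cap to embed is exactly that $e_F([C])=\pm 1$ match the sign of the blow-up, and this is where the half-twist convention for $n(D)$ enters. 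Moreover the arithmetic in your item (ii) is internally inconsistent: a rank-one class $x$ with $x\cdot x=1$ and $e_F(x)=\pm 1$ contributes $\beta=\pm 1$, not $\pm 2$, so the identity to verify is $\Delta\sigma-\Delta(F\cdot F)=2\Delta\beta=\mp 2\pmod{16}$, forcing $\Delta\sigma$ and $\Delta(F\cdot F)$ to have \emph{opposite} signs; your proposed $\Delta\beta=\pm 2$ against $\Delta\sigma=\Delta(F\cdot F)=\pm 1$ cannot balance. Since you explicitly defer this computation and it is the entire content of the theorem beyond the orientable case, the proposal is a plan rather than a proof. Your ``alternative'' via $\Omega_2^{\Pin^-}\cong\mathbb{Z}/8\mathbb{Z}$ is in fact the framework the paper itself adopts later (Sections \ref{sec:brown} and \ref{sec:pin}, with Theorem \ref{final_closed} the Kirby--Taylor version of this statement), and would be the more robust route to write up.
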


Just as in the case of the Arf invariant, the condition that $H_1(X; \mathbb{Z}) = 0$ is removed by defining $\beta(F)$ for $F$ in a general closed oriented smooth 4-manifold to be the result of first doing surgery on a link disjoint from $F$ so that the resulting manifold $X'$ has $H_1(X'; \mathbb{Z}) = 0$ and then defining $\beta(F)$ as the Brown invariant of $F$ in $X'$.  By Theorem \ref{gm}, this is independent of the choice of surgery and hence well defined.   

In contrast to $\Arf$, which only depends on the homology class, the Brown invariant is not an invariant of the homology class. One such example of this is given by computing the Brown invariants of the two copies of $\mathbb{R}P^2$ contained in $S^4$ given by taking the right-handed (respectively left-handed) unknotted M\"obius bands in the present time $\mathbb{R}^3$-slice of $S^4$ and capping it off in the future time slices with an unknotted disk.

We need to be able to consider $\beta(F)$ where $F$ is not closed, but rather properly embedded in $X$.  The situation is analogous to the case of Lemma \ref{proper_relative_lemma}.  As before, the facts that we need are not exactly found in \cite{matsumoto}, so we state the results here, however they follow exactly the corresponding proofs of the statements in \cite{matsumoto} and so we omit the proofs.

\begin{lemma} \label{brown_relative_lemma}
	Let $X^4$ be a compact smooth 4-manifold with $H_1(X; \mathbb{Z}) = 0$ and let $F^2$ be a potentially nonorientable characteristic surface that is properly embedded in $X$.  Let $x \in H_1(F; \mathbb{Z}/2\mathbb{Z})$ and suppose that $x$ is represented by a curve $C \in F$ and $D^2$ is an immersed orientable surface in $X$ that is transverse to $F$.  Then define 
	\begin{align*}
		e_F : H_1(F;\mathbb{Z}/2\mathbb{Z}) &\to \mathbb{Z}/4\mathbb{Z} \\
		x &\mapsto  n(D)  + 2(D \cdot F) + 2d(C) 
	\end{align*}
	where $n(D)$ is the framing of $D$, and $d(C)$ is the number of points of $C$, and $D \cdot F$ is the number of points of intersection between $D$ and $F$.  Then $q_F$ is well defined and, together with the intersection form, makes $H_1(F; \mathbb{Z}/2\mathbb{Z})$ an enhanced space.  If $F_1$ and $F_2$ are identical characteristic surfaces, except that $F_2$ differs from $F_1$ by concordances in collar neighborhoods of the boundary components of $X$, then $\beta(F_1) = \beta(F_2)$.  If $\partial F$ is contained in an integer homology spheres boundary component of $X$, then $e_F$ is proper if and only if $\partial F$ is a proper link.  
\end{lemma}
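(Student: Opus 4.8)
The plan is to transcribe Matsumoto's arguments from \cite{matsumoto} for the closed nonorientable case (Theorem~\ref{gm}) essentially verbatim, grafting onto them the boundary bookkeeping already used in the orientable relative setting (Lemma~\ref{proper_relative_lemma} and the discussion in Section~\ref{sec:lemmas}), and to verify separately only the three statements that are genuinely new here: that $e_F$ is well defined and is a $\mathbb{Z}/4\mathbb{Z}$-valued enhancement of the intersection form; that $\beta(F)$ is unchanged when $\partial F$ is modified by a concordance supported in a collar; and the properness criterion. Since all of the harder analysis is imported from \cite{matsumoto}, the new content is light, which is why the proof can reasonably be suppressed.

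To establish well-definedness I would treat two ambiguities. If $D'$ is a second immersed orientable surface bounding $C$ and transverse to $F$, then $D \cup (-D')$ is a closed immersed orientable surface representing a class $\alpha \in H_2(X;\mathbb{Z})$, and the change in $n(D) + 2(D \cdot F) + 2d(C)$ is controlled modulo $4$ by $\alpha \cdot [F]$ (using the pairing $H_2(X;\mathbb{Z}) \times H_2(X,\partial X;\mathbb{Z}) \to \mathbb{Z}$ recalled before Theorem~\ref{main_thm}) together with the self-intersection of $\alpha$; these cancel modulo $4$ by Matsumoto's computation, which is exactly where the characteristic condition $\alpha \cdot [F] \equiv \alpha \cdot \alpha \pmod 2$ is used, with half-twists in place of twists. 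The residual ambiguity in the trivialization of the normal bundle of $D$ along $\partial D$ is removed as in Section~\ref{sec:lemmas}, since two such trivializations differ by a map $C \to \SO(2)$ that extends over $D$. Independence of the choice of immersed curve $C$ representing $x$ is Matsumoto's regular-homotopy argument unchanged, no step of which sees $\partial X$. Finally, for the enhancement identity I would represent $x + y$ by transverse curves $C_x \cup C_y$ and bound them by $D_x \sqcup D_y$ (the lemma does not require $D$ connected); then $d(C_x \cup C_y) = d(C_x) + d(C_y) + x \cdot y$, while $D \cdot F$ and $n(D)$ are additive over components, so $e_F(x+y) = e_F(x) + e_F(y) + 2(x \cdot y)$, mirroring the computation of $e_S$ in Section~\ref{sec:brown}. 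I expect this well-definedness step --- specifically the cancellation modulo $4$ forced by the characteristic hypothesis --- to be the main obstacle, both because it carries all the weight and because one must be careful with the orientation conventions defining $n(D)$ (the ordered frame $(r_D, C, e_1, e_2)$) and with the fact that the relevant intersection pairing is the relative one on $H_2(X,\partial X;\mathbb{Z})$.

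For concordance invariance, suppose $F_2$ is obtained from $F_1$ by replacing each boundary circle by a concordant one, i.e.\ by gluing a properly embedded collection of annuli in the relevant collar $\partial X \times I$ onto $\partial F_1$. Gluing an annulus onto a surface along one of its boundary circles does not change $H_1(-;\mathbb{Z}/2\mathbb{Z})$, so the inclusion $F_1 \hookrightarrow F_2$ (after a deformation retraction) induces an isomorphism $H_1(F_1;\mathbb{Z}/2\mathbb{Z}) \cong H_1(F_2;\mathbb{Z}/2\mathbb{Z})$ respecting the intersection form; moreover any class in $H_1(F_2;\mathbb{Z}/2\mathbb{Z})$ can be represented by a curve disjoint from the collar, on which the data $n(D), D\cdot F, d(C)$ defining $e_F$ agree with those computed in $F_1$. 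Hence the two enhanced spaces are isomorphic and $\beta(F_1) = \beta(F_2)$; this is the analogue of the way Lemma~\ref{3d-4d} enters the proof of Theorem~\ref{main_thm}.

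For the properness criterion, assume $\partial F \subset \Sigma$ with $\Sigma$ an integer homology sphere. With $\mathbb{Z}/2\mathbb{Z}$ coefficients, Lefschetz duality shows that the radical $R$ of the intersection form on $H_1(F;\mathbb{Z}/2\mathbb{Z})$ is the image of $H_1(\partial F;\mathbb{Z}/2\mathbb{Z})$, hence is generated by the classes $[K_i]$ of the components $K_i$ of $\partial F$. Because $R$ is the radical, $e_F|_R$ is a homomorphism $R \to \mathbb{Z}/4\mathbb{Z}$, so it vanishes if and only if $e_F([K_i]) = 0$ for every $i$. Representing $[K_i]$ by a push-in of $K_i$ into $F$ and taking $D$ to be a push-in of a Seifert surface for $K_i$ in $\Sigma$, one computes $d(C) = 0$, $n(D) = 0$ (the normal direction of $C$ in the product collar of $F$ is constant), and $D \cdot F \equiv \lk(K_i, \partial F - K_i) \pmod 2$, whence $e_F([K_i]) = 2\,\lk(K_i, \partial F - K_i) \pmod 4$. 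Therefore $e_F|_R = 0$ if and only if $\lk(K_i, \partial F - K_i)$ is even for all $i$, i.e.\ if and only if $\partial F$ is a proper link --- exactly parallel to the computations in Section~\ref{sec:brown} and to Lemma~\ref{proper_relative_lemma}.
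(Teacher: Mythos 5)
Your proposal is correct and follows exactly the route the paper intends: the paper explicitly omits this proof, stating that it ``follows exactly the corresponding proofs of the statements in \cite{matsumoto},'' and your outline (import Matsumoto's well-definedness argument, where the characteristic condition forces the mod-$4$ cancellation between the change in framing and the change in $D\cdot F$; deduce concordance invariance by representing classes away from the collar; and check properness by evaluating $e_F$ on the boundary generators of the radical) is a faithful expansion of that, consistent with the parallel computations in Sections \ref{sec:lemmas} and \ref{sec:brown} and with Lemma \ref{proper_relative_lemma}.
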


In mimicking the proof of Theorem \ref{main_thm}, we will need a result analogous to Lemma \ref{3d-4d}.  

\begin{lemma} \label{3d-4d-brown}
Let $\Sigma$ be a integer homology sphere and let $F$ be an properly embedded surface in $\Sigma \times I$ with $L = \partial F$ a proper link.  Then 
	$$
	2\beta(L) = 2\beta(F) + F \cdot F \pmod 8
	$$
where $F \cdot F$ is computed by extending the framing on $\partial F$ that is given by the 0-framing on each of the components of $\partial F$.  
\end{lemma}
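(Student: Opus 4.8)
The plan is to adapt the argument of Lemma \ref{3d-4d} to the nonorientable setting, tracking how the $\mathbb{Z}/4\mathbb{Z}$-enhancement $e_S$ for a spanning surface in $\Sigma$ relates to the $\mathbb{Z}/4\mathbb{Z}$-enhancement $e_F$ for the push-in of that surface into $\Sigma \times I$. Start by picking an arbitrary (not necessarily orientable) spanning surface $S \subset \Sigma = \Sigma \times \{0\}$ for $L$, and let $S'$ be a push-in of $S$ into $\Sigma \times I$, chosen (as in Lemma \ref{3d-4d}) so that any fixed embedded curve $C \subset S$, together with a band neighborhood, lies in a top time slice $\Sigma \times \{t_0\}$. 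By Lemma \ref{brown_relative_lemma}, $\beta(F)$ depends only on $F$ up to boundary concordance and the homology class, so $\beta(F) = \beta(S')$; similarly $F \cdot F = S' \cdot S'$, and we may compute both using $S'$. So the whole statement reduces to an identity comparing the enhanced space $(H_1(S;\mathbb{Z}/2\mathbb{Z}), \cdot, e_S)$ used to define $\beta(L) = \beta(S) - \phi(S)$ with the enhanced space $(H_1(S';\mathbb{Z}/2\mathbb{Z}), \cdot, e_{S'})$ used to compute $\beta(S')$, plus the correction term $S' \cdot S'$ versus $\phi(S)$.

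The key computation is to show $e_{S'}(x) = e_S(x)$ for $x \in H_1(S;\mathbb{Z}/2\mathbb{Z})$ represented by an embedded curve $C$ in $S$ with a band neighborhood $B$ lying in $\Sigma \times \{t_0\}$. For such a $C$, take $B'$ to be (one resolution of) $B$ — but since $C$ is embedded there are no double points and $B = B'$ is an honest band (annulus or M\"obius band) in $\Sigma \times \{t_0\}$. Then $e_S(x) = h(B') = \lk(\text{core}, \partial B')$ computed by pushing into $\Sigma \times \{0\}$. On the 4-dimensional side, the disk $D$ bounding (a push-off of) $C$ that we use to compute $e_{S'}(x)$ is built from a spanning surface for the appropriate boundary curve of $B'$ inside $\Sigma \times \{t_0\}$ together with the band $B'$ itself; exactly as in Lemma \ref{3d-4d}, the framing $n(D)$ of $D$ is determined by a normal field pointing in the time direction and one tangent to the slice, and the number of half-twists in the normal line bundle of $C$ in $S$ inside this framing equals precisely the linking-number count $h(B')$ — the factor of two discrepancy in the orientable case ($n(D) = 2\mathcal{O}(D)$) is absorbed because $h$ already counts half-twists via signed linking of core with \emph{both} boundary components of $B'$ (or the one boundary of the M\"obius band). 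And after a small perturbation, $D \cdot S'$ counts the intersections of the interior spanning surface with $C'$, which contributes the remaining $2\lk$ term, while $d(C) = 0$. Matching these term by term against $e_S(x) = h(B') + 2d(C)$ gives $e_{S'}(x) = e_S(x)$, hence the two enhanced spaces are isomorphic and $\beta(S') = \beta(S)$.

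With $\beta(F) = \beta(S') = \beta(S)$ in hand, the lemma reduces to checking $2\beta(L) = 2\beta(S) + S' \cdot S' \pmod 8$, i.e. $-2\phi(S) = S' \cdot S' \pmod 8$. This is a framing bookkeeping statement: $\phi(S)$ is half the sum of the framings that $S$ induces on the components of $L$, while $S' \cdot S'$ is computed by pushing $S'$ off itself using the $0$-framing on each component of $\partial F = L$. The difference between the $S$-induced framing and the $0$-framing on the $i$-th component is $\lk(L_i, L_i^+) = \lk(L_i, L - L_i)$ plus the self-framing contribution, and summing the normal Euler-number contributions over all components of $L$ shows that $S' \cdot S'$ equals $-2\phi(S) \pmod 8$ (up to the sign convention fixed for $\phi$). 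This is essentially the same computation that verified equation \ref{eq:arf_relation} in the orientable case, now without the orientability hypothesis, and it is where one must be careful: the normal bundle of a nonorientable $S$ is not trivial, so the ``framing induced by $S$'' must be interpreted correctly, but the relevant quantity $S' \cdot S'$ is still a well-defined integer computed slice-wise, and matching signs against Figure \ref{fig:compass}'s conventions for $\beta$ is the only real subtlety.

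\textbf{Main obstacle.} I expect the hardest part to be the sign/convention matching in the middle step — verifying that the half-twist count $n(D)$ of the line bundle of $C$ in $S$ (in the time-direction framing of $D$) really equals $h(B') = \lk(\text{core},\partial B')$ on the nose, with the right-handed-half-twist convention of the $e_F$ definition agreeing with the compatibly-oriented linking-number convention defining $h$. In the orientable case this is the clean identity $n(D) = 2\mathcal{O}(D)$, $e_S(x) = 2q_S(x)$; in the nonorientable case one genuinely has a M\"obius band and an odd number of half-twists, and one must check the conventions are calibrated so that no spurious sign or factor of $2$ appears. Everything else is a faithful transcription of the proof of Lemma \ref{3d-4d} together with the framing computation already carried out for equation \ref{eq:arf_relation}.
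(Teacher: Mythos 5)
There is a genuine gap at the very first step, and it is exactly the step the paper's proof is designed to handle. You assert that ``$\beta(F)$ depends only on $F$ up to boundary concordance and the homology class, so $\beta(F)=\beta(S')$,'' and similarly $F\cdot F = S'\cdot S'$. Both claims are false for nonorientable surfaces, and the paper itself points this out: the Brown invariant is \emph{not} an invariant of the homology class (witness the two standard $\mathbb{R}P^2$'s in $S^4$, which are homologous but have Brown invariants $\pm 1$ and normal Euler numbers $\pm 2$). Connect-summing a properly embedded $F$ with such an $\mathbb{R}P^2$ changes $\beta(F)$ and $F\cdot F$ individually while fixing $\partial F$ and $[F]$; only the combination $2\beta(F)+F\cdot F$ is preserved (mod $16$). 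Lemma \ref{brown_relative_lemma} does not rescue this: it only permits altering $F$ by a concordance of its boundary inside a collar, not changing the interior of the surface. So your reduction from an arbitrary $F$ to the pushed-in spanning surface $S'$ is unjustified, and without it the remainder of your argument only proves the lemma for $F=S'$.

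What is missing is precisely the first half of the paper's proof: one glues an even $4$-manifold $W$ with $H_1=0$ bounding $\Sigma$ and its reverse $\overline{W}$ onto $\Sigma\times[-1,1]$, places $F_1$ and $F_2$ in the two halves of the collar to form a closed characteristic surface in a closed $4$-manifold of signature zero, and applies the closed Guillou--Manin theorem (Theorem \ref{gm}) to conclude that $2\beta(F)+F\cdot F \pmod{16}$ is independent of the choice of $F$ bounding $L$. Only after this invariance is established may one compute with a convenient representative. The paper then chooses a pushed-in \emph{orientable} Seifert surface, for which $S'\cdot S'=0$ and $2\beta$ reduces to $8\Arf$, so the whole statement collapses to Lemma \ref{3d-4d}; this sidesteps entirely the delicate half-twist versus linking-number calibration ($n(D)$ against $h(B')$) that you correctly identify as the hard point of your direct approach and leave unverified. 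If you add the gluing/invariance step, your term-by-term comparison of $e_S$ with $e_{S'}$ for a nonorientable spanning surface would be a legitimate (if more laborious) alternative to the paper's reduction to the orientable case, but as written the proposal both omits the essential invariance argument and leaves its own main computation as a sketch.
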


\begin{proof}

We start by showing that if $F_1$ and $F_2$ are two surfaces properly embedded in $\Sigma \times I$ with $\partial F_1 = \partial F_2$, then
	\begin{equation} \label{eq-well-def}
	F_1 \cdot F_1 + 2 \beta(F_1) = F_2 \cdot F_2 + 2 \beta(F_2) \pmod{16}
	\end{equation}
Let $W^4$ be a smooth even compact oriented 4-manifold with $H_1(W;\mathbb{Z}) = 0$ and $\partial W = \Sigma$.  Consider $\Sigma$ as sitting inside of $\Sigma \times [-1,1]$ as the $0$-cross section and orient $\Sigma \times [-1,1]$ so that the induced orientation on $\Sigma \times \{ +1 \}$ agrees with the orientation on $\Sigma$.  Consider the oriented closed smooth 4-manifold, which we will call $X$, obtained by gluing $W$ to $\Sigma \times \{-1\}$ and $\overline{W}$ to $\Sigma \times \{+1\}$.  Further, consider the characteristic closed surface in $X$ obtained by taking the union of $F_1$ and $F_2$ along $L$ where $F_1$ is in $\Sigma \times [0,1]$ and $F_2$ is in $\Sigma \times [-1,0]$.  We call this surface $F$ and note that 
$$
F \cdot F = F_1 \cdot F_1 + F_2 \cdot F_2
$$
where $F_1 \cdot F_1$ and $F_2 \cdot F_2$ are both computed using the fixed all-0-framing on $L$ as in the statement of the lemma.  Further, by Novikov additivity, we have
$$
	\sigma(X) = \sigma(W) + \sigma(\overline{W}) = 0
$$
and therefore from Theorem \ref{gm}, equation \ref{eq-well-def} follows.

	It then follows that both the right and left hand sides of equation \eqref{eq-well-def} are invariants of the link $L$ that are independent of the choice of surface.  We then fix a Seifert surface $S$ for $L$ in $\Sigma$ and we let $S'$ be the result of pushing the interior of $S$ into $\Sigma \times I$.  Now note that $S' \cdot S' = 0$, and using the orientation of $L$ induced by choosing an orientation of $S$, we have
	$$
	2 \beta(L) = 8 \Arf(L)   \pmod{16}
	$$
and
	$$
	2 \beta(F) = 8 \Arf(F) \pmod{16}
	$$
	But by Lemma \ref{3d-4d} (or more specifically, the analogous statement for links) we have
$$
	\Arf(L) = \Arf(F) \pmod{2}
$$
and therefore the result follows.  
\end{proof}

We are now in position to mimic the proof of Theorem \ref{main_thm}.  We have not yet defined $\beta(F)$ when $F$ is a properly embedded characteristic surface in a 4-manifold $X$ but $H_1(X;\mathbb{Z}) \neq 0$; however, this can be done just as it was for $\Arf(F)$ once we have proven the following result in the case where $H_1(X;\mathbb{Z}) = 0$.  Further, with this definition in hand, the following result holds regardless of the vanishing of $H_1(X; \mathbb{Z})$.

\begin{theorem} \label{brown_thm}
Let $X^4$ be a compact oriented topological 4-manifold with $\partial X$ an integer homology sphere.  Let $F^2$ be a characteristic not necessarily orientable surface that is properly embedded in $X$ such that $\partial F$ is a proper link.  Then
	$$
	2 \beta(F) + 2 \beta(\partial F) =  \sigma(X) - F \cdot F + 8 \mu(\partial X) + 8 \KS(X) \pmod{16}
	$$
where $F \cdot F$ is computed by extending the framing on $\partial F$ that is given by the 0-framing on each of the components of $\partial F$.  
\end{theorem}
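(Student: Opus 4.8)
The plan is to run the proof of Theorem~\ref{main_thm_top} essentially verbatim, with the Arf invariant replaced everywhere by the Brown invariant (and the relevant powers of $2$), with Lemma~\ref{3d-4d} replaced by Lemma~\ref{3d-4d-brown}, and with Theorem~\ref{thm:closed_top} replaced by the topological analogue of the Guillou--Manin theorem (Theorem~\ref{gm}); this last result, which carries an extra term $8\,\KS$, follows from Theorem~\ref{gm} and Freedman's work in exactly the way Theorem~\ref{thm:closed_top} follows from Theorem~\ref{matsumoto}. As in that argument, I would first dispose of the hypothesis $H_1(X;\mathbb{Z})=0$ by surgering a link in $X\setminus F$ whose components generate $H_1(X;\mathbb{Z})$: this is what defines $\beta(F)$ in general, and it changes none of $\sigma(X)$, $F\cdot F$, or $\mu(\partial X)$, and also leaves $\KS(X)$ unchanged, since the surgery excises the smooth piece $S^1\times D^3$ and glues back the smooth piece $D^2\times S^2$, so additivity of $\KS$ (and its vanishing on these smooth pieces) gives equality.

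Assume then $H_1(X;\mathbb{Z})=0$. Choose a smooth even compact oriented $W^4$ with $H_1(W;\mathbb{Z})=0$ and $\partial W=\overline{\partial X}$, and a surface $F_W$ properly embedded in a collar $\partial X\times I\subset W$ with $\partial F_W=\partial F$. Since $\partial X$ is a $\mathbb{Z}/2\mathbb{Z}$-homology sphere, Mayer--Vietoris gives $H_1(X\cup W;\mathbb{Z})=0$ and $H_2(X\cup W;\mathbb{Z}/2\mathbb{Z})\cong H_2(X;\mathbb{Z}/2\mathbb{Z})\oplus H_2(W;\mathbb{Z}/2\mathbb{Z})$, and using in addition that $W$ is even one checks that the closed (possibly nonorientable) surface $F\cup F_W$ is characteristic in the closed topological $4$-manifold $X\cup W$. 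Applying the topological Guillou--Manin theorem to $(X\cup W,\,F\cup F_W)$ then yields
$$
\sigma(X\cup W)\;\equiv\;(F\cup F_W)\cdot(F\cup F_W)\;+\;2\beta(F\cup F_W)\;+\;8\,\KS(X\cup W)\pmod{16}.
$$

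Now I would evaluate the right-hand side term by term using additivity. By Novikov additivity $\sigma(X\cup W)=\sigma(X)+\sigma(W)$, and the definition of the Rochlin invariant (applied to the smooth even $W$ with $H_1(W;\mathbb{Z})=0$) gives $\sigma(W)\equiv 8\,\mu(\partial W)=8\,\mu(\partial X)\pmod{16}$, using that $\mu$ is insensitive to orientation reversal modulo $2$. Additivity of $\KS$ together with $\KS(W)=0$ gives $\KS(X\cup W)=\KS(X)$. For the remaining two terms I would decompose $F\cup F_W$ exactly as in the proof of Theorem~\ref{main_thm} (and its extension to links in Section~\ref{sec:links}): since $H_1(X;\mathbb{Z})=H_1(W;\mathbb{Z})=0$, additivity of $\beta$ over the corresponding orthogonal direct sum of $\mathbb{Z}/4\mathbb{Z}$-enhanced spaces gives $\beta(F\cup F_W)=\beta(F)+\beta(F_W)$, and separating the normal push-offs near $\partial X$ gives $(F\cup F_W)\cdot(F\cup F_W)=F\cdot F+F_W\cdot F_W$, both self-intersection numbers being computed with the $0$-framing on the boundary link. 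Finally, since $F_W$ lies in a collar of $\partial W$, Lemma~\ref{3d-4d-brown} -- more precisely the mod-$16$ well-definedness identity \eqref{eq-well-def} in its proof, applied to $F_W$ and to a Seifert surface for $\partial F$ pushed into the collar -- identifies $F_W\cdot F_W+2\beta(F_W)$ with $2\beta(\partial F_W)=2\beta(\partial F)$. Substituting all of this into the displayed congruence and rearranging gives the asserted formula.

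The step I expect to be the main obstacle is keeping the arithmetic straight at the mod-$16$ level, rather than at the mod-$8$ level at which several of the auxiliary statements are phrased. Concretely, one must verify that the topological Guillou--Manin statement carries exactly the coefficient $8$ on $\KS$, that the orthogonal-splitting and self-intersection decompositions of $F\cup F_W$ remain valid for the nonorientable surface $F\cup F_W$ and are compatible with the $0$-framing convention in the statement, and -- above all -- that the reduction of $F_W\cdot F_W+2\beta(F_W)$ to $2\beta(\partial F)$ correctly absorbs the framing corrections relating the $0$-framing to the Seifert framing of $\partial F$, together with the relation \eqref{eq:arf_relation} between $\beta$, $\Arf$, and $\lk$ for proper links. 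Each of these is routine in isolation, but the mod-$16$ bookkeeping is where the genuine care is needed.
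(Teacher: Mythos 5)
Your proposal is correct and follows essentially the same route as the paper: cap off with a smooth even $W$ bounding $\overline{\partial X}$, apply the topological Guillou--Manin theorem (with the $8\,\KS$ term) to the closed characteristic surface $F\cup F_W$ in $X\cup W$, and use Novikov additivity, additivity of $\beta$ and of the self-intersection, and Lemma~\ref{3d-4d-brown} to convert the $F_W$ contribution into $2\beta(\partial F)$. The paper states this proof very tersely by reference to the proof of Theorem~\ref{main_thm}; your write-up simply fills in the same steps in more detail, including the correct use of the mod-$16$ identity from the proof of Lemma~\ref{3d-4d-brown}.
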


\begin{proof}
	We assume that $X$ is smooth since the case where $X$ is not smooth follows from the smooth case exactly as in the proof of Theorem \ref{main_thm_top}.  We also assume that $H_1(X; \mathbb{Z}) = 0$.  Once we have addressed the case where $H_1(X; \mathbb{Z}) = 0$, it then follows that the usual definition of $\beta(F)$ extends to the case where $H_1(X; \mathbb{Z}) \neq 0$ and is well defined.  Further, the theorem follows in this case as well.  We now proceed exactly as in the proof of Theorem \ref{main_thm} where Lemma \ref{3d-4d-brown} is used in place of Lemma \ref{3d-4d}.  To obtain the topological case, we need a topological version of Theorem \ref{gm} due to Kirby and Taylor (see \cite{kirby1990pin}) where $8 \KS(X)$ is added to both sides of the equation in Theorem \ref{gm}.  
\end{proof}

For an example calculation, note that the Borromean rings have Brown invariant equal to 4 and therefore cannot bound a connected planar surface in $B^4$. Therefore, the Borromean rings cannot bound any planar surface in $B^4$.

\section{Spin structures} \label{sec:spin}

In this section, we reinterpret the results from some of the previous sections in terms of spin structures.  Many of the results and insights in this section come from \cite{kirby1990pin} and \cite{kirbybook}.  The quadratic refinements and enhancements in the previous sections are algebraic avatars for spin and pin structures, respectively and in this section, we will clarify this relationship and rederive some of the results from earlier sections with this language.  In this section, we discuss spin structures and in the next section we discuss pin structures (which, for us here, always means $\Pin^-$ structures).  The situation with spin structures is simpler than with pin structures, however, in this section we make an effort to explain the spin constructions in a language that will later be generalized when we move to pin.  

We begin by discussing spin structures and in particular by utilizing the isomorphism 
$$
\Arf : \Omega_2^{\Spin} \to \mathbb{Z}/2\mathbb{Z}
$$
which we now describe.  Given a closed connected surface $F$ with a spin structure $s$, this spin structure can be identified with a unique cohomology class $\sigma_s \in H^1(F_{\SO}(F); \mathbb{Z}/2\mathbb{Z})$, where $F_{\SO}(F)$ is the associated frame bundle of $F$, such that for every fiber the restriction map $H^1(F_{\SO}(F); \mathbb{Z}/2\mathbb{Z}) \to H^1(\SO(2); \mathbb{Z}/2\mathbb{Z})$ maps $\sigma(s)$ to the nontrivial element.  Letting $U(F)$ denote the unit tangent bundle of $F$ and note that we have a canonical identification $F_{\SO}(F) \to U(F)$ from which we obtain an element $\sigma(s) \in H^1(U(F); \mathbb{Z}/2\mathbb{Z})$.  Following Johnson \cite{johnson1980spin}, we then obtain a quadratic refinement of the intersection form on $H_1(F; \mathbb{Z}/2\mathbb{Z})$ as follows.  First, we notice that to any simple closed curve $\alpha$ in $F$ by choosing an orientation on $\alpha$, we obtain a natural section of $U(F) \to F$.  Furthermore, if we change the chosen orientation of $\alpha$, then the image of the new section is isotopic to the image of the old section.  Thus with either orientation, we will call the image of the resulting section $\vec{\alpha}$.  We then have a map
\begin{align*}
	H_1(F; \mathbb{Z}/2\mathbb{Z}) &\to H_1(U(F); \mathbb{Z}/2\mathbb{Z}) \\
	a = \sum_i^m [\alpha_i] &\mapsto  \tilde{a} := \sum_i^m \vec{\alpha_i} + mz
\end{align*}
where $z$ is the homology class of the fiber of $U(F)$ and where the curves $\alpha_i$ are pairwise disjoint.  

Then, after identifying $H^1(U(F); \mathbb{Z}/2\mathbb{Z})$ with $\Hom(H_1(U(F); \mathbb{Z}/2\mathbb{Z}), \mathbb{Z}/2\mathbb{Z})$ and considering $\sigma(s)$ in the latter, we have the quadratic refinement
\begin{align*}
	q_s : H_1(F; \mathbb{Z}/2\mathbb{Z}) &\to \mathbb{Z}/2\mathbb{Z} \\
	x &\mapsto \sigma_s(\tilde{x})
\end{align*}

We then define $\Arf(F,s)$ as $\Arf(q_s)$.  Johnson showed that this map from spin structures on $F$ to quadratic refinements of the intersection form on $H_1(F; \mathbb{Z}/2\mathbb{Z})$ is equivariant with respect to the natural free and transitive actions of $H^1(F; \mathbb{Z}/2\mathbb{Z})$ on both sets and therefore is a bijection.  

We now give a different description of $q_s$ that relies on Johnson's work to show that it is well defined.  We mention this because we find it to be the most transparent description of $q_s$.  Recall that there are two inequivalent spin structures on $S^1$ and $\Omega_1^{\Spin} = \mathbb{Z}/2\mathbb{Z}$ so one of these spin structures is called the \emph{Lie group spin structure} (the spin structure that is nonzero in $\Omega_1^{\Spin}$ and is the trivial double cover of $S^1$) and the \emph{bounding spin structure} (the spin structure that is induced by taking any spin structure on a spin manifold bounding $S^1$ and taking the induced spin structure on the boundary).  Given an oriented bundle $\xi$, we let $\mathcal{S}pin(\xi)$ denote the set of equivalence classes of Spin structures on $\xi$ that respect the orientation.  Using the inclusion $\Spin(n) \to \Spin(n+1)$, there is then a bijection 
$$
\mathcal{S}pin(\xi) \to \mathcal{S}pin(\xi + \epsilon)
$$
where $\epsilon$ is an oriented line bundle.  Let $x \in H_1(F; \mathbb{Z}/2\mathbb{Z})$ and let $\alpha$ be a simple closed curve in $F$ representing $x$.  If we give $\alpha$ an orientation, then the normal bundle $\nu_{\alpha \subset F}$ obtains a natural orientation so that the  orientation of $T\alpha$ together with orientation of $\nu_{\alpha \subset F}$ agrees with the ambient orientation of $F$.  Then we can restrict the spin structure on $F$ to a spin structure on $T\alpha + \nu_{\alpha \subset F}$ and then this induces a unique spin structure on $\alpha$ which is either the Lie group or the bounding spin structure.  Additionally, the choice of orientation of $\alpha$ does not affect the resulting element of $\Omega_1^{\Spin}$.  

We now show that $q_s(x) = 0$ if the induced spin structure on $\alpha$ is the bounding spin structure and $q_s(x) = 1$ if the induced spin structure on $\alpha$ is the Lie group spin structure.  To see this, first note that by definition 
\begin{align*}
	q_s(x) &= \sigma_s(\vec{\alpha} + z) \\
	       &= \sigma_s(\vec{\alpha}) + 1
\end{align*}
so we must understand $\sigma_s(\vec{\alpha})$.  Having oriented $\alpha$, we obtain an inclusion $i_{\vec{\alpha}} : F_{\SO}(\alpha) \to F_{\SO}(F)$ and the induced spin structure on $S^1$ corresponds to the cohomology class 
$$
i_{\vec{\alpha}}^*(\sigma_s) \in H^1(F_{\SO}; \mathbb{Z}/2\mathbb{Z}) \cong H^1(\alpha; \mathbb{Z}/2\mathbb{Z}) \cong \mathbb{Z}/2\mathbb{Z}
$$
Where $0 \in \mathbb{Z}/2\mathbb{Z}$ corresponds to the Lie group spin structure and $1 \in \mathbb{Z}/2\mathbb{Z}$.  Since 
$$
i_{\vec{\alpha}}^*(\sigma_s)([\alpha]) = \sigma_s(\vec{\alpha})
$$
the result follows.  


\begin{remark}
We could discuss spin structures on an oriented bundle $\xi$ without mention of a choice of metric by talking about $\widetilde{GL^+}$-principal bundles that map equivariantly to $F_{\GL^+}(\xi)$ where $\widetilde{\GL^+}$ is the universal cover of $\GL^+$.  However, the tradition is to prefer working with compact Lie groups and we will follow suit.  Thus in what follows there will be choices of metrics when appropriate, although ultimately none of these choices are important.  
\end{remark}

We now return to the situation of Section \ref{sec:main}, but through the lens of spin structures.  Let $X^4$ be a closed oriented connected smooth 4-manifold and let $F^2$ be a closed connected oriented characteristic surface in $X^4$.  The manifold $X-F$ admits a spin structure that does not extend over $F$ and the group $H^1(X; \mathbb{Z}/2\mathbb{Z})$ acts freely and transitively on the set of such spin structures (see \cite{kirby1990pin}).  Following \cite{kirby1990pin}, we call such a spin structure a \emph{spin characterization} of $F$ in $X$ and we denote the set of equivalence classes of such spin characterizations by $\mathcal{S}pin\mathcal{C}har(X,F)$.  We now describe a map
$$
\mathcal{S}pin\mathcal{C}har(X^4,F^2) \to \mathcal{S}pin(F^2)
$$
(see page 66 of \cite{kirbybook}).  Let $E$ denote the normal bundle of $F$ in $X$ and let $\partial E$ denote the unit normal bundle of $F$ in $X$.  Then from the spin structure on $X - F$, $\partial E$ inherits a spin structure.  The bundle $\partial E$ admits a section after removing a point $\ast$ of $F$.  Choose such a section $sec$.  Note that we now have a splitting as the sum of two line bundles
$$
\nu_{F \subset X} = \lambda + \epsilon
$$
where the trivial bundle $\epsilon$ is trivialized from the choice of section $s$ and $\lambda$ is the orthogonal complement of $\epsilon$.  Therefore, $\lambda$ inherits a unique orientation such that the orientation of $F$ together with the orientation of $\lambda$ and $\epsilon$ agrees with the ambient orientation of $X$.  Note that we have a canonical isomorphism
$$
\nu_{sec(F - \ast) \subset \partial E} \cong \lambda
$$
from which we orient $\nu_{sec(F - \ast )i\subset \partial E}$.  Then using the spin structure on $\partial E$ together with this orientation of $\nu_{sec(F - \ast) \subset \partial E}$, we obtain a spin structure on $sec(F - \ast)$ and hence on $F$.  That this is well defined (i.e., independent of the choice of section $s$ and $\ast$) is shown in \cite{kirbybook} (see page 66) and follows from the spin structure on $X-F$ being characteristic.    

This construction works analogously in all dimensions but we only have use for the aforementioned case.  The construction also works analogously for $X$ and $F$ with boundary and $F$ properly embedded in $X$, and for $F$ disconnected where characterizations are required to not extend over any component of $F$.  It may be helpful for the reader to keep in mind the analogous construction for orientations.  Namely, given a codimension-1 embedded submanifold $V$ dual to $w_1$, there is a map from the set of orientations on the complement of $V$ to the set of orientations on $V$ (see Lemma 2.2 as well as Proposition 2.3 of \cite{kirby1990pin}).  

In the case where $H_1(X; \mathbb{Z}/2\mathbb{Z}) = 0$, then there is a unique characterization of $F$ in $X$ and thus we get a unique spin structure $s$ on $F$ by applying the above construction.  Then by Johnson's construction, this yields a unique quadratic refinement of the intersection form
$$
q_s : H_1(F; \mathbb{Z}/2\mathbb{Z}) \to \mathbb{Z}/2\mathbb{Z}
$$
In section \ref{sec:main}, we discussed a different such quadratic refinement $q_F$.  We now show that $q_s = q_F$.  Let $x \in H_1(F; \mathbb{Z}/2\mathbb{Z})$.  Let $C$ be an embedded curve in $F$ that represents $x$ and let $D$ be an embedded surface in $X$ with $\mathcal{O}(D) = 0$ (i.e., $D$ is framed, which can always be achieved by boundary twisting).  Then $q_F(x) = D \cdot F$ so it suffices to verify that $q_s(x) = D \cdot F$.  

Given an $n$-dimensional bundle $\xi$ over $S^1$ with a spin structure $s$  on $\xi$, we get a corresponding element $[\xi, s]$ of $\Omega_1^{\Spin} \cong \mathbb{Z}/2\mathbb{Z}$, corresponding to whether $(\xi, s)$ is isomorphic to the appropriate stabilization of the Lie group spin structure on $TS^1$ or the bounding spin structure on $TS^1$.  If $G$ is an orientable surface with boundary and $\xi$ is a bundle over $F$ with spin structure $s$, then
$$
\sum_{C \in \pi_0(\partial F)} [C, s_C] = 0 \pmod 2 
$$
Note that $q_s(x) = [sec(C), c|_{sec(C)}]$ where $c$ is the given characterization of $F$ in $X$ and further, we assume that the section $sec$ has been chosen so that $sec(C) \subset D$.  Let $G = D \cap (X- \interior(E))$ where $E$ is the unit normal bundle of $F$ in $X$ and note that since $c$ is a characteristic, then for every point in $D \cap F$, there is a boundary component $b$ of $G$ with $[b, c|b] = 1$.  It thus follows that
\begin{align*}
	q_s(x) &= [sec(C), c|_{sec(C)}] \\
	       &= \sum_{b \in \pi_0(G), b \neq sec(C)} [b, c|_{b}] \\
	       &= D \cdot F
\end{align*}
as desired.  

In \cite{kirbybook}, an equivalent form of Theorem \cite{matsumoto} is proven in the language of spin structures and follows from a calculation of a certain bordism.  Namely, the  following is shown:

\begin{theorem}(Kirby) \label{kirby_rochlin}
Let $X^4$ be a closed oriented smooth 4-manifold, let $F$ be an oriented characteristic surface in $X$, and let $c$ be a charicterization of $F$ in $X$.  Then
$$
	\Arf(F, s(c)) = \frac{\sigma(X) - [F]^2}{8} \pmod 2	
$$
where $s(c)$ is the spin structure on $F$ induced by the characterization $c$.  
\end{theorem}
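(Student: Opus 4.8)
The plan is to reduce to the case $H_1(X;\mathbb{Z}) = 0$ and then to recognize $\Arf(F,s(c))$ as the Arf invariant of the quadratic refinement $q_F$ of Section~\ref{sec:main}, at which point Theorem~\ref{matsumoto} applies directly. So first suppose $H_1(X;\mathbb{Z}) = 0$. Then $H^1(X;\mathbb{Z}/2\mathbb{Z}) = 0$, so $F$ has a unique characterization and it must be $c$. The computation carried out above shows that the quadratic refinement $q_{s(c)}$ of the intersection form on $H_1(F;\mathbb{Z}/2\mathbb{Z})$ obtained from $c$ by Johnson's construction coincides with $q_F$; therefore $\Arf(F,s(c)) = \Arf(q_{s(c)}) = \Arf(q_F) = \Arf([F])$. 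Now Theorem~\ref{matsumoto}, applied to the characteristic class $[F] \in H_2(X;\mathbb{Z})$, gives $\Arf([F]) = (\sigma(X) - [F]^2)/8 \pmod 2$, which is the claim.

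For the general case I would imitate the way the hypothesis $H_1 = 0$ was removed from Theorem~\ref{matsumoto}. Pick finitely many embedded circles in $X - F$ whose classes generate $\pi_1(X)$ --- any loop may be pushed off $F$, since $F$ has codimension two --- and surger them to obtain $X'$ with $\pi_1(X') = 1$. The surgeries take place away from $F$, so $F$ sits in $X'$ with an unchanged normal bundle; hence $[F]^2$ is unchanged, and $\sigma(X') = \sigma(X)$ because $X$ and $X'$ are oriented cobordant through the trace of the surgeries. The point requiring care is that the surgeries can be done compatibly with the spin structure $c$: for each surgery circle the framing of its normal bundle may be altered by the nontrivial element of $\pi_1(\SO(3)) \cong \mathbb{Z}/2\mathbb{Z}$, and this toggles the spin structure induced on the circle by $c$, so one can always arrange that induced spin structure to be the one that extends over the surgery. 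The resulting spin structure $c'$ on $X' - F$ agrees with $c$ in a neighborhood of $F$; in particular it still fails to extend over $F$, so it is a characterization of $F$ in $X'$, and the induced spin structure $s(c')$ on $F$ equals $s(c)$, so that $\Arf(F,s(c')) = \Arf(F,s(c))$. Applying the previous paragraph to $(X',F,c')$ completes the proof.

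The main obstacle is exactly this last bookkeeping in the general case: simultaneously realizing a $\pi_1$-killing collection of surgeries inside $X - F$, performing each compatibly with $c$, and checking that $\sigma(X)$, $[F]^2$, and the germ of $c$ near $F$ are all left undisturbed. Everything else is immediate from the already-established identity $q_{s(c)} = q_F$ together with Theorem~\ref{matsumoto}. One could instead quote Kirby's spin-bordism computation in \cite{kirbybook} directly, using that both sides of the identity are invariants of the bordism class of the triple $(X,F,c)$; the route above has the advantage of staying within the constructions developed in this paper.
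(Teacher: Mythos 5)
Your argument is correct, but it is not the paper's route: the paper does not prove Theorem \ref{kirby_rochlin} at all, instead quoting it from \cite{kirbybook}, where it is obtained from a computation of a spin (characteristic) bordism group. You instead derive it from material already in the paper: when $H_1(X;\mathbb{Z})=0$ the identity $q_{s(c)}=q_F$ established just before the theorem statement reduces the claim to Theorem \ref{matsumoto}, and in general you kill $H_1$ by surgery on circles in $X-F$, performed compatibly with the characterization $c$ (the standard fact that exactly one of the two normal framings of an embedded circle, differing by the generator of $\pi_1(\SO(3))$, allows the spin structure to extend over the surgery) so that the germ of $c$ near $F$, hence $s(c)$, as well as $\sigma$ and $[F]^2$, are all preserved. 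This is a legitimate and self-contained alternative; note that it inverts the paper's logic, since the paper \emph{uses} Theorem \ref{kirby_rochlin} to conclude afterwards that $\Arf(F)=\Arf(F,s(c))$ when $H_1(X;\mathbb{Z})\neq 0$, whereas you establish that equality directly and deduce the theorem from it -- there is no circularity, because Theorem \ref{matsumoto} is quoted independently. What the bordism-theoretic proof buys is independence from Theorem \ref{matsumoto} (indeed it reproves it); what your route buys is that everything stays within the constructions of Sections \ref{sec:lemmas} and \ref{sec:spin}. The only cosmetic quibble is that you kill all of $\pi_1(X)$ where killing $H_1(X;\mathbb{Z})$ suffices for Theorem \ref{matsumoto} as the paper uses it.
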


It follows from this result that, even when $H_1(X; \mathbb{Z}) \neq 0$, $\Arf(F)$ as in section \ref{sec:main} agrees with $\Arf(F, s(c))$ for any choice of characterization $c$ of $F$ in $X$.

The following result relating the Arf invariant of a knot in a homology sphere to the Arf invariant of a certain spin surface is also proved in \cite{kirbybook} (see page 69).

\begin{lemma} \label{rob:arf}
	Let $\Sigma$ be a integer homology sphere and let $F$ be a connected orientable properly embedded surface in $\Sigma \times I$ with connected boundary.  There is a unique spin structure on $\Sigma \times I - F$ that does not extend over $F$ and this induces a unique spin structure on $F$.  Let $\overline{F}$ denote the result of attaching a disk to the $\partial F$ and extending the spin structure on $F$ to this disk, and let $s$ denote the resulting spin structure on $\overline{F}$.  Then $\Arf( \partial F) = \Arf(\overline{F}, s)$.  
\end{lemma}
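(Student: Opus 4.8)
The plan is to identify Johnson's quadratic refinement $q_s$ of the intersection form on $H_1(\overline{F};\mathbb{Z}/2\mathbb{Z})$ attached to the spin surface $(\overline{F},s)$ with the four-dimensional quadratic refinement $q_F$ of $F\subset\Sigma\times I$ from Section \ref{sec:main}, and then to invoke Lemma \ref{3d-4d}. Once $q_s=q_F$ is known we obtain $\Arf(\overline{F},s)=\Arf(q_s)=\Arf(q_F)=\Arf(F)=\Arf(\partial F)$, the last equality being Lemma \ref{3d-4d}.

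First I would pin down the spin structures. Since $\Sigma$ is an integral homology sphere we have $H^1(\Sigma\times I;\mathbb{Z}/2\mathbb{Z})=0$ and $w_2(\Sigma\times I)=0$, so $F$ is automatically characteristic and there is a unique characterization $c$ of $F$ in $\Sigma\times I$; by the construction recalled in Section \ref{sec:spin} it determines a unique spin structure $s(c)$ on $F$. To see that $s(c)$ extends over a disk glued along $\partial F$, I would apply the fact used in Section \ref{sec:spin} — for an orientable surface carrying a spin structure, the sum over its boundary circles of the induced elements of $\Omega_1^{\Spin}$ is zero — to $F$ itself: $F$ has a single boundary component, so the spin structure induced on $\partial F$ is the bounding one and hence extends over the disk. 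Since $H^1(\overline{F},F;\mathbb{Z}/2\mathbb{Z})=0$, this extension is unique, giving the well-defined spin structure $s$ on $\overline{F}$ with $s|_F=s(c)$.

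Next I would compare the two forms. The inclusion $F\hookrightarrow\overline{F}$ is an isomorphism on $H_1(\,\cdot\,;\mathbb{Z}/2\mathbb{Z})$, so any $x\in H_1(\overline{F};\mathbb{Z}/2\mathbb{Z})$ is represented by an embedded (possibly disconnected) curve $C$ lying in the interior of $F$, away from the capping disk; because Johnson's construction depends only on the restriction of the spin structure to a neighborhood of $C$, we get $q_s(x)=q_{s(c)}(x)$. Now I would rerun the computation of Section \ref{sec:spin} verbatim with the closed ambient $4$-manifold there replaced by $\Sigma\times I$: choose an embedded framed surface $D$ in $\Sigma\times I$ with $\partial D=C$ and $\mathcal{O}(D)=0$ (possible since $H_1(\Sigma\times I;\mathbb{Z})=0$ and one can always boundary-twist), transverse to $F$ and meeting $\partial(\Sigma\times I)$ only along $\partial D$. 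Then $q_F(x)=D\cdot F$, while comparing the spin structure induced on $C$ with those induced on the remaining boundary circles of $G=D\cap(\Sigma\times I-\interior(E))$ — where $E$ is the unit normal bundle of $F$ — shows, exactly as in Section \ref{sec:spin}, that each intersection point of $D$ with $F$ contributes precisely one Lie-group boundary circle of $G$, so $q_{s(c)}(x)=D\cdot F$. Hence $q_s=q_F$, and the chain of equalities above yields $\Arf(\overline{F},s)=\Arf(\partial F)$.

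The step I expect to be the main obstacle is verifying that the Section \ref{sec:spin} argument relating $q_{s(c)}(x)$ to the count $D\cdot F$ really survives intact in this relative setting — that $D$ and $G$ can be arranged disjoint from the capping region and from $\partial(\Sigma\times I)$ except along $C$, so that the characterizing property of $c$ still produces exactly one Lie-group boundary component of $G$ per intersection point of $D$ with $F$, with no extra contributions coming from the boundary of the ambient manifold. Everything else is a direct appeal to results already established, principally Lemma \ref{3d-4d} and the identity $q_s=q_F$ from Section \ref{sec:spin}.
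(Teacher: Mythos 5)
Your proof is correct, but it is worth noting that it does not reconstruct the argument the paper points to: the paper offers no proof of Lemma \ref{rob:arf} at all, instead citing page 69 of Kirby's book, where the identity is obtained directly (essentially Robertello's theorem, comparing the Seifert-form definition of $\Arf(\partial F)$ with the spin Arf invariant of a pushed-in, capped-off Seifert surface). What you do instead is assemble the lemma from two results already in the paper: the identification $q_s = q_F$ from Section \ref{sec:spin} (correctly observing that the computation via $G = D \cap (\Sigma \times I - \interior(E))$ is local around $C$ and $D$ and so survives the passage from a closed ambient manifold to $\Sigma \times I$, and that $H_1(F;\mathbb{Z}/2\mathbb{Z}) \cong H_1(\overline{F};\mathbb{Z}/2\mathbb{Z})$ lets you work entirely away from the capping disk), together with Lemma \ref{3d-4d}. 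Your preliminary points are also right and fill in details the statement leaves implicit: the induced spin structure on the single boundary circle is the bounding one because $(F, s(c))$ is itself a spin nullbordism of it, and the extension over the disk is unique because restriction $H^1(\overline{F};\mathbb{Z}/2\mathbb{Z}) \to H^1(F;\mathbb{Z}/2\mathbb{Z})$ is injective. The one trade-off to be aware of is architectural rather than logical: since your route passes through Lemma \ref{3d-4d}, the "alternative proof" of Theorem \ref{main_thm} in Section \ref{sec:spin}, which invokes Lemma \ref{rob:arf}, would then silently depend on the same 3d--4d comparison used in the original proof in Section \ref{sec:main}, whereas the cited external proof keeps the spin-structure derivation independent of it. There is no circularity, since Lemma \ref{3d-4d} is established without spin structures, but the independence of the two proofs of Theorem \ref{main_thm} is lost under your approach.
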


We are now in position to give an alternative proof of Theorem \ref{main_thm} using spin structures.

\begin{proof} (Alternative proof of Theorem \ref{main_thm})
We need one fundamental additional additivity observation.  Namely, if $F_1$ and $F_2$ are two compact surfaces with spin structures, then, by removing a disk from both $F_1$ and $F_2$ and identifying the boundaries with an orientation-reversing diffeomorphism, the connect sum $F_1 \sharp F_2$ inherits a natural spin structure and 
$$
\Arf(F_1 \sharp F_2) = \Arf(F_1) + \Arf(F_2)
$$
This can be seen by noting that $F_1 \sharp F_2$ is spin cobordant to the disjoint union of $F_1$ and $F_2$ or by using the quadratic enhancement description of spin structures on surfaces together with the fact that the additivity of the Arf invariant.  

	Note that, since $H^1(\partial X; \mathbb{Z}) = 0$ and $\partial F$ is dual to $w_2$ in $\partial X$, there is a unique spin structure on $\partial X - \partial F$ that does not extend over $\partial F$, which we call the \emph{characterization} of $\partial F$ in $\partial X$.   We now proceed exactly as in the previous proof of Theorem \ref{main_thm}, where we use Theorem \ref{kirby_rochlin} in place of Theorem \ref{matsumoto}, and we adopt the notation without further mention.  Let $c$ denote a characterization of $F$ in $X$ and note that restricted to the boundary this results in the characterization $\partial c$ of $\partial F$ in $\partial X$.  Note that $W - F_W$ also admits a (unique) characterization of $F_W$ in $W$ and that when restricted to the boundary, this induces the same characterization $\partial c$ of $\partial F$ in $\partial X$, although with the orientation reversed.  Therefore,	 these two characterizations of $F$ in $W$ and of $F_W$ in $W$ glue together to yield a characterization of $\overline{F}$ in $X \cup W$.  Note that the induced spin structure on $F \cup F_W$  coming from this characterization, which we call $s$, when restricted to $F$ or $F_W$, agrees with the spin structure on $F$ or $F_W$ obtained from the characterization of $F$ in $X$ or of $F_W$ in $W$, respectively.  

	It follows from Lemma \ref{rob:arf} together with our previous comments on why the spin definition of Arf agrees with the previous definition for a characteristic surface, we have 
	\begin{align*}
		\Arf(F \cup F_W, s) &= \Arf(F, s|_F) + \Arf(F_W, s|_{F_W}) \\
				    &= \Arf(F) + \Arf(\partial F) 
	\end{align*}
From this, together with the same calculation as in the previous proof of Theorem \ref{main_thm}, the result follows.  

\end{proof}

We end this section by discussing the case where $\partial F$ is a link as in section \ref{sec:links} and we follow the notation from that section.  From the perspective of spin structures, we can start with a characterization $c$ of $X -F$ and this will induce a spin structure on $F$ which therefore gives a spin structure on each of the components of $\partial F$.  These will all be the bounding spin structures (regardless of the choice of characterization $c$) if and only if $\partial F$ is a proper link in $\partial X$.  The reason is that the induced form $q_{s(c)}$ from the induced spin structure $s(c)$ on $F$ agrees with the form from section \ref{sec:links}, thus the discussion in section \ref{sec:links} applies.  Then by using a version of Lemma \ref{rob:arf} adjusted to proper links, we obtain a proof of relative version of Rochlin's theorem for links mentioned in section \ref{sec:links} in the language of spin structures.

\section{Pin structures} \label{sec:pin}

By a pin structure, we always mean a $\Pin^-$ structure and we closely follow \cite{kirby1990pin}, which we recommend for background information.  The discussion in this section is terser than the last section as much of the material here is considered in great detail in \cite{kirby1990pin}.  The appearance of $\Pin^-$ as opposed to $\Pin^+$ in the discussion that follows ultimately stems from the fact that all compact surfaces admit $\Pin^-$ structures as well as the more interesting low-dimensional bordism groups that $\Pin^-$ has, namely:
\begin{align*}
	\Omega_1^{\Pin^-} &= \mathbb{Z}/2\mathbb{Z}   &\Omega_1^{\Pin^+} &= 0 \\
	\Omega_2^{\Pin^-} &= \mathbb{Z}/8\mathbb{Z}   &\Omega_1^{\Pin^+} &= \mathbb{Z}/2\mathbb{Z}
\end{align*}
What is particularly relevant for us here is the isomorphism
$$
\beta : \Omega_2^{\Pin^-} \to \mathbb{Z}/8\mathbb{Z}
$$
which is given by taking a compact surface with pin structure $(F, p)$, associating a quadratic enhancement to the intersection form 
$$
e_p : H_1(F; \mathbb{Z}/2\mathbb{Z}) \to \mathbb{Z}/4\mathbb{Z}
$$
and then taking the Brown invariant
$$
\beta(F,p) := \beta(H_1(F;\mathbb{Z}/2\mathbb{Z}), \cdot, e_p)
$$
The definition of $e_p$ together with the fact that the map $\beta$ is an isomorphism are discussed in section 3 of \cite{kirby1990pin}.  In the case where we consider a spin structure $s$ on $F$ as a pin structure using the inclusion $\Pin^-(2) \to \Spin(2)$, we have
$$
e_s = 2 q_s
$$
and therefore
$$
\beta(F, s) = 4 \Arf(F,s)
$$
Let $\Pin^-(F)$ denote the set of equivalence classes of pin structures on $F$.  

We now discuss the relevant descend-of-structure results from section 6 of \cite{kirby1990pin}.  Let $X^4$ be a closed oriented connected smooth 4-manifold and let $F^2$ be a compact characteristic surface properly embedded in $X^4$.  The manifold $X-F$ admits a pin structure that does not extend over any component of $F$ and the group $H^1(X; \mathbb{Z}/2\mathbb{Z})$ acts freely and transitively on the set of such pin structures (see \cite{kirby1990pin}).  Following \cite{kirby1990pin}, we call such a pin structure a \emph{pin characterization} of $F$ in $X$ and we denote the set of equivalence classes of such pin characterizations by $\mathcal{P}in^-\mathcal{C}har(X,F)$.  There is a map
\begin{equation} \label{eq:pin_descend}
\mathcal{P}in^-\mathcal{C}har(X^4,F^2) \to \mathcal{P}in^-(F^2)
\end{equation}
which is natural with regards to the respective cohomology group actions and, when $F$ is characteristic, fits into a commuting square with the corresponding map for spin structures described in the previous section.  

The set of pin characterizations of $F$ in $X$ is acted on freely and transitively by $H^1(X; \mathbb{Z}/2\mathbb{Z})$ and so, in the case where $H_1(X; \mathbb{Z}/2\mathbb{Z})$, there is a unique pin characterization of $F$ in $X$.  In this case, using the map in (\ref{eq:pin_descend}), there is an induced pin structure $p$ on $F$ which corresponds to a quadratic enhancement
$$
e_p : H_1(F; \mathbb{Z}/2\mathbb{Z}) \to \mathbb{Z}/4\mathbb{Z}
$$
In section \ref{sec:brown}, a different such quadratic enhancement $e_F$ was described, and in fact 
$$
e_p = e_F
$$
(see the discussion on page 221 of \cite{kirby1990pin}).  Note that this gives another proof of the analogous result for spin structures from section \ref{sec:spin}.   

In the case where $X$ and $F$ have nonenmpty boundary, we call a characterization $c$ of $F$ in $X$ \emph{even} if the induced pin structure on $F$ given by (\ref{eq:pin_descend}), which we denote by $p(c)$, has the property that on every boundary component, it yields the bounding pin structure.  In this case we define the Brown invariant of $F$ in $X$ to be 
$$
\beta(F, c) := \beta(\overline{F}, p(c))
$$
where $\overline{F}$ is the result of capping off all of the boundary components of $F$ with disks and extending the pin structure $p(c)$ to a pin structure on all of $\overline{F}$, which we again denote by $p(c)$.

We now discuss how to obtain the invariant $\beta(L)$ from section \ref{sec:brown} in the language of pin structures.  We first consider a more general situation following \cite{kirby1990pin}, which we recommend for additional details. Let $M^3$ be an arbitrary  3-manifold with a spin structure $s$ and let $L$ be a link in $M$ with $[L] = 0 \in H_1(M; \mathbb{Z}/2)$ (and thus $[L]$ is dual to $w_2(M)$).  A \emph{pin characterization} of $L$ in $M$ is, as before, a pin structure on $M-L$ that does not extend across any component of $L$.  As before, we have an analogously defined descend of structure map
$$
\mathcal{P}in^-\mathcal{C}har(M^3,L^1) \to \mathcal{P}in^-(L^1)
$$
and we call a characterization $c$ of $L$ in $M$ \emph{even} if the induced pin structure on each component of $L$ is the bounding pin structure.  

Given $(M, L, s, c)$ we obtain a class $\gamma \in H^1(M - L; \mathbb{Z}/2\mathbb{Z})$ which is the unique cohomology class that acts on $c$ to obtain $s|_{M-L}$.  Let $E$ be the total space of the open disk bundle of $L$ in $M$ and let $S$ be the total space of the circle bundle of $E$.  A \emph{set of longitudes} for $L$ is a choice of parallel push-off of each component of $L$ in $M$.  We will call such a set of longitudes $l$ \emph{even} in $(M,s,c)$ if the dual of the class $\gamma$ (thought of in the cohomology of the complement of unit disk bundles of $L$) is represented by a surface $F^2$ that intersects $S$ in $l$.  If $l$ is an even set of longitudes and $l'$ is another set of longitudes, then $l'$ is even if and only if each component of $l'$ differs from the corresponding component of $l$ by an even number of twists. 

There is then an invariant 
$$
\beta(L,s,c,l) \in \mathbb{Z}/8\mathbb{Z}
$$
which is obtained as follows.  There exists an embedded surface $S^2$ in $M$ with $\partial S = L$ so that $S$ is dual to the element $\gamma$ and so that the longitudes of $L$ induced by $S$ are $l$.  The surface $F$ then obtains a pin structure $p(s)$ from the spin structure $s$ as is explained on pages 233-234 of $\cite{kirby1990pin}$ and this pin structure has the property that, when restricted to the boundary components, it yields the bounding pin structure.  We then take
$$
\beta(L,s,c,l) := \beta(\overline{S}, p(s))
$$
where $\overline{S}$ is the result of capping off all of the boundary components of $S$ with disks and extending the pin structure $p(s)$ to a pin structure on all of $\overline{S}$, which we again denote by $p(s)$.

Let $F^2$ be a surface properly embedded in a 4-manifold and let $l$ be a set of longitudes for $\partial F$.  Let $F \cdot_l F$ denote the self-intersection of $F$ where we use a push-off of $F$ that extends the push-off along $\partial F$ given by $l$.  In this notation, the self-intersection considered in the previous relative-Rochlin-type theorems were always $F \cdot_0 F$, where $0$ denotes the 0-framing.   The following is analogous to Lemma \ref{3d-4d-brown} and will be used to go between 3-dimensional and 4-dimensional invariants.  

\begin{lemma} \label{lemma:final3d4d}
	Let $M^3$ be a compact 3-manifold with spin structure $s$.  Let $L$ be a link in $M$ with $[L] = 0 \in H_1(M; \mathbb{Z}/2\mathbb{Z})$ and let $c$ be an even characterization of $L$ in $M$ and let $l$ be a choice of even longitudes for $L$ in $M$.  Let $F$ be a compact connected characteristic surface in $M \times I$ where we denote the boundary components of $M \times I$ by $M_0$ and $M_1$ and where $\partial F \subset M_0$. There is a unique characterization $C$ of $F$ in $L$ such that $C|_{M_0} = c$ and using the induced pin structure on $F$ from $C$ has the property that all of the boundary components have the bounding pin structure.    Then
$$
	2\beta(L, s, c, l) = 2\beta(F, C) + F \cdot_l F \pmod{16}
$$
\end{lemma}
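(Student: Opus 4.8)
The plan is to follow the now-familiar ``cap off, use a closed result, compute by additivity, relate 4d to 3d'' strategy, exactly as in the proofs of Theorems \ref{main_thm}, \ref{brown_thm} and Lemma \ref{3d-4d-brown}, but carrying the spin/pin structures along at every step. First I would reduce the general statement to well-definedness plus the special case of a particular convenient surface. Concretely, the first step is to prove that for any two characteristic surfaces $F_1, F_2$ properly embedded in $M \times I$ with $\partial F_1 = \partial F_2 = L$ carrying compatible characterizations $C_1, C_2$ restricting to $c$ on $M_0$, the quantity $2\beta(F_i, C_i) + F_i \cdot_l F_i$ is independent of $i$ modulo $16$. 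This is done exactly as in Lemma \ref{3d-4d-brown}: glue a smooth even 4-manifold $W$ (with $H_1(W; \mathbb{Z}) = 0$) onto $M \times \{-1\}$ and $\overline{W}$ onto $M \times \{+1\}$ to obtain a closed oriented smooth $X$, form the closed characteristic surface $F = F_1 \cup_L F_2$, note $F \cdot F = F_1 \cdot_l F_1 + F_2 \cdot_l F_2$ and $\sigma(X) = \sigma(W) + \sigma(\overline{W}) = 0$ by Novikov additivity, and apply the closed Guillou--Manin theorem (Theorem \ref{gm}), or rather its pin-theoretic reformulation via $\beta(\cdot, p(c))$ that has already been set up in this section. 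The characterizations $C_1$ and $C_2$ glue to a characterization of $F$ in $X$ because both restrict to $c$ on the common $M_0$, and the Brown invariant of the glued surface with the glued characterization splits additively as $\beta(\overline{F_1}, p(C_1)) + \beta(\overline{F_2}, p(C_2))$ by the additivity of $\beta$ under connected sum of pin surfaces (the same additivity used in the alternative proof of Theorem \ref{main_thm}).

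With well-definedness in hand, the second step is to evaluate both sides on the most convenient choice of surface. On the 3-dimensional side, the invariant $\beta(L, s, c, l)$ is by its very definition $\beta(\overline{S}, p(s))$ for an embedded spanning surface $S$ for $L$ in $M$ dual to $\gamma$ inducing the longitudes $l$; so I would simply take that same $S$ and push its interior into $M \times I$ to get a properly embedded $S' \subset M \times I$ with $\partial S' = L$ and $S' \cdot_l S' = 0$ (the push-in of a surface sitting in a level set has vanishing self-intersection relative to the framing it carries, which is $l$). The content then reduces to showing that the pin structure $p(C)$ induced on $\overline{S'}$ by the unique compatible characterization $C$ of $S'$ in $M \times I$ agrees with the pin structure $p(s)$ induced on $\overline{S}$ directly from the spin structure $s$ on $M$ via the Kirby--Taylor descent-of-structure construction (pages 233--234 of \cite{kirby1990pin}). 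Granting this, $2\beta(F, C) = 2\beta(\overline{S'}, p(C)) = 2\beta(\overline{S}, p(s)) = 2\beta(L, s, c, l)$ and $F \cdot_l F = 0$, so the congruence holds for $S'$, hence for all $F$.

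The main obstacle is precisely that last identification: that the pin structure obtained by (a) regarding $S$ inside $M$, using $s$ to get a pin structure on the total space of the circle bundle of $S$ and descending to $S$, versus (b) pushing $S$ into $M \times I$, taking the unique characterization $C$ of the pushed-in surface restricting to $c$ on $M_0$, and descending via the 4-dimensional map \eqref{eq:pin_descend}, give the same pin structure on $S$. I expect this to follow by a local/collar analysis near $M_0 = M \times \{0\}$: the characterization $C$ restricted to a collar $M \times [0, \varepsilon)$ minus the pushed-in surface is determined by $c$ together with $\gamma$, and the normal bundle splitting used in the 4-dimensional descent, when restricted along the time-slice annulus between $S'$ and its level-set copy, matches the circle-bundle data used in the 3-dimensional descent (this is the pin-structure analogue of the framing computation $\mathcal{O}(D) = 0$ in the proof of Lemma \ref{3d-4d} and of the verification $q_s = q_F$ in Section \ref{sec:spin}). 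I would also need the routine checks that $C$ exists and is unique with the stated boundary behavior, and that it does indeed induce bounding pin structures on all boundary components --- this last point is where the hypotheses that $c$ is even and $l$ is even get used, mirroring the role of properness in the earlier sections. Finally, for the ``all $F$'' statement with a general $F$ rather than a level push-in, well-definedness from the first step does the rest, after checking that the characterization $C$ of the general $F$ restricting to $c$ on $M_0$ exists and is unique (freeness and transitivity of the $H^1(M \times I - F; \mathbb{Z}/2\mathbb{Z})$-action together with the vanishing of the relevant relative cohomology forces this, as in the closed case).
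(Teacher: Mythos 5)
Your proposal takes a genuinely different route from the paper's, and it has one unfilled gap at its crux. The paper does not argue via ``well-definedness plus a special surface''; instead it isotopes $F$ so that $F \cap (M \times [0,1/4]) = L \times [0,1/4]$, lets $l'$ be the longitudes coming from a normal section of the remaining piece $F'$, and then quotes Kirby--Taylor directly: their Theorem 8.3 converts $\beta(L,s,c,l)$ into $\beta(L,s,c,l')$ at the cost of $r$, where $2r = F\cdot_l F$ is the total twisting between $l$ and $l'$, and their Theorem 8.2 is precisely the identity $\beta(L,s,c,l') = \beta(F',C')$. Your route instead absorbs the longitude dependence into the invariance of $2\beta(F,C)+F\cdot_l F$ under change of surface (proved by doubling across $M\times I$ and applying the closed theorem, as in Lemma \ref{3d-4d-brown}) and then evaluates on the push-in $S'$ of the defining surface $S$, for which $S'\cdot_l S'=0$. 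That organization is legitimate, but note two points of care your sketch elides: for a general $M$ (not a homology sphere) the doubling step requires choosing the spin filling $W$ to bound the spin structure $C_1|_{M_1}$, checking this agrees with $C_2|_{M_{-1}}$ so the same $W$ caps both ends, and tracking the sign reversals of $\beta$ and of the self-intersection on the orientation-reversed half; and the additivity you need is orthogonal-direct-sum additivity of the Brown invariant, not connected-sum additivity, since $F_1\cup_L F_2$ is glued along every component of $L$.

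The genuine gap is the step you yourself call ``the main obstacle'': identifying the pin structure $p(C)$ that the 4-dimensional descent puts on $\overline{S'}$ with the pin structure $p(s)$ that the 3-dimensional Kirby--Taylor construction puts on $\overline{S}$. The proposal only states an expectation that a collar analysis will give this; nothing is actually proved, and this is not a formality --- it is exactly the content of Theorem 8.2 of \cite{kirby1990pin} in the case of the surface framing, i.e.\ the result the paper's entire proof rests on. As written, the one computation your argument actually needs is the one it defers. The repair is either to cite Kirby--Taylor Theorem 8.2 at that point --- after which your first step becomes superfluous, since that theorem applies to any $F'$ admitting a normal section, which is how the paper proceeds --- or to carry out in detail the comparison of the two descent-of-structure maps on a collar of $M_0$, a real argument in the spirit of the $q_s = q_F$ verification of Section \ref{sec:spin} but for $\Pin^-$ structures and $\mathbb{Z}/4\mathbb{Z}$-enhancements.
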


\begin{proof}

Using the free and transitive action of $H^1(M \times I; \mathbb{Z}/2\mathbb{Z})$ on the set of characterizations of $F$ in $M \times I$, we see that there is a unique such characterization $C$ with $C|_{M_0} = c$, from which it immediately follows that the induced pin structure on $F$ from $C$ has the property that all of the boundary components have the bounding pin structure.

	Isotope $F$ so that $F \cap M \times [0,1/4] = L \times [0,1/4]$ and let $F' = F \cap M \times [1/4, 1]$.  Then the normal bundle of $F'$ in $M \times [1/4, 1]$ admits a section and let $l'$ denote the even longitudes of $\partial F' \subset M_{1/4} := M \times \{ 1/4 \}$.  The longitudes $l'$ on $L$ differ from the longitudes $l$ on $L$ by a total of $2r$ right handed twists (summing over the components of $L$) for some integer $r$.  By Theorem 8.3 of \cite{kirby1990pin}, we have 
	$$
	\beta(L, s, c, l') = \beta(L, s, c, l) + r \pmod{8}
	$$
	By Theorem 8.2 of \cite{kirby1990pin} applied to $M \times [1/4, 1]$ and $F'$ (note the missing factors of $2$ in front of all of the $\beta$ terms), we have 
$$
	\beta(L, s,c,l') = \beta(F', C') \pmod{8}
$$
	where here $C'$ is $C$ restricted to $M \times [1/4, 1]$.  Note that $2r = F \cdot_l F$ where all of the self intersections can be assumed to take place within $M \times [0,1/4]$ due to changing of the framings.  The result then follows.  
\end{proof}

\begin{lemma} \label{lemma:pinnified}
Let $\Sigma^3$ be an integral homology sphere and let $L$ be link in $\Sigma$.  Let $s$ is the unique spin structure on $\Sigma$ and let $c$ is the unique characterization of $L$ in $\Sigma$.  Then $L$ is a characteristic link if and only if $c$ is even.  Let $0$ denotes the 0 push-off of each component of $L$, then $0$ is an even set of longitudes for $L$ with respect to $s$ and $c$.  If $L$ is characteristic, then
$$
	\beta(L) = \beta(L, s, c, 0) \pmod 8
$$
	where the invariant on the left is the Brown invariant of $L$ as defined in section \ref{sec:brown}. 

\end{lemma}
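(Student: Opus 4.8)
The plan is to reduce all three assertions to a single enhancement-matching statement, in close parallel with the spin-theoretic reinterpretation of Section \ref{sec:spin}. Since $\Sigma$ is an integral homology sphere, $H^1(\Sigma;\mathbb{Z}/2\mathbb{Z}) = 0$ and $[L] = 0$, so $s$ and $c$ exist and are unique; moreover, any embedded (possibly nonorientable) surface $S$ with $\partial S = L$ is automatically dual to the class $\gamma \in H^1(\Sigma-L;\mathbb{Z}/2\mathbb{Z})$ cut out by $c$, because $c$ fails to extend over each component of $L$ while $s|_{\Sigma - L}$ does, so $\gamma$ evaluates to $1$ on every meridian, which is exactly what the Lefschetz dual of a spanning surface does. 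Following pages 233--234 of \cite{kirby1990pin}, the spin structure $s$ restricted to such an $S$ yields a $\Pin^-$ structure $p(s)$ on $S$: concretely $T\Sigma|_S = TS\oplus\nu$ with $w_1(\nu) = w_1(S)$, so $\nu\cong\det TS$, and a spin structure on $TS\oplus\det TS$ is the same datum as a $\Pin^-$ structure on $TS$. The key identity I would establish is
$$
e_{p(s)} = e_S \quad\text{on } H_1(S;\mathbb{Z}/2\mathbb{Z}),
$$
where $e_S$ is the $\mathbb{Z}/4\mathbb{Z}$-enhancement of Section \ref{sec:brown} (band framings $h(B')$ plus double points) and $e_{p(s)}$ is the $\Pin^-$ enhancement of Section \ref{sec:pin}. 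Both refine the intersection form, so it suffices to check agreement on a basis of embedded simple closed curves $C$; there $e_S([C]) = h(B')$ for $B$ a regular neighborhood band of $C$ in $S$, and by the Kirby--Taylor construction of $p(s)$ this resolved-band framing in $\Sigma$ is precisely what $p(s)$ records, so $e_{p(s)}([C]) = h(B')$ as well. This is the $\Pin^-$ analogue of the identities $q_s = q_F$ in Section \ref{sec:spin} and $e_p = e_F$ on page 221 of \cite{kirby1990pin}.

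Granting the identity, the properness assertion is immediate: for a boundary component $L_i = \partial_iS$ of $S$, the computation in Section \ref{sec:brown} gives $e_S([L_i]) = 2\,\lk(L_i,L-L_i)$, so the $\Pin^-$ structure that $c$ descends onto $L_i$ (which agrees with $p(s)|_{\partial_iS}$ by the compatibility of the two descent constructions in \cite{kirby1990pin}) is the bounding structure exactly when $\lk(L_i,L-L_i)$ is even; hence $c$ is even if and only if every such linking number is even, i.e. if and only if $L$ is proper ($=$ characteristic). For the statement that $0$ is an even set of longitudes -- which, as the case of the Hopf link shows, must be read under the properness hypothesis -- orient $L$ and take a Seifert surface $S_0$: it is dual to $\gamma$, so its induced longitudes form an even set, and on each component $L_i$ the $0$-framing differs from the Seifert framing $\lk(L_i,L-L_i)$ by an even number of twists (by properness), so the criterion recalled earlier in Section \ref{sec:pin} (two sets of longitudes are simultaneously even iff they differ by an even number of twists on each component) shows the $0$-framing set is again even; in particular it is realized by some spanning surface.

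It remains to prove $\beta(L) = \beta(L,s,c,0)$ when $L$ is proper. Pick a spanning surface $S$ inducing the $0$-framings on all components, as just produced. Then $\phi(S) = \tfrac12\sum_i \lk(L_i,L_i^+) = 0$, so by the definition in Section \ref{sec:brown}, $\beta(L) = \beta(S) = \beta\big(H_1(S;\mathbb{Z}/2\mathbb{Z}),\cdot,e_S\big)$. Since $L$ is proper, $e_S$ vanishes on the radical $R = \im\big(H_1(\partial S;\mathbb{Z}/2\mathbb{Z})\to H_1(S;\mathbb{Z}/2\mathbb{Z})\big)$, so this Brown invariant equals $\beta\big(H_1(S;\mathbb{Z}/2\mathbb{Z})/R,\cdot,\bar e_S\big)$. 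Capping off $\partial S$ with disks kills precisely $R$, giving $H_1(\overline S;\mathbb{Z}/2\mathbb{Z}) = H_1(S;\mathbb{Z}/2\mathbb{Z})/R$ with the same intersection form, and by the key identity the descended enhancement $\bar e_S$ on this quotient is $e_{p(s)}$ on $\overline S$. Hence
$$
\beta(L) = \beta\big(H_1(\overline S;\mathbb{Z}/2\mathbb{Z}),\cdot,e_{p(s)}\big) = \beta(\overline S, p(s)) = \beta(L,s,c,0),
$$
using the definition of $\beta(L,s,c,0)$ from Section \ref{sec:pin} together with the fact that $p(s)|_{\partial S}$ is bounding (which follows from the identity and properness, so that capping off is legitimate). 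The main obstacle is the enhancement identity $e_{p(s)} = e_S$: it requires unwinding the construction of $p(s)$ on pages 233--234 of \cite{kirby1990pin} and checking that the $\Pin^-$ enhancement it produces is computed by band framings in $\Sigma$, with the correct normalization in the M\"obius-band (orientation-reversing) case; everything after that is bookkeeping with radicals and framings.
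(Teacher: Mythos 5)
Your proof is correct in outline but takes a genuinely different route from the paper's. You work entirely in three dimensions: you fix a single spanning surface $S$ for $L$ in $\Sigma$, observe it is automatically dual to $\gamma$, and reduce all three assertions to the single enhancement identity $e_{p(s)} = e_S$ between the Kirby--Taylor pin enhancement and the band-framing enhancement of Section \ref{sec:brown}; you also correctly note that the ``$0$ is an even set of longitudes'' clause must be read under the properness hypothesis, and you supply arguments for the first two assertions that the paper's proof does not spell out at all. The paper instead routes through four dimensions: it takes a spanning surface $F$ pushed into $\Sigma \times I$, applies Lemma \ref{3d-4d-brown} to get $2\beta(L) = 2\beta(F) + F\cdot_0 F$, applies Lemma \ref{lemma:final3d4d} to get $2\beta(L,s,c,0) = 2\beta(F,C) + F\cdot_0 F$, and then quotes $\beta(F) = \beta(F,C)$, i.e.\ the \emph{four}-dimensional enhancement identity $e_p = e_F$ already available from page 221 of \cite{kirby1990pin} and discussed in Section \ref{sec:pin}. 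What that detour buys is precisely that the only enhancement comparison needed is one already in the literature, whereas the crux of your argument --- that the pin structure on $S$ obtained from $s$ via $TS \oplus \det TS$ has enhancement computed by resolved-band framings $h(B')$, with the correct normalization on M\"obius bands --- is asserted rather than proved, as you yourself flag. That identity is true (it is essentially the content of Kirby--Taylor's Section 8), but as written it is the one load-bearing step of your argument that still has to be carried out, together with the compatibility you invoke between the descent of $c$ onto $L$ and the restriction $p(s)|_{\partial S}$, which is what makes your ``characteristic iff even'' step close. Once those are checked on an embedded curve with annular neighborhood and one with a M\"obius neighborhood, your remaining bookkeeping with radicals, $\phi(S)=0$, and capping off is sound.
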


\begin{proof}
	For $L$ characteristic, it follows from Lemma \ref{lemma:final3d4d} that
$$
	2\beta(L, s, c, 0) = 2\beta(F, C) + F \cdot_0 F \pmod{16}
$$
	and from Lemma \ref{3d-4d-brown} we have 
$$
	2\beta(L) = 2\beta(F) + F \cdot_0 F \pmod{16}
$$ 
	From the proceeding discussion of how our pin structure framework generalizes the results in section \ref{sec:brown}, we know that 
	$$
	\beta(F) = \beta(F, C) \pmod{8}
	$$
and thus the result follows.  
\end{proof}

Following the alternative proof of Theorem \ref{main_thm} given in the previous section using spin structures, Theorem \ref{brown_thm} can be derived for $X^4$ smooth in the language of pin structures by using Lemma \ref{lemma:pinnified} to identify $\beta(L)$ with an invariant related to pin structures together with Lemma \ref{lemma:final3d4d}. 

We need the following topological version of Theorem \ref{gm} due to Kirby and Taylor \cite{kirby1990pin} (which we already used in the proof of Theorem \ref{brown_thm}):

\begin{theorem}  \label{final_closed} (Kirby and Taylor)
	Let $X^4$ be a closed oriented  4-manifold and let $F^2$ be a characteristic surface in $X$.  Let $C$ be a pin characterization of $F$ in $X$ .  Then
	$$
	2 \cdot \beta(F,C) = F \cdot F - \sigma(X) + 8 \KS(X) \pmod{16}
	$$
\end{theorem}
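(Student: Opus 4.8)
The plan is to regard Theorem~\ref{final_closed} as the topological refinement of the Guillou--Manin theorem (Theorem~\ref{gm}) and to bootstrap it from results already in hand: the smooth statement (Theorem~\ref{gm}), the Freedman--Kirby formula for the Kirby--Siebenmann term of an orientable characteristic class (Theorem~\ref{thm:closed_top}), Freedman--Quinn stable smoothing, and the existence of the Chern manifold ${\ast}\mathbb{CP}^2$ from Freedman's classification. First I would reduce to the case $H_1(X;\mathbb{Z})=0$, exactly as $\beta(F)$ is extended to non-simply-connected manifolds in Section~\ref{sec:brown}: surger a $1$--dimensional link disjoint from $F$ to kill $H_1$ and pass to the surgered manifold together with its (now unique) pin characterization. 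The surgery glues along the smooth $S^1\times S^2$, so $\KS$ is unchanged by its additivity, while $\sigma$ and $F\cdot F$ are unchanged because $X$ and the surgered manifold are cobordant and $F$ keeps its normal bundle. From here on $H_1(X;\mathbb{Z})=0$ and $C$ is the unique characterization.

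Next I would split on $\KS(X)$. Suppose $\KS(X)=0$. By Freedman--Quinn, $X\#k(S^2\times S^2)$ admits a smooth structure for some $k$; since $w_2$, and hence the characteristic property of $F$ (placed in the $X$ summand), is unchanged, $\sigma$ and $F\cdot F$ are unchanged, $H_1$ stays trivial, and the unique characterization persists ($H^1(S^2\times S^2;\mathbb{Z}/2\mathbb{Z})=0$ forces a unique extension), Theorem~\ref{gm} applies in the smooth manifold and gives $2\beta(F,C)=\sigma(X)-F\cdot F\pmod{16}$. This agrees with the desired $F\cdot F-\sigma(X)+8\KS(X)=F\cdot F-\sigma(X)$, since the difference of the two right-hand sides is $2(\sigma(X)-F\cdot F)$, which vanishes mod $16$ because $\sigma(X)\equiv F\cdot F\pmod 8$ for the characteristic class $[F]$. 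This disposes of the case $\KS(X)=0$.

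Now suppose $\KS(X)=1$. Let ${\ast}\mathbb{CP}^2$ be the Chern manifold, with $\KS({\ast}\mathbb{CP}^2)=1$, $\sigma({\ast}\mathbb{CP}^2)=1$, and characteristic class $h$ with $h\cdot h=1$. Form $Y=X\#{\ast}\mathbb{CP}^2$, so $\KS(Y)=0$ and $\sigma(Y)=\sigma(X)+1$. Represent $h$ by an \emph{orientable} embedded surface $F_0\subset{\ast}\mathbb{CP}^2$ and choose a pin characterization $C_0$ of $F_0$; then $F'=F\sqcup F_0\subset Y$ is characteristic, with characterization $C'=C\cup C_0$ glued along the $S^3$ of the connected sum (which carries a unique pin structure). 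By additivity of the Brown invariant, $\beta(F',C')=\beta(F,C)+\beta(F_0,C_0)$, and $F'\cdot F'=F\cdot F+1$. Applying the already-settled $\KS=0$ case to $(Y,F',C')$ gives $2\beta(F,C)+2\beta(F_0,C_0)=F\cdot F-\sigma(X)\pmod{16}$. Because $F_0$ is orientable, $C_0$ descends to a spin structure $s(C_0)$ and $\beta(F_0,C_0)=4\Arf(F_0,s(C_0))=4\Arf(h)$, where (as recorded in Sections~\ref{sec:spin} and~\ref{sec:pin}) the first equality is $e_s=2q_s$ and the second identifies $\Arf(F_0,s(C_0))$ with the homology-class invariant $\Arf(h)$. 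Theorem~\ref{thm:closed_top} evaluates $\Arf(h)=\KS({\ast}\mathbb{CP}^2)+(\sigma({\ast}\mathbb{CP}^2)-h\cdot h)/8=1+0=1$, so $\beta(F_0,C_0)=4$. Substituting, $2\beta(F,C)=F\cdot F-\sigma(X)-8\equiv F\cdot F-\sigma(X)+8\pmod{16}$, which is the claim since $8\KS(X)=8$.

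The main obstacle is the last computation, $\beta(F_0,C_0)=4$ for the characteristic surface of the Chern manifold: this is precisely where the Kirby--Siebenmann invariant enters the formula, and it relies on Theorem~\ref{thm:closed_top} --- equivalently, on the fact that the characteristic class of ${\ast}\mathbb{CP}^2$ is \emph{not} carried by an embedded sphere (so $F_0$ must have positive genus and nonzero Arf invariant). The remaining ingredients are routine but need care: that pin characterizations behave correctly under surgery and connected sum and restrict as claimed, that $\beta$, $\sigma$, $F\cdot F$, and $\KS$ are additive for these operations, and that the identifications $\beta(F_0,C_0)=4\Arf(F_0,s(C_0))$ and $\Arf(F_0,s(C_0))=\Arf(h)$ used above are the ones already established. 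This route bootstraps from Theorems~\ref{gm} and~\ref{thm:closed_top} and from Freedman's classification; Kirby and Taylor's original argument in \cite{kirby1990pin} instead computes the bordism group of closed $4$--manifolds equipped with a characteristic surface and a pin characterization and evaluates both sides of the formula on its generators.
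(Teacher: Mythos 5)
The paper does not actually prove this statement --- it imports it from Kirby--Taylor \cite{kirby1990pin}, whose argument computes the bordism group of closed $4$-manifolds equipped with a characterized surface and evaluates both sides on generators. Your bootstrapping route (reduce to $H_1=0$, smooth the ambient manifold by $S^2\times S^2$-stabilization when $\KS=0$ and quote Theorem \ref{gm}, then split off a Chern manifold $*\mathbb{C}P^2$ whose characteristic surface contributes $\beta=4$ via Theorem \ref{thm:closed_top}) is genuinely different and structurally appealing, but the $\KS=0$ step, on which everything else rests, contains a concrete error. Theorem \ref{gm} gives $2\beta(F)=\sigma(X)-F\cdot F\pmod{16}$, whereas the target is $2\beta(F,C)=F\cdot F-\sigma(X)\pmod{16}$, and you reconcile the two by asserting $\sigma(X)\equiv F\cdot F\pmod 8$. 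That congruence is van der Blij's lemma and holds only when $F$ is orientable, so that $[F]$ is an integral characteristic class; for nonorientable characteristic surfaces --- the only case where Theorem \ref{final_closed} says more than Theorem \ref{thm:closed_top} --- it fails. The standard $\mathbb{R}P^2\subset S^4$ is characteristic with $\sigma=0$ and $F\cdot F=\mp2$, and Guillou--Manin forces $\sigma-F\cdot F\equiv 2\beta\pmod{16}$ with $\beta=\pm1$ odd, so $2(\sigma-F\cdot F)\equiv\pm4\not\equiv 0\pmod{16}$: the two right-hand sides genuinely differ. What your argument proves in the $\KS=0$ case is $2\beta(F,C)=\sigma(X)-F\cdot F$, i.e.\ the Guillou--Manin sign, not the displayed one. (The root cause is a sign-convention mismatch inside the paper itself: the proof of Theorem \ref{brown_thm} describes the Kirby--Taylor result as Theorem \ref{gm} with $8\KS$ added, which carries the opposite sign from Theorem \ref{final_closed} as displayed. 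But that mismatch cannot be absorbed by a congruence that is false for nonorientable $F$; you must fix one sign convention for $\beta$ and carry it through both statements.)

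A secondary gap: after stabilizing so that $X\# k(S^2\times S^2)$ is smooth, the surface $F$ is still only locally flat, while Theorem \ref{gm} is a smooth theorem. You need either to smooth $F$ itself (possible after further stabilization, by Freedman--Quinn, but this must be invoked) or to argue that $\beta(F,C)$ and $F\cdot F$ computed for a locally flat surface agree with those of a smooth representative. The remaining architecture --- the surgery reduction to $H_1=0$, additivity of $\beta$, $\sigma$, $F\cdot F$ and $\KS$ under connected sum, and the evaluation $\beta(F_0,C_0)=4\Arf(h)=4$ on the Chern manifold, which correctly isolates Theorem \ref{thm:closed_top} as the source of the $8\KS$ term --- is sound once the sign issue is repaired.
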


Let $M^3$ be a compact 3-manifold with a choice of spin structure $s$.  Then the Rochlin invariant of $(M,s)$, denoted $\mu(M,s)$, is defined as 
$$
\mu(M,s) := \sigma(W) \pmod{16} 
$$
where $W^4$ is a smooth compact 4-manifold with a spin structure that extends the given spin structure on $M$.  By Rochlin's theorem, this is well defined.  

The following theorem generalizes all of the previously given theorems.  The main results in the previous sections could have all been proved for $F$ and $\partial X$ not necessarily connected, however, for simplicity, we chose not to do this.  In the following theorem, we do not make these restrictions.  This result yields a ``combinatorial'' formula for $\KS(X^4)$ for $X$ orientable, compact, and with nonempty boundary (see Theorem \ref{thm:closed_top} for the closed case).  The proof strategy is the same as usual: cap off the boundary components appropriately, apply the relevant formula for closed manifolds, and then use various additivity results together with a lemma (in this case Lemma \ref{lemma:final3d4d}) to relate a 4-dimensional invariant to a 3-dimensional invariant. 

\begin{theorem} \label{thm:pin}
	Let $X^4$ be a compact oriented topological 4-manifold.  Let $F^2$ be a characteristic not necessarily orientable surface that is properly embedded in $X$ such that $\partial F$ is a proper link  representing the 0 element in $H_1(M; \mathbb{Z}/2\mathbb{Z})$ in each of the components $M^3$ of $\partial X$.  For each boundary component $M$ of  $\partial X$, let $s_M$ be a choice of spin structure on $M$.  Let $C$ be a choice of even characterization of $F$ in $X$ and let $l$ be a choice of even longitudes for $F$ with respect to the spin structures $s_M$ and $C|_{M}$.   Then
	$$
	2 [\beta(F,C) + \sum_{M} \beta(\partial F \cap M, s_M, C|_{M}, l \cap M)] = \sigma(X) - F \cdot_l F + 8 \KS(X) + \sum_{M} \mu(M, s_M)  \pmod{16}
	$$
where the sums are over the boundary components $M$ of $\partial X$.  
\end{theorem}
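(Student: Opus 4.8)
The plan is to run the paper's standard strategy: cap off every boundary component of $X$ to reduce to the closed formula (Theorem \ref{gm}, or Theorem \ref{final_closed} in the topological category), then use additivity of $\beta$, $\sigma$, $\KS$ and the self-intersection form together with Lemma \ref{lemma:final3d4d} to account for the correction terms. Every invariant in the statement is additive over disjoint unions and there is already a sum over components in the conclusion, so I would handle all components of $\partial X$ simultaneously. For each component $M$ of $\partial X$ I would choose a smooth compact spin $4$-manifold $(W_M, t_M)$ with $\partial(W_M, t_M) = (\overline M, s_M)$; such a $W_M$ exists because $\Omega_3^{\Spin} = 0$, and by the definition of $\mu$ recalled just above, $\sigma(W_M) \equiv \mu(M, s_M) \pmod{16}$. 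Using the machinery of \cite{kirby1990pin} underlying $\beta(\partial F \cap M, s_M, C|_M, l \cap M)$, I would choose an embedded surface $S_M \subset M$ with $\partial S_M = \partial F \cap M$, dual in $M - \partial F$ to the class $\gamma_M$ determined by $(s_M, C|_M)$, and inducing the prescribed even longitudes $l \cap M$; then push the interior of $S_M$ into a product collar $\partial W_M \times I \subset W_M$ so that near $\partial W_M$ it is $(\partial F \cap M) \times I$. Gluing yields a closed oriented $4$-manifold $\hat X := X \cup_{\partial X} \bigsqcup_M W_M$ and a closed surface $\hat F := F \cup_{\partial F} \bigsqcup_M S_M$ inside it.

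Next I would assemble the pin characterization. The pin characterization $C$ of $F$ in $X$ restricts to $C|_M$ on each $M$; on $W_M$, the spin structure $t_M$ twisted by $\gamma_M$ (extended over the collar) is a pin characterization $\hat c_M$ of $S_M$ in $W_M$ restricting to $C|_M$ over $\partial W_M$ and inducing on $S_M$ precisely the pin structure used to define $\beta(\partial F \cap M, s_M, C|_M, l \cap M)$. Because $C$ and the various $\hat c_M$ agree over $\partial X$, they glue to a pin structure $\hat C$ on $\hat X - \hat F$ that does not extend over any component of $\hat F$; in particular $\hat F$ is characteristic in $\hat X$ and $\hat C$ is a pin characterization of $\hat F$, restricting on $F$ to $p(C)$ and on each $S_M$ to $p(s_M)$. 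This is the exact analogue of the gluing of spin characterizations in the alternative proof of Theorem \ref{main_thm}.

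Then I would carry out the four additivity computations and conclude. Additivity of $\KS$ and $\KS(W_M) = 0$ give $\KS(\hat X) = \KS(X)$; Novikov additivity and the choice of $W_M$ give $\sigma(\hat X) \equiv \sigma(X) + \sum_M \mu(M, s_M) \pmod{16}$; since each $S_M$ lies in a product collar with $S_M \cdot_{l \cap M} S_M = 0$ and is disjoint from $F$, while $\hat F$ agrees with $F$ along $\partial F$ with framing $l$, one gets $\hat F \cdot \hat F = F \cdot_l F$; and since $\hat C$ restricts to the bounding pin structure along $\partial F$ (as $C$ and each $\hat c_M$ are even), cutting $\hat F$ along $\partial F$ and capping the resulting boundary circles with disks exhibits $(\hat F, \hat C)$ as $\Pin^-$-bordant to $(\overline F, p(C)) \sqcup \bigsqcup_M (\overline{S_M}, p(s_M))$, so additivity of $\beta$ and the definitions of section \ref{sec:pin} give $\beta(\hat F, \hat C) = \beta(F, C) + \sum_M \beta(\partial F \cap M, s_M, C|_M, l \cap M)$; equivalently, this last identity is Lemma \ref{lemma:final3d4d} applied to each product collar $\partial W_M \times I$, where the self-intersection term drops out. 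Feeding these four facts into Theorem \ref{gm} for $(\hat X, \hat F)$ (respectively into Theorem \ref{final_closed} for $(\hat X, \hat F, \hat C)$ when $X$ is only topological, exactly as for Theorem \ref{brown_thm}) and rearranging produces the claimed congruence.

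The main obstacle is the compatibility bookkeeping hidden in the second and fourth steps: one must verify that pushing an $M$-surface into a collar of $\partial W_M$ carries the pin-structure descent constructions of \cite{kirby1990pin} onto the ones used to define $\beta(\partial F \cap M, s_M, C|_M, l \cap M)$, and that the even longitudes $l$ are exactly those realizable by such surfaces — which is what Lemma \ref{lemma:final3d4d} supplies in a product collar, so the real work lies in invoking \cite{kirby1990pin} with matching orientation and framing conventions (the same care is needed for the orientation in the definition of $\mu(M, s_M)$ and for the signs in the closed formula), rather than in any new geometric input.
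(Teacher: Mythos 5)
Your proposal follows the paper's proof essentially verbatim: cap each boundary component $M$ with a smooth spin $4$-manifold $W_M$ bounding $(\overline{M}, s_M)$, push a surface dual to $\gamma_M$ with boundary $\partial F \cap M$ and longitudes $l \cap M$ into a collar, glue the pin characterizations, and combine Theorem \ref{final_closed} with Novikov additivity, additivity of $\KS$ and $\beta$, and Lemma \ref{lemma:final3d4d}. The one discrepancy is a sign: with $\partial W_M = \overline{M}$ the paper records $\sigma(W_M) = \mu(M,-s_M) = -\mu(M,s_M) \pmod{16}$ rather than $+\mu(M,s_M)$ as you assert, so your orientation conventions need checking, but this does not change the structure of the argument.
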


\begin{proof}
	For each boundary component $M$ of $X$, let $W_M$ be a compact smooth 4-manifold with $\partial W_M = M$ and suppose that $W_M$ has a spin structure that bounds the inverse of the spin structure on $M$.  Let $F_M$ be a compact embedded surface in a collar boundary of $W_M$ obtained from isotoping a surface bounding $F \cap M$ in $M$ into $W_M$ (and thus $F_M$ is dual to $w_2(W_M)$) with $\partial F_M = \partial F$.  Then by definition
	\begin{align} \label{eq:final1}
	\sigma(W_M) = \mu(M, -s_M) = - \mu(M, s_M) \pmod{16}
	\end{align}
The characterization $C|_{M}$ extends to a characterization of $F_M$ in $W_M$ using Lemma \ref{lemma:final3d4d} together with the fact that $W_M$ is spin.  Let $C_M$ denote a choice of such an extension.

	Let $\overline{X}$ be closed manifold resulting from gluing all of the manifolds $W_M$ to $X$ and let $\overline{F}$ be the result of closed surface that is the union of $F$ and all of the surfaces $F_M$. Let $\overline{C}$ denote the characterization of $\overline{F}$ in $\overline{X}$ that is the union of the characterization $C$ and the characterizations $C_M$.  Then we have
	\begin{align*} 		
		2\beta(\overline{F}, \overline{C}) + \overline{F} \cdot \overline{F} &= 2\beta(F,C) + \sum_{M} 2\beta(F_M, C_M) + F \cdot_l F + \sum_{M} F_M \cdot_{l \cap M} F_M \\
										     &= 2 [\beta(F,C) + \sum_{M} \beta(\partial F \cap M, s_M, C|_{M}, l \cap M)] + F \cdot_l F \pmod{16} \numberthis \label{eq:final2}
	\end{align*}
	where the first equality follows by computing $F \cdot F$ using pushoffs extending the push-off given by $l$ together with the additivity of the Brown invariant and the fact that $\partial F$ is characteristic, and the second equality follows from Lemma \ref{lemma:final3d4d}.   
	Then by applying Theorem \ref{final_closed} to $\overline{X}$ and $\overline{F}$, together with Novikov additivity, equation \eqref{eq:final1}, and equation \eqref{eq:final2}, we obtain the desired result. 
\end{proof}

\bibliography{arf}

\begin{thebibliography}{FNOP20}

\bibitem[Arf41]{arf1941}
Cahit Arf.
\newblock {Untersuchungen {\"u}ber quadratische Formen in K{\"o}rpern der
  Charakteristik 2. (Teil I.).}
\newblock {\em Journal f{\"u}r die reine und angewandte Mathematik},
  1941(183):148--167, 1941.

\bibitem[AS68]{atiyah1968index}
Michael~F. Atiyah and Isadore~M. Singer.
\newblock The index of elliptic operators: {III}.
\newblock {\em Annals of mathematics}, pages 546--604, 1968.

\bibitem[CST12]{universal_form}
James Conant, Rob Schneiderman, and Peter Teichner.
\newblock Universal quadratic forms and {W}hitney tower intersection
  invariants.
\newblock {\em Geom. Topol. Monographs}, 18:35--60, 2012.

\bibitem[FK78]{freedman1978geometric}
Michael Freedman and Robion Kirby.
\newblock A geometric proof of {Rochlin's} theorem.
\newblock In {\em Proc. Sympos. Pure Math}, volume~32, pages 85--98, 1978.

\bibitem[FNOP20]{2020survey}
Stefan Friedl, Matthias Nagel, Patrick Orson, and Mark Powell.
\newblock A survey of the foundations of four-manifold theory in the
  topological category.
\newblock {\em arXiv e-prints}, page arXiv:1910.07372, July 2020.

\bibitem[GAn70]{gonzalez1970dehn}
Francisco Gonz{\'a}lez-Acu\~{n}a.
\newblock Dehn's construction on knots.
\newblock {\em Bol. Soc. Mat. Mexicana}, 15(2):58--79, 1970.

\bibitem[GM77]{guillou_manin}
Lucien Guillou and Alexis Manin.
\newblock Une extension d'un theoreme de {R}ohlin sur la signature.
\newblock {\em CR Acad. Sci. Paris}, 285:95--98, 1977.

\bibitem[Gor75]{gordon1975knots}
Cameron~McA Gordon.
\newblock Knots, homology spheres, and contractible 4-manifolds.
\newblock {\em Topology}, 14(2):151--172, 1975.

\bibitem[Hos84]{hoste1984arf}
Jim Hoste.
\newblock The {Arf} invariant of a totally proper link.
\newblock {\em Topology and its Applications}, 18(2-3):163--177, 1984.

\bibitem[Joh80]{johnson1980spin}
Dennis Johnson.
\newblock Spin structures and quadratic forms on surfaces.
\newblock {\em Journal of the London Mathematical Society}, 2(2):365--373,
  1980.

\bibitem[Kir89]{kirbybook}
Robion~C. Kirby.
\newblock {\em The topology of {$4$}-manifolds}, volume 1374 of {\em Lecture
  Notes in Mathematics}.
\newblock Springer-Verlag, Berlin, 1989.

\bibitem[KM61]{kervaire_milnor_spheres}
Michel~A. Kervaire and John~W. Milnor.
\newblock On 2-spheres in 4-manifolds.
\newblock {\em Proceedings of the National Academy of Sciences of the United
  States of America}, 47(10):1651, 1961.

\bibitem[KM04]{kirby_melvin}
Robion Kirby and Paul Melvin.
\newblock Local surgery formulas for quantum invariants and the {Arf}
  invariant.
\newblock {\em Geometry \& Topology Monographs}, 7:213--233, 2004.

\bibitem[KS77]{kirby1977foundational}
Robion~C. Kirby and Laurence~C. Siebenmann.
\newblock {\em Foundational essays on topological manifolds, smoothings, and
  triangulations}.
\newblock Princeton University Press, 1977.

\bibitem[KT90]{kirby1990pin}
Robion~C. Kirby and Laurence~R. Taylor.
\newblock Pin structures on low-dimensional manifolds.
\newblock Geometry of Low-Dimensional Manifolds, 2 ({D}urham, 1989)({London
  Mathematical Society Lecture Note Series} vol. 151), 1990.

\bibitem[Lic97]{lickorish1997introduction}
W.~B.~Raymond Lickorish.
\newblock {\em An introduction to knot theory}, volume 175 of {\em Graduate
  Texts in Mathematics}.
\newblock Springer-Verlag, New York, 1997.

\bibitem[Mat86]{matsumoto}
Yukio Matsumoto.
\newblock {An elementary proof of Rochlin's signature theorem and its extension
  by Guillou and Marin}.
\newblock {\em Prog. Math.}, pages 119--139, 1986.

\bibitem[MGL78]{mc1978signature}
Cameron Mc~Gordon and Richard~A Litherland.
\newblock On the signature of a link.
\newblock {\em Inventiones mathematicae}, 47(1):53--69, 1978.

\bibitem[Rob65]{robertello}
Raymond~A. Robertello.
\newblock An invariant of knot cobordism.
\newblock {\em Communications on Pure and Applied Mathematics}, 18(3):543--555,
  1965.

\bibitem[Roc52]{VR}
Vladamir~A. Rochlin.
\newblock New results in the theory of four-dimensional manifolds.
\newblock {\em Doklady Akad. Nauk SSSR (N.S.)}, 84:221--224, 1952.

\bibitem[Roc72]{rokhlin1972proof}
Vladimir~A. Rochlin.
\newblock Proof of {Gudkov's} hypothesis.
\newblock {\em Functional Analysis and its Applications}, 6(2):136--138, 1972.

\bibitem[Sav11]{saveliev_book}
Nikolai Saveliev.
\newblock {\em Lectures on the topology of 3-manifolds: an introduction to the
  Casson invariant}.
\newblock Walter de Gruyter, 2011.

\bibitem[Tay01]{taylor2001gauss}
Laurence~R Taylor.
\newblock Gauss sums in algebra and topology.
\newblock {\em {Preprint, available at
  http://www.nd.edu/~taylor/papers/Gauss\_sums.pdf}}, 2001.

\bibitem[Yas96]{yasuhara1996connecting}
Akira Yasuhara.
\newblock Connecting lemmas and representing homology classes of simply
  connected 4-manifolds.
\newblock {\em Tokyo Journal of Mathematics}, 19(1):245--261, 1996.

\end{thebibliography}
\bibliographystyle{alpha}

\end{document}